 \let\mathscr\relax
\newtheorem{theorem}{Theorem}[section]
\newtheorem{lemma}{Lemma}[section]
\newtheorem{re}{Remark}[section]
\newtheorem{nota}{Notation}[section]
\newtheorem{example}{Example}[section]
\providecommand{\keywords}[1]
{
	\small	
	\textbf{Keywords:} #1
}
\begin{document}

	\title{ Characteristic Polynomial of Power Graphs on Direct Product of Any Two Finite Cyclic Groups}
	
	\author{
		Komal Kumari and Pratima Panigrahi
		\\ \small Department of Mathematics, Indian Institute of Technology Kharagpur, India\\ \small e-mail: komalkumari1223w@gmail.com, pratima@maths.iitkgp.ac.in}
	
	
	\maketitle
	\begin{abstract}
		The power graph $\mathscr{P}(G)$ of a group $G$ is defined as the simple graph with vertex set  $G$, and where two distinct vertices $x$ and $y$ are joined by an edge if and only if either $x= y^k$ or $y= x^k$, $k \in \mathbb{N}$.
 Here we determine the characteristic polynomial of  $\mathscr{P}(\mathbb{Z}_m \times \mathbb{Z}_{n})$ for any positive integers $m$ and $n$.  Additionally, for some particular values of $m$ and $n$, we simplify the above characteristic polynomials and provide the full spectrum in a few cases.
	\end{abstract}
	
	\keywords{Power graph, Adjacency Matrix, Direct Product, Characteristic Polynomial, Quotient Matrix}
	
	\textbf{Mathematical Classification Code :}{05C75, 05C50, 05C25}
	
	\section{Introduction}
	All graphs considered in this article are simple and undirected. A graph $H$ is  typically denoted as $H=(V(H), E(H))$, where $V(H)$ is the vertex set and $E(H)$ is the edge set of $H$. For $u,$ $v \in V(H)$, by  the notation $u \sim v$, we mean $u$ and $v$  are adjacent. For $S_1,S_2 \subseteq V(H)$, $S_1 \cap S_2= \emptyset$, $S_1 \sim S_2$ means all the vertices in $S_1$ are adjacent with all the vertices in $S_2$. For $u \in V(H)$, $N(u)$ denotes the set of all neighbours of $u$. An \textit{r-regular graph} is the one in which every vertex is of degree $r$. The \textit{complete graph} $K_n$ on $n$ vertices, is an \textit{(n-1)-regular graph}.  The characteristic polynomial of a square matrix $N$ is defined by $det(xI-N)$, which we  represent as $\psi(N, x)$. For any simple graph $H$ with $n$ vertices, the adjacency matrix $A(H) =(a_{ij})$ is defined as an $n \times n$ matrix, where $a_{ij} =1$ if the $i^{th}$ vertex is adjacent to $j^{th}$ vertex, and zero otherwise. The \textit{spectrum} of $H$ is the multiset of all eigenvalues of $A(H)$ and is  denoted by  $\begin{pmatrix}
		\lambda_1 & \lambda_2 & \ldots & \lambda_r \\
		m_1 & m_2 &\ldots & m_r
	\end{pmatrix} $ where $ \lambda_1, \lambda_2, \ldots , \lambda_r$ are distinct eigenvalues of $A(H)$ with multiplicities $m_1,m_2, \ldots, m_r$ respectively. Kronecker product of two matrices $F$ and $N$, denoted by $F \bigotimes N$, is the block matrix  $[f_{ij} N]$ where $F= (f_{ij})$.\\ 
	
	A partition $V_1 \cup V_2 \cup \ldots \cup V_m$ of $V(H)$ of a graph $H$ is known as an \textit{equitable partition} if for all $i,j,1\leq i,j \leq m$, and  all $u,v \in V_i$, $|N(u) \cap V_j|=|N(v) \cap V_j| $ . For $v \in V_i$, we take  $q_{ij}=|N(v) \cap V_j|$, $1 \leq i,j  \leq m$, and $Q=(q_{ij})_{m \times m}$. The matrix $Q$ is called the \textit{quotient matrix} of the partition $V_1 \cup V_2 \cup \ldots \cup V_m $.  The $join$  $H_1 \vee H_2$ of graphs $H_1$ and $H_2$  is the graph obtained from the disjoint union of $H_1$ and $H_2$ by  connecting every vertex in $H_1$ to every vertex in $H_2$ by an edge. For an $n-$vertex  graph $H$ with vertex set $V(H) =\{1,2,...,n\}$ and a family of vertex disjoint graphs $H_1, H_2,...,H_n$,  the $H- generalized~join$ of  $H_1, H_2,..., H_n$  \cite{cardoso2013spectra}, denoted as $\tilde{G} = \bigvee_H \{ H_1, H_2, ...,H_n \}$, is a graph with vertex set $V(\tilde{G}) = \bigcup_{i=1}^{n}V(H_i)$  and edge set
	\begin{equation*}
		E(\tilde{G})=\left(\bigcup_{i=1}^{n}E(H_i)\right) \bigcup \left(\bigcup_{ij\in E(H)}\{uv: u\in V(H_i), v\in V(H_j)\}\right)
	\end{equation*} 
	If $n_i$ is the order of $H_i$, $1 \leq i \leq n$, and $v_{l_i} \in V(H_i)$  then $deg_{\tilde{G} (v_{l_i})} = deg_{H_i} (v_{l_i}) + \sum_{ij\in E(H)}n_j $. If $H=K_2$, then $H- generalized~join$ coincides with join of two graphs.\\
	\begin{theorem}[\cite{schwenk2006computing}] \label{theorem1}
		Let $H_i$ be $r_i$ regular graph, $1 \leq i \leq m$, and $H$ be an $m$-vertex graph. Then $V(H_1) \cup V(H_2) \cup \ldots \cup V(H_m)$ is an equitable partition  of $H[H_1 ,H_2 , \ldots,H_m]$ and the characteristic polynomial of  $H[H_1 ,H_2 , \ldots,H_m]$
		is  $\psi(H[H_1 ,H_2, \ldots,H_m],x) = \psi(Q,x) \Pi _{i=1} ^m \psi (H_i, x)/(x- r_i)$, where $Q$ is the quotient matrix. 
	\end{theorem}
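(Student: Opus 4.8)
The plan is to diagonalize the adjacency matrix $A = A(H[H_1,\ldots,H_m])$ by splitting the ambient space $\mathbb{R}^N$, where $N = \sum_{i=1}^m n_i$ and $n_i = |V(H_i)|$, into two $A$-invariant subspaces adapted to the partition, and to compute the characteristic polynomial on each piece separately. Throughout I write $V_i := V(H_i)$ and $\mathbf{1}_{V_i}$ for the characteristic vector of $V_i$.

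First I would record the block structure of $A$. Ordering the vertices of the join as $V_1 \cup \cdots \cup V_m$, the diagonal block indexed by $V_i$ is $A(H_i)$, while the block indexed by $(V_i,V_j)$ with $i \neq j$ is the all-ones matrix $J_{n_i \times n_j}$ when $ij \in E(H)$ and the zero matrix otherwise. Since $H_i$ is $r_i$-regular, every $v \in V_i$ has exactly $r_i$ neighbours inside $V_i$, and for $j \neq i$ it is adjacent to either all of $V_j$ or none of it according as $ij \in E(H)$ or not; this count is independent of the chosen $v \in V_i$, so the partition is equitable with quotient matrix $Q=(q_{ij})$ given by $q_{ii}=r_i$, $q_{ij}=n_j$ if $ij \in E(H)$, and $q_{ij}=0$ otherwise.

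Next I would set $U = \operatorname{span}\{\mathbf{1}_{V_1},\ldots,\mathbf{1}_{V_m}\}$, the space of vectors constant on each part, and $W = U^{\perp}$, the space of vectors summing to zero on each part, so that $\mathbb{R}^N = U \oplus W$. A direct computation using the equitable property gives $A\,\mathbf{1}_{V_i} = \sum_{k} q_{ki}\,\mathbf{1}_{V_k}$, so $U$ is $A$-invariant and the matrix of $A|_U$ in the basis $\{\mathbf{1}_{V_i}\}$ is $Q$ (equivalently $Q^{\top}$, which has the same characteristic polynomial); hence $\psi(A|_U,x)=\psi(Q,x)$. Since $A$ is symmetric and $U$ is invariant, $W$ is invariant as well. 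The crucial observation is that each off-diagonal block $J_{n_i \times n_j}$ annihilates any vector whose $V_j$-coordinates sum to zero; consequently, for $w \in W$ one has $(Aw)|_{V_i} = A(H_i)\,(w|_{V_i})$, so $A|_W$ is the block-diagonal operator $\bigoplus_{i=1}^m A(H_i)$ restricted to $W = \bigoplus_{i=1}^m W_i$, where $W_i$ denotes the zero-sum subspace of $\mathbb{R}^{n_i}$.

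Finally I would evaluate $\psi(A|_W,x) = \prod_{i=1}^m \psi(A(H_i)|_{W_i},x)$. Because $H_i$ is $r_i$-regular, $\mathbf{1}_{V_i}$ is an eigenvector of $A(H_i)$ with eigenvalue $r_i$ spanning $W_i^{\perp}$, so $A(H_i)$ restricts to $W_i$ with $\psi(H_i,x) = (x-r_i)\,\psi(A(H_i)|_{W_i},x)$, that is, $\psi(A(H_i)|_{W_i},x) = \psi(H_i,x)/(x-r_i)$. Multiplying the contributions from the complementary invariant subspaces $U$ and $W$ yields $\psi(H[H_1,\ldots,H_m],x) = \psi(Q,x)\,\prod_{i=1}^m \psi(H_i,x)/(x-r_i)$, as claimed. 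I expect the step requiring the most care to be the verification that the all-ones blocks act as zero on $W$, so that $A|_W$ genuinely decouples into block-diagonal form; the regularity hypothesis is precisely what makes this decoupling compatible with the eigenvalue $r_i$ being factored out of each $\psi(H_i,x)$ cleanly.
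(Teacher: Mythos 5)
Your proof is correct and complete: the decomposition of $\mathbb{R}^N$ into the span $U$ of the part-indicator vectors and its orthogonal complement $W$, the observation that the all-ones off-diagonal blocks kill $W$, and the regularity of each $H_i$ being exactly what lets you split off the factor $(x-r_i)$ are precisely the standard argument for this result. The paper itself states this theorem as an imported citation to Schwenk without reproducing a proof, and your argument is the same invariant-subspace/equitable-partition proof found in that source, so there is nothing to flag.
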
 
	Chakrabarty et al. \cite{chakrabarty2009undirected} defined ingeniously the \textit{power graph} $\mathscr{P}(G)$ of a group $G$, where the vertex set of $  \mathscr{P}(G)$ is $G$  and two distinct vertices are adjacent  when one is the integral power of the other. The direct product of two groups $G_1$ and $G_2$, is the group $G_1 \times G_2=\{(a,b):a \in G_1,b \in G_2\}$ where  the operation is component-wise. 
	Mehranian et al. \cite{mehranian2017spectra} studied the adjacency spectrum of power graphs of cyclic groups, elementary abelian groups of prime power order, the Mathieu group $M_{11}$ and the
	dihedral groups of order  twice a prime power. For distinct primes $p,q$ and $r$, Ghorbani et al. \cite{ghorbani2018characteristic} obtained the characteristic polynomial for power graphs of groups with order $p^3$ and $pqr$. Chattopadhyay et al. \cite{chattopadhyay2018spectral}  determined the characteristic polynomial of power graphs of  dihedral group and generalized quaternion group.  Bhuniya et al. \cite{bhuniya2017power} explored some relationship of  $\mathscr{P}(G_1 \times G_2)$ with $\mathscr{P}(G_1)$ and $\mathscr{P}(G_2)$. Fast forward to 2021, Jafari and Chattopadhyay \cite{jafari2022spectrum} contributed to the spectrum of proper power graphs of direct product of  finite element prime order groups (\textit{EPO -groups}).  For more interesting findings on the power graph, one may refer \cite{kumar2021recent}.\\

In this article we undertake an analysis of the spectrum of power graphs within the direct product of cyclic groups. In Section \ref{2}, we focus on the structural properties of $\mathscr{P}(\mathbb{Z}_{m} \times \mathbb{Z}_{n})$. In Section \ref{3}, we provide the  characteristic polynomial of $\mathscr{P}(\mathbb{Z}_m \times \mathbb{Z}_n)$, when $m|n$. In addition to this, we elaborate the characteristic polynomial of $\mathscr{P}(\mathbb{Z}_m \times \mathbb{Z}_n)$ for $m=p,$ $pq,$ and $p^2$.
	In Section \ref{4}, we find the full spectrum of $\mathscr{P}(\mathbb{Z}_p \times \mathbb{Z}_{pq})$  and  $\mathscr{P}(\mathbb{Z}_{p^2} \times \mathbb{Z}_{p^2})$.
    Finally, in Section \ref{5}, we obtain the characteristic polynomial of $\mathscr{P}(\mathbb{Z}_m \times \mathbb{Z}_n)$ when $m \nmid n$, and then provide explicitly the characteristic polynomial of  $\mathscr{P}(\mathbb{Z}_{p^2} \times \mathbb{Z}_n)$  for $p^2\nmid n$ but $p|n$. 
	\section{Properties of $\mathscr{P}(G_1 \times G_2)$}\label{2}
In this section, we present some properties of $\mathscr{P}(G_1 \times G_2)$,  where $G_1$ and $G_2$ are any two finite groups.
	\begin{nota}{\rm
			We denote $J$, $I$, $M$ and $O$ as the  matrix comprising of all ones, the identity matrix, permutation matrix and the zero matrix of appropriate orders, respectively.	$E_{r \times s}$ denotes a  matrix of order $r \times s$ which has all one in last row and zero otherwise. $L_{r \times s}$ denotes a matrix of order $r \times s$ in which  $(r,s)^{th}$ element is $1$ and zero otherwise.
			For finite groups $G_1,G_2$ and  $(a,b) \in G_1 \times G_2$, $ \langle (a,b) \rangle$ denotes the subgroup of  $G_1 \times G_2$ generated by $ (a,b)$, and $|\langle(a,b)\rangle|$ denotes the order of $ \langle (a,b) \rangle$.  $T_{\langle (a,b) \rangle }$ denotes the set  of all generators of $ \langle (a,b) \rangle$.  We denote all the divisors of $n$ as $d_1,d_2,\ldots,d_k$, for some integer $k$. For any integer $m$, $\phi(m)$ denotes Euler's totient function. }
	\end{nota}
	We define a relation $\rho$ on $\mathscr{P}(G_1 \times G_2)$ as $(a_1,b_1)~ \rho~ (a_2,b_2)$ if and only if  $\langle { (a_1,b_1)}\rangle = \langle {(a_2,b_2)} \rangle$.
	Clearly,  $ \rho $  is an equivalence relation. The disjoint equivalence classes of $\rho$  are exactly $T_{\langle (a_1,b_1) \rangle}, T_{\langle (a_2,b_2) \rangle},$ $ \ldots, $ $T_{\langle (a_l,b_l) \rangle}$, for some positive integer $l$ and $(a_1,b_1), (a_2,b_2)\ldots, (a_l,b_l)\in V(\mathscr{P}(G_1 \times G_2)) $. Then we have the following partition of $V(\mathscr{P}(G_1 \times G_2))$,
	\begin{equation}
	V(\mathscr{P}(G_1 \times G_2))=	T_{\langle (a_1,b_1) \rangle} \cup T_{\langle (a_2,b_2) \rangle} \cup \ldots, T_{\langle (a_l,b_l) \rangle}
		\end{equation}
	Throughout the paper, we represent $A(\mathscr{P}(G))$ as a block matrix whose rows and columns are indexed by $T_{\langle (a_1,b_1) \rangle}, T_{\langle (a_2,b_2) \rangle},\ldots, T_{\langle (a_l,b_l) \rangle}$.
	\begin{lemma}[Corollary $2.9$, \cite{panda2018connectedness}]\label{lemma2.2}
		Let $S_1$, $S_2$ be distinct cyclic subgroups  of group  $G$ with generator sets $T_{S_1}$ and $T_{S_1}$ respectively. Then an  element of   ${T_{S_1}}$ is adjacent to an  element of   $T_{S_2}$ in $\mathscr{P}(G)$ if and only if all the elements of $T_{S_1}$ are adjacent to all the elements of $T_{S_2}$. 
	\end{lemma}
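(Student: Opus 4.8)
The plan is to translate adjacency in the power graph into containment of cyclic subgroups, and then to observe that such containment is a property of the subgroups $S_1$ and $S_2$ themselves rather than of the particular generators chosen. The ``if'' direction is immediate: if every element of $T_{S_1}$ is adjacent to every element of $T_{S_2}$, then in particular some element of $T_{S_1}$ is adjacent to some element of $T_{S_2}$.

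For the ``only if'' direction, I would start from the defining property of the power graph: two distinct vertices $u,v$ satisfy $u \sim v$ exactly when $u \in \langle v \rangle$ or $v \in \langle u \rangle$. Suppose $x \in T_{S_1}$ and $y \in T_{S_2}$ with $x \sim y$. Since $x$ generates $S_1$ and $y$ generates $S_2$, we have $\langle x \rangle = S_1$ and $\langle y \rangle = S_2$, so the adjacency forces either $S_1 \subseteq S_2$ or $S_2 \subseteq S_1$. Because $S_1 \neq S_2$, one of these containments is strict; without loss of generality suppose $S_1 \subsetneq S_2$.

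The key step is that this containment no longer refers to the specific elements $x$ or $y$. For arbitrary $x' \in T_{S_1}$ and $y' \in T_{S_2}$ we again have $\langle x' \rangle = S_1 \subsetneq S_2 = \langle y' \rangle$, hence $x' \in \langle y' \rangle$; moreover $x' \neq y'$, since an equality would yield $S_1 = \langle x' \rangle = \langle y' \rangle = S_2$, contradicting distinctness. Therefore $x' \sim y'$, which gives the complete bipartite adjacency between $T_{S_1}$ and $T_{S_2}$. I do not anticipate a genuine obstacle here; the only point requiring care is to isolate the equivalence between adjacency in $\mathscr{P}(G)$ and one-sided subgroup membership, together with the fact that for a generator, membership in its own cyclic subgroup coincides with membership in the whole $S_i$. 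Once this reduction is made, the argument is purely about subgroup inclusion and is manifestly independent of which generators are selected.
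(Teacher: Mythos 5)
Your argument is correct. Note that the paper does not prove this lemma at all --- it is imported verbatim as Corollary 2.9 of the cited reference \cite{panda2018connectedness} --- so there is no in-paper proof to compare against; your reduction of adjacency to the containment $S_1 \subseteq S_2$ or $S_2 \subseteq S_1$ (a property of the subgroups, not of the chosen generators) is exactly the standard argument one would expect behind that citation, and the details (nonemptiness of the generator sets for the trivial direction, and $x' \neq y'$ following from $S_1 \neq S_2$) are handled properly.
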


	\begin{lemma}\label{lema2.3}
		For $(a,b) \in G_1 \times G_2$, $T_{\langle {(a,b)} \rangle}$ induces a complete subgraph of $\mathscr{P}(G_1 \times G_2)$ of order ${\phi(|\langle (a,b)\rangle|)}$.
	\end{lemma}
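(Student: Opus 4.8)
The plan is to treat the two assertions separately—the size of the set $T_{\langle(a,b)\rangle}$ and the completeness of the subgraph it induces—since both follow quickly from the single observation that $T_{\langle(a,b)\rangle}$ is, by the notation fixed above, exactly the generator set of a \emph{cyclic} group.

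First I would set $C=\langle(a,b)\rangle$ and write $N=|C|=|\langle(a,b)\rangle|$. By construction $C$ is cyclic of order $N$, and $T_{\langle(a,b)\rangle}$ is precisely its set of generators. A cyclic group of order $N$ has exactly $\phi(N)$ generators, since the element $(a,b)^{t}$ generates $C$ if and only if $\gcd(t,N)=1$. Hence $|T_{\langle(a,b)\rangle}|=\phi(N)=\phi(|\langle(a,b)\rangle|)$, which gives the stated order.

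For completeness I would pick two distinct elements $x,y\in T_{\langle(a,b)\rangle}$. Since each of them generates $C$, we have $\langle x\rangle=\langle y\rangle=C$, so in particular $y\in C=\langle x\rangle$; therefore $y=x^{k}$ for some $k\in\mathbb{N}$. By the definition of the power graph this immediately yields $x\sim y$ in $\mathscr{P}(G_1\times G_2)$. As $x,y$ were an arbitrary pair of distinct generators, every two vertices of $T_{\langle(a,b)\rangle}$ are adjacent, so the induced subgraph is complete.

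There is no real obstacle in this argument; the only point requiring a moment of care is to match the adjacency condition of $\mathscr{P}(G_1\times G_2)$—that one vertex be an integral power of the other—with the group-theoretic fact that two generators of the same cyclic group are mutual powers. Once this identification is made explicit, both halves of the statement are immediate, and the lemma will be used later mainly to recognize each block $T_{\langle(a_i,b_i)\rangle}$ as a clique $K_{\phi(|\langle(a_i,b_i)\rangle|)}$ in the block decomposition of $A(\mathscr{P}(G_1\times G_2))$.
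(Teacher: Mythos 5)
Your proof is correct and follows the same route as the paper: the paper likewise identifies $T_{\langle(a,b)\rangle}$ as the generator set of the cyclic group $\langle(a,b)\rangle$ to get the count $\phi(|\langle(a,b)\rangle|)$, and then observes that completeness is immediate from the definition of the power graph. You have simply spelled out the "straightforward" adjacency step (two generators of the same cyclic group are mutual powers), which the paper leaves implicit.
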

	
	\begin{proof}
		As $T_{\langle {(a,b)} \rangle}$ is the generator set of the cyclic subgroup $\langle {(a,b)} \rangle$,  $|T_{\langle {(a,b)} \rangle}|= {\phi(|\langle (a,b)\rangle|)}$.  By the definition of  power graph, the result is straightforward.
	\end{proof}

	\begin{lemma}\label{lemma2.3}
	The partition	$T_{\langle (a_1,b_1)\rangle} \cup T_{\langle (a_2,b_2) \rangle}\cup \ldots\cup T_{\langle (a_l,b_l)\rangle}$ (with respect to $\rho$) is an equitable partition of the vertex set of $\mathscr{P}(G_1 \times G_2)$ with $Q=(q_{ij})_{l \times l}$, where 
		\begin{equation}\label{eq2} 
			q_{ij}	= \begin{cases}
				\phi(|\langle(a_j,b_j)\rangle|) & \text{if~} i \neq j \text{~and~} T_{\langle (a_i,b_i)\rangle} \sim T_{\langle (a_j,b_j)\rangle} \\
				\phi(|\langle(a_i,b_i)\rangle|)-1 &  \text{if~}  i = j \\
				0 & \text{otherwise}\\
			\end{cases}
		\end{equation}
	\end{lemma}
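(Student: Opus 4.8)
The plan is to verify the two defining conditions of an equitable partition directly from the structure already established: that each block $T_{\langle(a_i,b_i)\rangle}$ is the generator set of a cyclic subgroup, and that adjacency between two such blocks is all-or-nothing by Lemma \ref{lemma2.2}. Concretely, I must show that for any fixed pair of indices $i,j$, the quantity $|N(v)\cap T_{\langle(a_j,b_j)\rangle}|$ is the same for every $v\in T_{\langle(a_i,b_i)\rangle}$, and that this common value equals the stated $q_{ij}$.

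First I would handle the off-diagonal case $i\neq j$. Fix $v\in T_{\langle(a_i,b_i)\rangle}$ and count its neighbours inside $T_{\langle(a_j,b_j)\rangle}$. By Lemma \ref{lemma2.2}, either $v$ is adjacent to \emph{every} element of $T_{\langle(a_j,b_j)\rangle}$ or to \emph{none} of them, and moreover this dichotomy is determined solely by whether the two subgroups satisfy $T_{\langle(a_i,b_i)\rangle}\sim T_{\langle(a_j,b_j)\rangle}$, independently of the choice of $v$. Hence when the blocks are adjacent, $|N(v)\cap T_{\langle(a_j,b_j)\rangle}|=|T_{\langle(a_j,b_j)\rangle}|=\phi(|\langle(a_j,b_j)\rangle|)$ for every $v$, and when they are not adjacent the count is $0$. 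This matches the first and third cases of \eqref{eq2} and confirms the partition condition for $i\neq j$.

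Next I would treat the diagonal case $i=j$. Here, by Lemma \ref{lema2.3}, the block $T_{\langle(a_i,b_i)\rangle}$ induces a complete subgraph of order $\phi(|\langle(a_i,b_i)\rangle|)$, so every vertex in this block is adjacent to all the other $\phi(|\langle(a_i,b_i)\rangle|)-1$ vertices of the same block. Thus $|N(v)\cap T_{\langle(a_i,b_i)\rangle}|=\phi(|\langle(a_i,b_i)\rangle|)-1$ for every $v$ in the block, which is independent of $v$ and equals the second case of \eqref{eq2}. Combining the two cases shows every $q_{ij}$ is well-defined and independent of the representative $v\in V_i$, so the partition is equitable with the asserted quotient matrix.

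The routine verification carries almost no difficulty once the two preceding lemmas are in place; the only point deserving care is making explicit that the adjacency relation $T_{\langle(a_i,b_i)\rangle}\sim T_{\langle(a_j,b_j)\rangle}$ is genuinely a property of the pair of subgroups rather than of individual vertices—this is exactly what Lemma \ref{lemma2.2} guarantees, and it is the crux that makes the neighbour-count constant across each block. I would therefore emphasize that step and let the rest follow immediately.
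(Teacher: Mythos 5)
Your proof is correct and follows essentially the same route as the paper: the paper expresses $A(\mathscr{P}(G_1\times G_2))$ in block form with blocks $J$, $J-I$, or $O$ (justified by Lemma \ref{lemma2.2} and Lemma \ref{lema2.3}) and reads off $q_{ij}$ as the constant row sums, which is exactly your direct count of $|N(v)\cap T_{\langle(a_j,b_j)\rangle}|$ in matrix language. No gap.
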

	
	\begin{proof}
		 From Lemma \ref{lemma2.2} and Lemma \ref{lema2.3}, the adjacency matrix of $\mathscr{P}(G_1 \times G_2)$ can be represented in terms of the following block matrix: 
		
		\[\scriptsize
		\begin{blockarray}{c ccccc}
			& T_{\langle (a_1,b_1)\rangle } & \cdots &T_{ \langle (a_j,b_j) \rangle } & \cdots & T_{\langle (a_l,b_l) \rangle}  \\
			\begin{block}{c [ccccc]}
				T_{\langle (a_1,b_1) \rangle} & &  &\vdots  &  &  \\
				\vdots &  &  & \vdots &  &   \\
				T_{\langle (a_i,b_i) \rangle } &\cdots  &\cdots & B(a_i,a_j,b_i,b_j) & \cdots& \cdots  \\
				\vdots &  &  & \vdots &  &  \\
				T_{\langle (a_l,b_l) \rangle } &  &  & \vdots &  &   \\
			\end{block}
		\end{blockarray}
		\]

		where
		\begin{equation*} 
			B(a_i,a_j,b_i,b_j)= \begin{cases}
				J_{\phi(|\langle(a_i,b_i)\rangle|)\times \phi(|\langle(a_j,b_j)\rangle|)} & \text{if~} i \neq j \text{~and~} T_{\langle (a_i,b_i) \rangle } \sim T_{\langle (a_j,b_j)\rangle}  \\
				(J-I)_{\phi(|\langle(a_i,b_i)\rangle|)\times \phi(|\langle(a_i,b_i)\rangle|)} & \text{if~}  i = j  \\
				O_{\phi(|\langle(a_i,b_i)\rangle|)\times \phi(|\langle(a_j,b_j)\rangle|)} & \text{otherwise}\\
			\end{cases}
		\end{equation*} 
		We get that row sum of the matrix  $B(a_i,a_j,b_i,b_j)$ is $q_{ij}$ as given in equation (\ref{eq2}).
		Thus, the given  partition is  equitable.
	\end{proof}

	%

\begin{lemma}\label{lemma2.5}
	In $\mathscr{P}(\mathbb{Z}_{m} \times \mathbb{Z}_{n})$, a vertex of $T_{\langle (a_i,b_i)\rangle}$ is adjacent to  a vertex of $T_{\langle (a_i,b_i) \rangle}$ if and only if either $\langle (a_i,b_i)\rangle \subset \langle (a_j,b_j)\rangle$ or $\langle (a_j,b_j)\rangle \subset \langle (a_i,b_i)\rangle$ in $\mathbb{Z}_{m} \times \mathbb{Z}_{n}$.
\end{lemma}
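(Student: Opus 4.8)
The plan is to reduce the assertion about entire generator sets to a statement about two representative elements, and then to convert power-graph adjacency into a containment relation between the corresponding cyclic subgroups. Write $g=(a_i,b_i)$ and $h=(a_j,b_j)$, and set $S_i=\langle g\rangle$, $S_j=\langle h\rangle$; since $T_{\langle(a_i,b_i)\rangle}$ and $T_{\langle(a_j,b_j)\rangle}$ are distinct classes of $\rho$, the subgroups $S_i$ and $S_j$ are distinct. By Lemma \ref{lemma2.2}, some element of $T_{S_i}$ is adjacent to some element of $T_{S_j}$ if and only if $g\sim h$, so it suffices to analyse adjacency of the single pair $g,h$.

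By the definition of the power graph, $g\sim h$ holds precisely when $g=h^k$ or $h=g^k$ for some $k\in\mathbb{N}$. The first observation is that, in the finite group $\mathbb{Z}_m\times\mathbb{Z}_n$, the condition ``$g$ is a positive power of $h$'' is equivalent to $g\in\langle h\rangle$: one direction is immediate, and for the converse every element of the finite cyclic group $\langle h\rangle$ is realised as $h^k$ for some positive integer $k$ (letting $k$ range over $\{1,\dots,|\langle h\rangle|\}$ recovers the identity as well). Thus $g\sim h$ if and only if $g\in\langle h\rangle$ or $h\in\langle g\rangle$.

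It then remains to note that $g\in\langle h\rangle$ is equivalent to $S_i=\langle g\rangle\subseteq\langle h\rangle=S_j$: if $g\in\langle h\rangle$ then the subgroup $\langle h\rangle$ contains $g$ and hence all of $\langle g\rangle$, while conversely $g\in\langle g\rangle\subseteq\langle h\rangle$. Because $S_i\neq S_j$, this containment is necessarily proper, i.e.\ $\langle(a_i,b_i)\rangle\subset\langle(a_j,b_j)\rangle$. Applying the same equivalence to $h\in\langle g\rangle$ and combining the two cases yields exactly the stated criterion. The only point requiring care---and the sole place where finiteness of the group is used---is the identification of ``being a positive power of'' with subgroup membership; beyond that the argument is a direct translation through Lemma \ref{lemma2.2}, and in fact works verbatim for any finite group, the specialization to $\mathbb{Z}_m\times\mathbb{Z}_n$ serving only to fix the ambient context.
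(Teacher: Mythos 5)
Your proposal is correct and follows essentially the same route as the paper's own proof: reduce to a single pair of representatives via Lemma \ref{lemma2.2}, translate power-graph adjacency into subgroup membership, and then into containment of the cyclic subgroups. You are in fact slightly more careful than the paper, making explicit both the use of finiteness (so that membership in $\langle h\rangle$ coincides with being a positive power of $h$) and the properness of the containment when the two subgroups are distinct.
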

\begin{proof}
	Suppose a vertex of $T_{\langle (a_i,b_i)\rangle}$ is adjacent with a vertex of $T_{\langle (a_j,b_j) \rangle}$. By Lemma \ref{lemma2.2}, all vertices of $T_{\langle (a_i,b_i)\rangle}$ are adjacent to all  vertex of $T_{\langle (a_j,b_j)\rangle}$.  Without loss of generality, by the definition of power graph there exists an integer $t$ such that $(a_i,b_i)^t= (a_j,b_j)$. This implies that $(a_j,b_j) \in \langle (a_i,b_i) \rangle$. Then we get $\langle (a_j,b_j)\rangle \subset \langle (a_i,b_i)\rangle$.\\
	
	Conversely, without loss of generality, let  $\langle (a_j,b_j)\rangle \subset \langle (a_i,b_i)\rangle$. Then $ (a_j,b_j) \in \langle (a_i,b_i)\rangle$. That means, there exists an integer $t$ such that $(a_i,b_i)^t = (a_j,b_j)$. This implies that $(a_i,b_i)$ is adjacent to $(a_j,b_j)$ in $\mathscr{P}(\mathbb{Z}_{m} \times \mathbb{Z}_{n})$.
\end{proof}

\begin{lemma}\label{lema2.5}
	 Let $H$ be a graph with $V(H)= \{1,2,\ldots,l\}$  and 	where $i \sim j$  if and only if either $\langle (a_i,b_i)\rangle \subset \langle (a_j,b_j)\rangle$ or $\langle (a_j,b_j)\rangle \subset \langle (a_i,b_i)\rangle$ in $\mathbb{Z}_{m} \times \mathbb{Z}_{n}$. Then
	\begin{equation*}
		\mathscr{P}(\mathbb{Z}_{m} \times \mathbb{Z}_{n}) = H[K_{\phi(|\langle (a_1,b_1)\rangle|)}, K_{\phi(|\langle (a_2,b_2)\rangle|)},\ldots, K_{\phi(|\langle (a_l,b_l)\rangle|)}].
	\end{equation*}

\end{lemma}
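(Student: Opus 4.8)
The plan is to prove equality of two graphs on the same vertex set by matching edge sets, exploiting the fact that the generalized-join edge set splits naturally into ``within-block'' and ``between-block'' pieces that correspond exactly to the two types of adjacency already characterized in the preceding lemmas. Throughout I would identify the $i$-th block of the generalized join with the $\rho$-class $T_{\langle (a_i,b_i)\rangle}$, so that $V(H_i) = T_{\langle (a_i,b_i)\rangle}$ and $H_i = K_{\phi(|\langle (a_i,b_i)\rangle|)}$.

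First I would settle the vertices. By Lemma \ref{lema2.3} the set $T_{\langle (a_i,b_i)\rangle}$ has exactly $\phi(|\langle (a_i,b_i)\rangle|)$ elements and induces a complete subgraph, so taking $H_i = K_{\phi(|\langle (a_i,b_i)\rangle|)}$ the disjoint union $\bigcup_i V(H_i)$ reproduces precisely the partition of $V(\mathscr{P}(\mathbb{Z}_m \times \mathbb{Z}_n))$ into $\rho$-classes. Hence both graphs share the same vertex set, and the within-class edges are accounted for: the first union $\bigcup_i E(H_i)$ in the definition of the generalized join consists exactly of the edges internal to each $K_{\phi(|\langle (a_i,b_i)\rangle|)}$, which by Lemma \ref{lema2.3} are precisely the edges of $\mathscr{P}(\mathbb{Z}_m \times \mathbb{Z}_n)$ whose endpoints lie in a common $T_{\langle (a_i,b_i)\rangle}$.

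The substantive step is the between-class edges, and this is where Lemmas \ref{lemma2.2} and \ref{lemma2.5} combine. For fixed $i \neq j$, Lemma \ref{lemma2.2} forces the bipartite subgraph between $T_{\langle (a_i,b_i)\rangle}$ and $T_{\langle (a_j,b_j)\rangle}$ to be all-or-nothing: complete bipartite if any representative pair is adjacent, empty otherwise. Lemma \ref{lemma2.5} then tells us this adjacency occurs if and only if $\langle (a_i,b_i)\rangle \subset \langle (a_j,b_j)\rangle$ or $\langle (a_j,b_j)\rangle \subset \langle (a_i,b_i)\rangle$, which is verbatim the rule defining $i \sim j$ in $H$. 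Thus the cross-class edges of the power graph are exactly $\bigcup_{ij \in E(H)} \{uv : u \in V(H_i),\, v \in V(H_j)\}$, matching the second union in the generalized join. Combining the three observations yields equality of edge sets and hence of the graphs.

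I do not expect a genuine obstacle here; the result is essentially a bookkeeping synthesis of the three earlier lemmas. The only point requiring care is consistency of the vertex identification $V(H_i) = T_{\langle (a_i,b_i)\rangle}$ and the explicit observation that the subgroup-containment condition of Lemma \ref{lemma2.5} is literally the adjacency relation used to build $H$, so that the index set $\{ij \in E(H)\}$ and the set of adjacent class-pairs in $\mathscr{P}(\mathbb{Z}_m \times \mathbb{Z}_n)$ coincide.
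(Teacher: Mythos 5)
Your proposal is correct and follows exactly the route the paper intends: the paper's own proof is a one-line citation of Lemmas \ref{lemma2.2}, \ref{lema2.3} and \ref{lemma2.5}, and your write-up simply makes explicit the same synthesis (within-class completeness, all-or-nothing cross-class adjacency, and the subgroup-containment criterion matching the definition of $H$). No difference in method, only in level of detail.
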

	\begin{proof}
	  The result follows from Lemma \ref{lemma2.5}. Lemma \ref{lema2.3} and Lemma \ref{lemma2.2}.
	\end{proof}

\begin{lemma}\cite{cvetkovic2009introduction}\label{lemma2.6}
	The spectrum of the complete graph $K_n$ is
	 \begin{equation*}
		\begin{pmatrix}
			n-1 & -1\\
			1& n-1
		\end{pmatrix}
	\end{equation*}
	
\end{lemma}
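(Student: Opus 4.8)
The plan is to realize the adjacency matrix of $K_n$ explicitly and read off its eigenvalues from those of the all-ones matrix, since every pair of distinct vertices in $K_n$ is adjacent. In the notation of the paper, $A(K_n) = J - I$, where $J$ is the $n \times n$ all-ones matrix and $I$ is the identity of the same order. Thus the spectral problem reduces to understanding the spectrum of $J$ and then applying a uniform scalar shift.

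First I would determine the eigenvalues of $J$. Because $J$ has rank one, $0$ is an eigenvalue whose eigenspace is the $(n-1)$-dimensional subspace $\{v : \mathbf{1}^{\top} v = 0\}$, so its multiplicity is at least $n-1$. The all-ones vector $\mathbf{1}$ satisfies $J\mathbf{1} = n\mathbf{1}$, so $n$ is an eigenvalue with eigenvector $\mathbf{1}$. A dimension count then shows these account for all $n$ eigenvalues: $n$ with multiplicity $1$ and $0$ with multiplicity $n-1$.

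Next, since $A(K_n) = J - I$ and $I$ acts as a uniform shift commuting with every matrix, each eigenvalue $\mu$ of $J$ yields the eigenvalue $\mu - 1$ of $A(K_n)$ with the same eigenvector. Hence $n$ becomes $n-1$ (multiplicity $1$) and $0$ becomes $-1$ (multiplicity $n-1$), which is exactly the claimed spectrum. Equivalently, one may compute the characteristic polynomial directly as $\psi(A(K_n), x) = \det\bigl(xI - (J - I)\bigr) = \det\bigl((x+1)I - J\bigr)$, and using $\det(\lambda I - J) = \lambda^{n-1}(\lambda - n)$ with $\lambda = x+1$ this factors as $(x+1)^{n-1}\bigl(x - (n-1)\bigr)$, from which the two distinct eigenvalues and their multiplicities can be read off.

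There is no genuine obstacle here, as this is a classical fact; the only point requiring care is verifying that the eigenvalue $0$ of $J$ has multiplicity exactly $n-1$, which follows from the rank-one structure of $J$. Everything else is a routine shift of the spectrum.
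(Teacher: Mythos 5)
Your argument is correct and complete: writing $A(K_n)=J-I$, using the rank-one structure of $J$ to get eigenvalue $0$ with multiplicity exactly $n-1$ and eigenvalue $n$ with multiplicity $1$, and then shifting by $-1$ gives precisely the claimed spectrum, and your factorization $\psi(A(K_n),x)=(x+1)^{n-1}\bigl(x-(n-1)\bigr)$ confirms it. The paper itself supplies no proof of this lemma --- it is quoted directly from the cited reference as a classical fact --- so there is no in-paper argument to compare against; your proof is the standard one and nothing further is needed.
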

\section{Characteristic polynomial of $\mathscr{P}(\mathbb{Z}_{m} \times \mathbb{Z}_{n})$ with  $m|n$}\label{3}

In this section, since we are taking $m|n$, the order of cyclic subgroups of $\mathbb{Z}_m \times \mathbb{Z}_n$ are divisors of $n$. In the rest of the paper, we take $S=\{d_1,d_2,\ldots, d_k\}$ as the set of all divisors of $n$. 
\begin{theorem}\label{theorem3}
	Let $G = \mathbb{Z}_{m} \times \mathbb{Z}_n$ with $m|n$.  The characteristic polynomial of $A(\mathscr{P}(G))$ is 
	\begin{equation*}
		\psi(A(\mathscr{P}(G)),x) = (1+x)^{\alpha}~ \psi(Q,x)
	\end{equation*}
where $\alpha = \sum_{i=1} ^k n_i(\phi(d_i)-1)$, $n_i$ is the number of cyclic subgroups of $G$ of order $d_i$ and $Q =(q_{ij})_{l \times l}$ with
\begin{equation*} 
	q_{ij}	= \begin{cases}
		\phi(|\langle (a_j,b_j) \rangle|) & \text{if~} i \neq j \text{~and~} T_{\langle (a_i,b_i) \rangle } \sim T_{\langle (a_j,b_j) \rangle} \\
		\phi(|\langle (a_i,b_i) \rangle|)-1 &  \text{if~}  i = j \\
		0 & \text{otherwise}\\
	\end{cases}
\end{equation*}
\end{theorem}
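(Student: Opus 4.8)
The plan is to exploit the structural description of the power graph as a generalized join already established in Lemma~\ref{lema2.5}, and then feed it into Schwenk's formula (Theorem~\ref{theorem1}). Concretely, by Lemma~\ref{lema2.5} we may write
\[
\mathscr{P}(\mathbb{Z}_m \times \mathbb{Z}_n) = H[K_{\phi(|\langle(a_1,b_1)\rangle|)}, \ldots, K_{\phi(|\langle(a_l,b_l)\rangle|)}],
\]
where $H$ is the $l$-vertex graph whose edges record containment between the cyclic subgroups. Since each block $K_{\phi(|\langle(a_j,b_j)\rangle|)}$ is a complete graph, it is $r_j$-regular with $r_j = \phi(|\langle(a_j,b_j)\rangle|) - 1$, so the hypotheses of Theorem~\ref{theorem1} are met. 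The quotient matrix of this equitable partition is exactly the matrix $Q$ described in Lemma~\ref{lemma2.3}, which coincides with the $Q$ in the present statement.

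Applying Theorem~\ref{theorem1} gives
\[
\psi(A(\mathscr{P}(G)), x) = \psi(Q, x) \prod_{j=1}^l \frac{\psi(K_{\phi(|\langle(a_j,b_j)\rangle|)}, x)}{x - r_j}.
\]
The next step is to insert the characteristic polynomial of a complete graph. By Lemma~\ref{lemma2.6}, the spectrum of $K_t$ consists of $t-1$ with multiplicity $1$ and $-1$ with multiplicity $t-1$, so $\psi(K_t, x) = (x - (t-1))(x+1)^{t-1}$. Writing $\phi_j = \phi(|\langle(a_j,b_j)\rangle|)$, each factor then telescopes:
\[
\frac{\psi(K_{\phi_j}, x)}{x - (\phi_j - 1)} = \frac{(x - (\phi_j - 1))(x+1)^{\phi_j - 1}}{x - (\phi_j - 1)} = (x+1)^{\phi_j - 1}.
\]
The regularity denominator cancels the linear factor of the block's characteristic polynomial, leaving a clean power of $(x+1)$.

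Multiplying these contributions yields $\psi(A(\mathscr{P}(G)), x) = (1+x)^{\sum_{j=1}^l (\phi_j - 1)} \psi(Q, x)$, so it only remains to verify that the exponent equals $\alpha$. Here I would invoke the hypothesis $m \mid n$: the order of every cyclic subgroup of $\mathbb{Z}_m \times \mathbb{Z}_n$ is a divisor of $n$, since each element's order is the least common multiple of a divisor of $m$ and a divisor of $n$, both of which divide $n$. Grouping the $l$ cyclic subgroups by their common order $d_i$, of which there are $n_i$ each contributing $\phi(d_i) - 1$, we obtain $\sum_{j=1}^l (\phi_j - 1) = \sum_{i=1}^k n_i(\phi(d_i) - 1) = \alpha$. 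This regrouping is the only genuinely non-mechanical step, and it is precisely where the assumption $m \mid n$ enters; everything else is a direct substitution into Theorem~\ref{theorem1} together with Lemma~\ref{lemma2.6}.
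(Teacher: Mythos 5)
Your proposal is correct and follows essentially the same route as the paper: express $\mathscr{P}(G)$ as the $H$-generalized join of complete graphs via Lemma \ref{lema2.5}, apply Theorem \ref{theorem1} with the quotient matrix of Lemma \ref{lemma2.3}, cancel each factor $x-(\phi_j-1)$ against $\psi(K_{\phi_j},x)$ using Lemma \ref{lemma2.6}, and regroup the exponent by subgroup order using $m\mid n$. Your write-up is in fact more explicit than the paper's, which simply cites these lemmas, but the argument is the same.
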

\begin{proof}
	As $m|n$, all the elements of the group $G$ have possible orders $d_1,d_2,\ldots,d_k$. This implies that all the cyclic subgroups of $G$ are of orders $d_1,d_2,\ldots,d_k$. Let $n_1, n_2,\ldots,n_k$ be positive integers with $\sum_{i=1} ^{k} n_i=l$. Without loss of generality, suppose  $\langle(a_1, b_1)\rangle,\langle(a_2,b_2)\rangle, \ldots, \langle(a_{n_1},b_{n_1})\rangle$ are the cyclic subgroups of $G$ of order $d_1$, and for $1\leq t \leq k-1$, $\langle(a_{\sum_{i=1} ^{t} n_i+1} ,b_{\sum_{i=1} ^{t} n_i+1})\rangle,\langle(a_{\sum_{i=1} ^{t} n_i+2},b_{\sum_{i=1} ^{t} n_i+2})\rangle, \ldots, $ $ \langle(a_{\sum_{i=1} ^{t+1} n_i},b_{\sum_{i=1} ^{t+1} n_i})\rangle$  are the cyclic subgroups of $G$ of order $d_{t+1}$.\\
	
	We index the rows and columns of  $A(\mathscr{P}(G))$ in the order by the generator sets of the subgroups listed above. By Lemma \ref{lemma2.2} and Lemma \ref{lema2.3}, the block matrix of $\mathscr{P}(G)$ corresponding to $ T_{\langle(a_i,b_i)\rangle} \times T_{\langle(a_j,b_j)\rangle}$ is  $B\big(T_{\langle(a_i,b_i)\rangle},T_{\langle(a_j,b_j)\rangle}\big)$, where

\begin{equation*} 
B\big(T_{\langle(a_i,b_i)\rangle},T_{\langle(a_j,b_j)\rangle}\big)= \begin{cases}
		J_{\phi(|\langle(a_i,b_i)\rangle|)\times \phi(|\langle(a_j,b_j)\rangle|)} & \text{if~} i \neq j \text{~and~} T_{\langle(a_i,b_i)\rangle} \sim T_{\langle(a_j,b_j)\rangle }\\
		(J-I)_{\phi(|\langle(a_i,b_i)\rangle|)\times \phi(|\langle(a_i,b_i)\rangle|)} & \text{if~} i =j\\
		O_{\phi(|\langle(a_i,b_i)\rangle|)\times \phi(|\langle(a_j,b_j)\rangle|)} & \text{otherwise}\\
	\end{cases}
\end{equation*} 
 By  Lemma \ref{lemma2.3}, Lemma \ref{lema2.5}, Lemma \ref{lemma2.6} and Theorem \ref{theorem1}, we get the required result.
\end{proof}

We apply Theorem \ref{theorem3}, in the next few results, namely Theorems \ref{theorem3.1}, \ref{theorem3.2}, \ref{theorem3.3}, to study spectrum of $\mathscr{P}(\mathbb{Z}_m \times \mathbb{Z}_n)$ where $m = p$, $m =pq$ and $m=p^2$ respectively, for distinct primes $p$ and $q$.
 If $p \nmid n$ then $\mathbb{Z}_p \times \mathbb{Z}_n \cong \mathbb{Z}_{pn}$. The spectrum of $\mathbb{Z}_{n}$ is  discussed in \cite{mehranian2017spectra}. So in Theorem \ref{theorem3.1} below, we consider the case $p|n$. Before that, we present the following lemma.

\begin{lemma}\label{lemma3.1}
	Let prime $p$ divides $n$. Without loss of generality, suppose the first $r$ elements in $S$ are divisible by $p^2$, then the next $t$  elements in $S$ are divisible by $p$ but not by $p^2$, and the rest of the elements in $S$ are relatively prime to $p$. Then all possible cyclic subgroups of $\mathbb{Z}_p \times \mathbb{Z}_n$ are as given below:\\ 
	 $   {\langle(0,\frac{n}{d_i})\rangle},  {\langle(1,\frac{n}{d_i})\rangle},$ $ {\langle(2,\frac{n}{d_i})\rangle},$ $ \ldots, {\langle((p-1),\frac{n}{d_i})\rangle}$  for  $1 \leq i \leq r$; 
	${\langle(0,\frac{n}{d_i})\rangle},$ $ {\langle(1,\frac{n}{d_i})\rangle},$ $ {\langle(2,\frac{n}{d_i})\rangle}, \ldots,$ $ {\langle((p-1),\frac{n}{d_i})\rangle},$ $ {\langle(1,\frac{np}{d_i})\rangle}  $  for $r+1 \leq i \leq r+t$; and 
	${\langle(0,\frac{n}{d_i})\rangle} $  for $r+t+1 \leq i \leq k$.
\end{lemma}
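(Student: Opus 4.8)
The plan is to fix the $p$-primary structure once and for all and then treat the cyclic subgroups order by order. I would write $n=p^{a}m$ with $a\ge 1$ and $p\nmid m$, so that $\mathbb{Z}_{p}\times\mathbb{Z}_{n}\cong(\mathbb{Z}_{p}\times\mathbb{Z}_{p^{a}})\times\mathbb{Z}_{m}$ by the Chinese Remainder Theorem. Since $p\mid n$, the order of every element $(x,y)$ equals $\mathrm{lcm}(\mathrm{ord}(x),\mathrm{ord}(y))$, which divides $\mathrm{lcm}(p,n)=n$; hence every cyclic subgroup has order equal to some divisor $d_{i}\in S$, and it suffices to enumerate, for each $i$, the cyclic subgroups of order $d_{i}$. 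Writing $d_{i}=p^{b_{i}}e_{i}$ with $p\nmid e_{i}$, the three ranges in the statement are exactly $b_{i}\ge 2$ (for $1\le i\le r$), $b_{i}=1$ (for $r+1\le i\le r+t$), and $b_{i}=0$ (for $r+t+1\le i\le k$). Under the decomposition above a cyclic subgroup of order $d_{i}$ corresponds to a pair $(C_{p},C_{p'})$, where $C_{p}$ is a cyclic subgroup of order $p^{b_{i}}$ in $\mathbb{Z}_{p}\times\mathbb{Z}_{p^{a}}$ and $C_{p'}$ is the unique cyclic subgroup of order $e_{i}$ in $\mathbb{Z}_{m}$; thus the cyclic subgroups of order $d_{i}$ are in bijection with the cyclic subgroups of order $p^{b_{i}}$ of $\mathbb{Z}_{p}\times\mathbb{Z}_{p^{a}}$, which reduces both the counting and the distinctness questions to this $p$-group.

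Next I would check that every subgroup in the proposed list has order $d_{i}$. Since $d_{i}\mid n$, one has $\mathrm{ord}(n/d_{i})=d_{i}$ in $\mathbb{Z}_{n}$, so $\langle(c,n/d_{i})\rangle$ has order $\mathrm{lcm}(\mathrm{ord}(c),d_{i})$, which is $d_{i}$ when $c=0$ and $\mathrm{lcm}(p,d_{i})$ when $c\ne 0$; in both the $b_{i}\ge 2$ and $b_{i}=1$ cases $p\mid d_{i}$, so this $\mathrm{lcm}$ is $d_{i}$. For the extra generator in the range $b_{i}=1$, note $np/d_{i}=n/e_{i}$ with $p\nmid e_{i}$, whence $\mathrm{ord}(n/e_{i})=e_{i}$ and $\langle(1,np/d_{i})\rangle$ has order $\mathrm{lcm}(p,e_{i})=pe_{i}=d_{i}$. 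For distinctness I would pass to the $p$-part: the $\mathbb{Z}_{m}$-component of each listed generator generates the same order-$e_{i}$ subgroup, so two listed subgroups coincide if and only if their $p$-parts coincide. The $p$-part of $(c,n/d_{i})$ is $(c,w)$ with $w$ of order $p^{b_{i}}$ in $\mathbb{Z}_{p^{a}}$; if $\langle(c,w)\rangle=\langle(c',w)\rangle$, then $(c',w)=k(c,w)$ forces $p^{b_{i}}\mid k-1$, hence $k\equiv 1\pmod{p}$ and $c\equiv c'\pmod{p}$, giving $c=c'$. When $b_{i}=1$ the $p$-part of $(1,np/d_{i})$ is $(1,0)$, whose second coordinate vanishes while $w$ has order $p\ne 1$, so this subgroup differs from all the $\langle(c,n/d_{i})\rangle$.

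Finally I would establish completeness by a count. In $\mathbb{Z}_{p}\times\mathbb{Z}_{p^{a}}$ the number of cyclic subgroups of order $p^{b}$ is $1$ for $b=0$; it is $p+1$ for $b=1$, because the $p$-socle $\mathbb{Z}_{p}\times\mathbb{Z}_{p}$ has $(p^{2}-1)/(p-1)=p+1$ subgroups of order $p$; and it is $p$ for $2\le b\le a$, since there are $p^{b}(p-1)$ elements of order $p^{b}$, each cyclic subgroup having $\phi(p^{b})$ generators. Via the bijection of the first paragraph, the number of cyclic subgroups of $\mathbb{Z}_{p}\times\mathbb{Z}_{n}$ of order $d_{i}$ equals this $p$-part count, namely $p$ when $b_{i}\ge 2$, $p+1$ when $b_{i}=1$, and $1$ when $b_{i}=0$. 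These numbers match the number of subgroups listed for each $i$, and since the listed subgroups of order $d_{i}$ were shown to be distinct, the list exhausts all cyclic subgroups of order $d_{i}$; ranging over $i$ then produces every cyclic subgroup exactly once. The main obstacle is the bookkeeping in the range $b_{i}=1$, where the additional generator $\langle(1,np/d_{i})\rangle$ must be produced and shown distinct from the others; this extra subgroup is precisely the arithmetic shadow of the $p+1$ order-$p$ subgroups in the socle $\mathbb{Z}_{p}\times\mathbb{Z}_{p}$, which is why no analogous extra subgroup appears once $b_{i}\ge 2$.
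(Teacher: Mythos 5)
Your proof is correct, and it reaches the same counts ($p$, $p+1$, $1$) as the paper, but by a genuinely different mechanism. The paper counts the elements of order $d_i$ directly in $\mathbb{Z}_p\times\mathbb{Z}_n$ by summing $\phi(u)\phi(v)$ over the pairs $(u,v)$ with $u\mid p$, $v\mid n$ and $\mathrm{lcm}(u,v)=d_i$, and then divides by $\phi(d_i)$ to get the number of cyclic subgroups; for instance, when $p^2\mid d_i$ it computes $\bigl(\phi(1)\phi(d_i)+\phi(p)\phi(d_i)\bigr)/\phi(d_i)=p$, and when $p\,\|\,d_i$ the extra pair $(p,d_i/p)$ contributes the additional subgroup $\langle(1,np/d_i)\rangle$. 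You instead split off the $p$-primary part via $\mathbb{Z}_p\times\mathbb{Z}_n\cong(\mathbb{Z}_p\times\mathbb{Z}_{p^a})\times\mathbb{Z}_m$ and reduce everything to counting cyclic subgroups of order $p^{b_i}$ in the $p$-group $\mathbb{Z}_p\times\mathbb{Z}_{p^a}$, where the three answers fall out of the socle structure. Your route buys a cleaner structural explanation of \emph{why} the extra subgroup appears exactly when $b_i=1$ (it is one of the $p+1$ lines in the socle $\mathbb{Z}_p\times\mathbb{Z}_p$), at the cost of invoking the CRT decomposition and the correspondence between cyclic subgroups of a direct product of coprime factors and pairs of cyclic subgroups. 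You also supply an explicit distinctness check for the listed generators, which the paper asserts implicitly; since both arguments close the gap between the count and the list by exhibiting the right number of pairwise distinct subgroups, your version is, if anything, slightly more complete on that point.
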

\begin{proof}
	There may not be any $d_i$ which is divisible by $p^2$, and so $r$ may be zero. Since $p|n$, there exists at least one $d_i$ such that $p|d_i$ but $p^2 \nmid d_i$. So $t \geq 1$. And we have at least one $d_i$ which is relatively prime to $n$.
	We have following cases depending on values of $r$ and $t$.\\ 
	\textbf{Case:1}\label{case13.1}  If $r \geq 1$, then $d_1,d_2,\ldots,d_r$ are divisible by $p^2$. So, $gcd(p,d_i) =p$ and $gcd(p,\frac{d_i}{p})=p$ for $1\leq i \leq r$. The number of cyclic subgroups of  $G$ of order $d_i$   are $\frac{(\phi(1)\phi(d_i)+\phi(p)\phi(d_i))}{\phi(d_i)}=p$. These $p$ subgroups are $   {\langle(0,\frac{n}{d_i})\rangle},  {\langle(1,\frac{n}{d_i})\rangle},$ $ {\langle(2,\frac{n}{d_i})\rangle},$ $ \ldots, {\langle((p-1),\frac{n}{d_i})\rangle}$  for $1 \leq i \leq r$. \\ 
	\textbf{Case:2}\label{case23.1}  Here  $d_{r+1},d_{r+2},\ldots,d_{r+t}$ are divisible by $p$ but not by $p^2$. So, $gcd(p,d_i)=p$ and $gcd(p,\frac{d_i}{p})=1$ for $r+1 \leq i \leq r+t$. The number of cyclic subgroups of $G$ of  order $d_i$  are  $\frac{\phi(1)\phi(d_i)+\phi(p)\phi(d_i)+ \phi(p)\phi(\frac{d_i}{p})}{\phi(d_i)}= (p+1)$. These $(p+1)$ subgroups are  ${\langle(0,\frac{n}{d_i})\rangle},$ $ {\langle(1,\frac{n}{d_i})\rangle},$ $ {\langle(2,\frac{n}{d_i})\rangle}, \ldots,$ $ {\langle((p-1),\frac{n}{d_i})\rangle}, {\langle(1,\frac{np}{d_i})\rangle}  $  for  $r+1 \leq i \leq r+t$. \\  
	\textbf{Case:3}\label{case33.1} Here  $d_{r+t+1},d_{r+t+2},\ldots,d_{k}$ are relatively prime to $p$. So, $gcd(p,d_i)=1$ for $r+t+1 \leq i \leq k$ then the number of cyclic subgroup of  $G$ of  order $d_i$   is $\frac{(\phi(1)\phi(d_i))}{\phi(d_i)}=1$. This cyclic subgroup is ${\langle(0,\frac{n}{d_i})\rangle} $  for  $r+t+1 \leq i \leq k$.    
\end{proof}

 By Lemma \ref{lemma3.1}, there are total $pr+(p+1)t+(k-r-t)= l(say)$ cyclic subgroups of $\mathbb{Z}_p \times \mathbb{Z}_n$. 
We list all these in the order as follows : the cyclic subgroups of Case 1 (of Lemma \ref{lemma3.1}) are  $S_1 =  \langle(0,\frac{n}{d_1})\rangle$, $S_2 = \langle(1,\frac{n}{d_1})\rangle,\ldots,S_{pr}=\langle(p-1,\frac{n}{d_r})\rangle$, then the cyclic subgroup of Case 2 are $S_{pr+1}=\langle(0,\frac{n}{d_{r+1}})\rangle,S_{pr+2}=\langle(1,\frac{n}{d_{r+1}})\rangle, \ldots,S_{pr+p(t+1)}=\langle(1,\frac{n}{d_{r+t}})\rangle  $ and finally the cyclic subgroups of Case 3 are $S_{pr+p(t+1)+1}=\langle(0,\frac{n}{d_{r+t+1}})\rangle, \ldots, S_{l}=\langle(0,\frac{n}{d_{k}})\rangle$. 

\begin{theorem}\label{theorem3.1}
	Let $G= \mathbb{Z}_p \times \mathbb{Z}_n$ with $p |n$, $r$ and $t$ as given in Lemma \ref{lemma3.1}. Then,
	\begin{equation*}
		\psi(A(\mathscr{P}(G)),x)= (1+x)^{\alpha} ~ \psi(Q,x)
	\end{equation*}
	where $\alpha = {p\sum_{i=1} ^{r} (\phi(d_i)-1)+ (p+1) \sum_{i=r+1} ^{r+t} (\phi(d_i)-1) + \sum_{i=r+t+1} ^{k} (\phi(d_i)-1) }$ and  $Q =(q_{ij})_{l \times l}$ with
	\begin{equation*} 
		q_{ij}= \begin{cases}
			\phi(|S_j|) & \text{if~} i \neq j \text{~and~} T_{S_i} \sim T_{S_j} \\
			\phi(|S_i|)-1 & \text{if~}  i = j\\
			0 & \text{otherwise}\\
		\end{cases}
	\end{equation*} 
\end{theorem}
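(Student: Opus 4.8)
The plan is to obtain this result as a direct specialization of Theorem \ref{theorem3} to the case $m = p$, drawing the subgroup counts from Lemma \ref{lemma3.1}. Since $p \mid n$ we have $m = p \mid n$, so the hypothesis of Theorem \ref{theorem3} is met; moreover, as noted at the start of this section, every cyclic subgroup of $G = \mathbb{Z}_p \times \mathbb{Z}_n$ has order equal to some divisor $d_i$ of $n$. Theorem \ref{theorem3} therefore applies verbatim and yields $\psi(A(\mathscr{P}(G)),x) = (1+x)^{\alpha}\,\psi(Q,x)$ with $\alpha = \sum_{i=1}^{k} n_i(\phi(d_i)-1)$, where $n_i$ is the number of cyclic subgroups of $G$ of order $d_i$ and $Q$ is the quotient matrix with the stated block-row-sum entries. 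Thus everything reduces to identifying the $n_i$ and re-indexing $Q$.

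First I would read off the $n_i$ directly from the three cases of Lemma \ref{lemma3.1}. For $1 \leq i \leq r$ (the $d_i$ divisible by $p^2$) the lemma exhibits exactly the $p$ subgroups $\langle(0,\tfrac{n}{d_i})\rangle,\ldots,\langle(p-1,\tfrac{n}{d_i})\rangle$, so $n_i = p$; for $r+1 \leq i \leq r+t$ (the $d_i$ divisible by $p$ but not $p^2$) it exhibits those $p$ subgroups together with $\langle(1,\tfrac{np}{d_i})\rangle$, so $n_i = p+1$; and for $r+t+1 \leq i \leq k$ (the $d_i$ coprime to $p$) it exhibits the single subgroup $\langle(0,\tfrac{n}{d_i})\rangle$, so $n_i = 1$.

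Next I would substitute these three values into $\alpha = \sum_{i=1}^{k} n_i(\phi(d_i)-1)$ and split the sum at the indices $r$ and $r+t$, obtaining
\[
\alpha = p\sum_{i=1}^{r}(\phi(d_i)-1) + (p+1)\sum_{i=r+1}^{r+t}(\phi(d_i)-1) + \sum_{i=r+t+1}^{k}(\phi(d_i)-1),
\]
which is precisely the exponent claimed. For the quotient matrix, the explicit ordering $S_1, S_2, \ldots, S_l$ of all $l = pr + (p+1)t + (k-r-t)$ subgroups fixed just after Lemma \ref{lemma3.1} is exactly the indexing of the blocks of $A(\mathscr{P}(G))$ used to define $Q$ in Theorem \ref{theorem3}; rewriting each entry $q_{ij}$ in terms of the $S_i$, with the adjacency criterion $T_{S_i}\sim T_{S_j}$ supplied by Lemma \ref{lemma2.5}, reproduces the matrix in the statement without further change.

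The argument is entirely routine once Lemma \ref{lemma3.1} is in hand, so there is no genuine obstacle; the only point demanding care is that Lemma \ref{lemma3.1}'s enumeration be both complete and non-redundant, i.e.\ that every cyclic subgroup of $\mathbb{Z}_p \times \mathbb{Z}_n$ occurs exactly once and under the correct order $d_i$. Granting this (which the order-of-element count in the three cases of that lemma establishes), the substitution above closes the proof.
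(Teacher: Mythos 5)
Your proposal is correct and follows essentially the same route as the paper's own proof: specialize Theorem \ref{theorem3} to $m=p$, read the counts $n_i \in \{p,\, p+1,\, 1\}$ off the three cases of Lemma \ref{lemma3.1}, and split the sum for $\alpha$ accordingly, with the quotient matrix indexed by the fixed ordering $S_1,\ldots,S_l$. The paper's proof is just a terser version of exactly this argument.
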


\begin{proof}
    We have	$V(\mathscr{P}(\mathbb{Z}_p \times \mathbb{Z}_n)) = T_{S_1} \cup T_{S_2} \cup \ldots \cup T_{S_l} $.
	By Lemma \ref{lemma3.1}, number of cyclic subgroups of order $d_i$ is $ p$ for $1 \leq i \leq r$; $p+1$ for $r+1 \leq i \leq r+t$; $ 1$ for $r+t+1 \leq i \leq k$. 
    Then by Lemma \ref{lema2.5},  and Theorem \ref{theorem3}, we get the required result.
\end{proof}

\begin{re}\rm{
		Taking  $n=p^2$ yields Corollary $3.2$ of  \cite{ghorbani2018characteristic} as a particular case of Theorem  \ref{theorem3.1}.}
\end{re}

If  $p \nmid n$ or $q \nmid n$ then $\mathbb{Z}_{pq} \times \mathbb{Z}_n$ is isomorphic to  $\mathbb{Z}_{q} \times \mathbb{Z}_{np}$ or $\mathbb{Z}_{p} \times \mathbb{Z}_{nq}$ respectively, and this case is discussed in Theorem \ref{theorem3.1}. So in the next theorem, we consider $pq|n$. 

\begin{lemma}\label{lemma3.2}
	For distinct primes $p$ and $q$, let $pq|n$. Without loss of generality, in $S$, suppose the first $r_1$ elements  are relatively prime to $pq$,  next $r_2$ elements  are divisible by $p$ but not by $p^2$ and $q$, next $r_3$ (may be zero) elements  are divisible by $p^2$ but not by $q$, next $r_4$ elements  are divisible by $q$ but not by $q^2$ and $p$,  next $r_5$ (may be zero) elements  are divisible by $q^2$ but not by $p$, next $r_6$ elements are divisible by $pq$ but not by $p^2q$ and $pq^2$, next $r_7$ (may be zero) elements are divisible by $p^2q$ but not by $pq^2$, next $r_8$ (may be zero) elements are divisible by $pq^2$ but not by $p^2q$, and the remaining elements (may be zero) are multiple of $p^2q^2$. Then all possible cyclic subgroup of $\mathbb{Z}_{pq} \times \mathbb{Z}_n$ are as given below:
	${\langle(0,\frac{n}{d_i})\rangle} $  for $1 \leq i \leq r_1;$ $ {\langle(0,\frac{n}{d_i})\rangle} ,$ ${\langle(q,\frac{n}{d_i})\rangle},$ $ {\langle(2q,\frac{n}{d_i})\rangle},$ $ \ldots, {\langle(q(p-1),\frac{n}{d_i})\rangle},{\langle(q,\frac{np}{d_i})\rangle}$   for $r_1+1\leq i \leq \sum_{j=1} ^2 r_j\leq \sum_{j=1} ^3 r_j;$
	$ {\langle(0,\frac{n}{d_i})\rangle} ,$ ${\langle(q,\frac{n}{d_i})\rangle},$ $ {\langle(2q,\frac{n}{d_i})\rangle},$ $ \ldots, {\langle(q(p-1),\frac{n}{d_i})\rangle}$   for $\sum_{j=1} ^2 r_j +1 \leq i\leq \sum_{j=1} ^3 r_j;$ $ {\langle(0,\frac{n}{d_i})\rangle} ,$ ${\langle(p,\frac{n}{d_i})\rangle},$ $ {\langle(2p,\frac{n}{d_i})\rangle},$ $ \ldots, {\langle(p(q-1),\frac{n}{d_i})\rangle},{\langle(p,\frac{nq}{d_i})\rangle}$   for  $\sum_{j=1} ^3 r_j +1 \leq i \leq \sum_{j=1} ^4 r_j \leq \sum_{j=1} ^5 r_j;$ $ {\langle(0,\frac{n}{d_i})\rangle} ,$ ${\langle(p,\frac{n}{d_i})\rangle},$ $ {\langle(2p,\frac{n}{d_i})\rangle},$ $ \ldots, {\langle(p(q-1),\frac{n}{d_i})\rangle}$   for $\sum_{j=1} ^4 r_j +1 \leq i \leq \sum_{j=1} ^5 r_j;$ $ {\langle(0,\frac{n}{d_i})\rangle} ,$ ${\langle(1,\frac{n}{d_i})\rangle},$ $ {\langle(2,\frac{n}{d_i})\rangle},$ $ \ldots, {\langle(pq-1,\frac{n}{d_i})\rangle},$ ${\langle(1,\frac{np}{d_i})\rangle}, $ ${\langle(1,\frac{n2p}{d_i})\rangle},\ldots,{\langle(1,\frac{np(q-1)}{d_i})\rangle}, $ ${\langle(q,\frac{np}{d_i})\rangle},$ ${\langle(1,\frac{nq}{d_i})\rangle},$ ${\langle(1,\frac{n2q}{d_i})\rangle},$ $\ldots,$ ${\langle(1,\frac{n(p-1)q}{d_i})\rangle},  {\langle(p,\frac{nq}{d_i})\rangle}, {\langle(1,\frac{npq}{d_i})\rangle}$   for   $\sum_{j=1} ^5 r_j +1 \leq i \leq \sum_{j=1} ^6 r_j \leq k;$  $ {\langle(0,\frac{n}{d_i})\rangle} ,$ ${\langle(1,\frac{n}{d_i})\rangle},$ $ {\langle(2,\frac{n}{d_i})\rangle}, \ldots, {\langle(pq-1,\frac{n}{d_i})\rangle},$ ${\langle(p,\frac{nq}{d_i})\rangle},$ ${\langle(1,\frac{nq}{d_i})\rangle},$ ${\langle(2,\frac{nq}{d_i})\rangle},\ldots,$ ${\langle(p-1,\frac{nq}{d_i})\rangle}$   for  $\sum_{j=1} ^6 r_j < i \leq \sum_{j=1} ^7 r_j \leq k;$  $ {\langle(0,\frac{n}{d_i})\rangle} ,$ ${\langle(1,\frac{n}{d_i})\rangle},$ $ {\langle(2,\frac{n}{d_i})\rangle},$ $ \ldots, {\langle(pq-1,\frac{n}{d_i})\rangle},$ ${\langle(1,\frac{np}{d_i})\rangle},{\langle(1,\frac{n2p}{d_i})\rangle},\ldots,$ ${\langle(1,\frac{np(q-1)}{d_i})\rangle},$ ${\langle(q,\frac{np}{d_i})\rangle}$   for $\sum_{j=1} ^7 r_j $ $< i \leq \sum_{j=1} ^8 r_j \leq k;$ $ {\langle(0,\frac{n}{d_i})\rangle} ,$ ${\langle(1,\frac{n}{d_i})\rangle},$ $ {\langle(2,\frac{n}{d_i})\rangle},$ $ \ldots, {\langle(pq-1,\frac{n}{d_i})\rangle}$   for  $\sum_{j=1} ^8 r_j < i \leq k$.
\end{lemma}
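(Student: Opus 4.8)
The plan is to use the Chinese Remainder Theorem to split $\mathbb{Z}_{pq}\times\mathbb{Z}_n$ into primary components and then count the cyclic subgroups of each admissible order, mirroring the element-counting device used in the proof of Lemma \ref{lemma3.1}. Writing $n=p^aq^bm$ with $a,b\ge 1$ and $\gcd(m,pq)=1$, the isomorphism
\[
\mathbb{Z}_{pq}\times\mathbb{Z}_n \;\cong\; (\mathbb{Z}_p\times\mathbb{Z}_{p^a})\times(\mathbb{Z}_q\times\mathbb{Z}_{q^b})\times\mathbb{Z}_m
\]
lets me treat the three pairwise coprime pieces independently. First I would note that $\mathrm{lcm}(pq,n)=n$, so the order of every element, hence of every cyclic subgroup, divides $n$; thus each cyclic subgroup has order equal to some $d_i\in S$. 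This reduces the task to listing, for each fixed $d=d_i$, all cyclic subgroups of that order.

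For fixed $d$ I would count the cyclic subgroups of order $d$ as $N(d)/\phi(d)$, where $N(d)$ is the number of elements of order $d$. Since an element $(a,b)$ has order $\mathrm{lcm}(\mathrm{ord}(a),\mathrm{ord}(b))$ with $\mathrm{ord}(a)\in\{1,p,q,pq\}$ for $a\in\mathbb{Z}_{pq}$ and $\mathrm{ord}(b)=f$ a divisor of $n$,
\[
N(d)=\sum_{\substack{e\mid pq,\ f\mid n\\ \mathrm{lcm}(e,f)=d}}\phi(e)\,\phi(f),
\]
exactly as in Lemma \ref{lemma3.1}. Equivalently, via the displayed isomorphism a cyclic subgroup of order $d=p^{\alpha}q^{\beta}d'$ (with $d'\mid m$) is a triple consisting of a cyclic subgroup of order $p^{\alpha}$ in $\mathbb{Z}_p\times\mathbb{Z}_{p^a}$, one of order $q^{\beta}$ in $\mathbb{Z}_q\times\mathbb{Z}_{q^b}$, and the unique one of order $d'$ in $\mathbb{Z}_m$. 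A direct computation shows that $\mathbb{Z}_p\times\mathbb{Z}_{p^a}$ has $1$, $p+1$, or $p$ cyclic subgroups of order $p^{\alpha}$ according as $\alpha=0$, $\alpha=1$, or $\alpha\ge 2$, and symmetrically for $q$. Multiplying these across the two primes yields, case by case, the nine counts $1,\,p+1,\,p,\,q+1,\,q,\,(p+1)(q+1),\,p(q+1),\,(p+1)q,\,pq$, which match the lengths of the nine blocks in the statement (governed by $r_1,\dots,r_8$ and the remaining tail).

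Finally, I would exhibit the explicit generators and verify completeness. The second coordinate $\frac{n}{d_i}$ generates the unique order-$d_i$ subgroup of $\mathbb{Z}_n$, while the first coordinate supplies the mixing: $jq$ ranges over the order-$p$ subgroup $\langle q\rangle$ of $\mathbb{Z}_{pq}$, $jp$ over the order-$q$ subgroup $\langle p\rangle$, and full residues over all of $\mathbb{Z}_{pq}$. In each case I would check that the listed elements have order $d_i$ and generate pairwise distinct subgroups, so that together with the count above they exhaust all cyclic subgroups of order $d_i$. The main obstacle I expect is the bookkeeping in the doubly mixed case $r_6$ (where $p\parallel d_i$ and $q\parallel d_i$), in which there are $(p+1)(q+1)$ subgroups and one must confirm that generators such as $\langle(1,\frac{np}{d_i})\rangle$, $\langle(q,\frac{np}{d_i})\rangle$, $\langle(1,\frac{nq}{d_i})\rangle$, $\langle(p,\frac{nq}{d_i})\rangle$ and $\langle(1,\frac{npq}{d_i})\rangle$ lie in distinct $\rho$-classes, i.e.\ that none is an integer power of another. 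Reading these off as distinct lines in the $p$- and $q$-torsion under the CRT identification makes the distinctness transparent and avoids ad hoc power computations.
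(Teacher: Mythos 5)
Your proposal is correct, and the underlying counting device is the same as the paper's: for each admissible order $d$ you count elements of that order as $\sum_{\mathrm{lcm}(e,f)=d}\phi(e)\phi(f)$ over divisors $e\mid pq$, $f\mid n$, divide by $\phi(d)$ to get the number of cyclic subgroups, and then exhibit generators. Where you genuinely diverge is in how the nine counts are evaluated. The paper runs a flat case analysis on $\gcd(pq,d_i)$, $\gcd(pq,d_i/p)$, $\gcd(pq,d_i/q)$ and $\gcd(pq,d_i/(pq))$, writing out each sum $\phi(1)\phi(d_i)+\phi(p)\phi(d_i)+\cdots$ by hand in every subcase; you instead pass through the CRT factorization $\mathbb{Z}_{pq}\times\mathbb{Z}_n\cong(\mathbb{Z}_p\times\mathbb{Z}_{p^a})\times(\mathbb{Z}_q\times\mathbb{Z}_{q^b})\times\mathbb{Z}_m$ and use multiplicativity of the cyclic-subgroup count over coprime factors, so that all nine values arise as products of the three per-prime counts $1$, $p+1$, $p$ (resp.\ $1$, $q+1$, $q$) according to whether $p^0$, $p^1$ or $p^{\ge 2}$ exactly divides $d_i$. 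This buys a uniform verification that $pq+p+q+1=(p+1)(q+1)$, $pq+p=p(q+1)$, $pq+q=(p+1)q$, and it also makes the distinctness of the listed generators in the $(p+1)(q+1)$-subgroup case transparent (distinct lines in the $p$- and $q$-torsion), which the paper asserts without checking. The paper's approach is more self-contained in that it never invokes the product decomposition, but your route is cleaner and scales better; indeed the paper's own remark about extending to $m=p_1\cdots p_r$ is essentially your multiplicativity observation. The only things left to execute in your plan are routine: confirming that each displayed generator has the claimed order and that the listed subgroups are pairwise distinct, both of which follow directly from your CRT reading.
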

\begin{proof}
	As $pq|n$, the order of an element  of $ \mathbb{Z}_{pq} \times \mathbb{Z}_{n}$  is a divisor of $n$. Then we have  the following cases depending on the values of $r_1,r_2,r_3,r_4,r_5,r_6,r_7$ and $r_8$.\\
	\textbf{Case:1} We have at least one $d_i$ which is relatively prime to $pq$. Let   $gcd(pq,d_i)=1$ for  $1 \leq i \leq r_1$. Then the number of cyclic subgroup of  $G$  of order $d_i$   is $\frac{(\phi(1)\phi(d_i))}{\phi(d_i)}=1$. This cyclic subgroup is ${\langle(0,\frac{n}{d_i})\rangle} $  for  $1 \leq i \leq r_1$.\\ 
	\textbf{Case:2} Since $p|n$, there exists at least one $d_i$ such that $p|d_i$ but  $pq \nmid d_i$. Let $gcd(pq,d_i) =p$ for $r_1+1\leq i \leq \sum_{j=1} ^3 r_j \leq k$. Then the following two subcases arise depending upon $p^2|d_i$ or $p^2 \nmid d_i$:\\
	\textbf{Subcase:2(a)} If $gcd(pq,\frac{d_i}{p})=1$ for $r_1+1\leq i \leq \sum_{j=1} ^2 r_j\leq \sum_{j=1} ^3 r_j$, then
	the number of cyclic subgroups of $G$ of order $d_i$   are $\frac{\phi(1)\phi(d_i)+\phi(p)\phi(d_i)+\phi(p)\phi(\frac{d_i}{p})}{\phi(d_i)}=(p+1)$. These $(p+1)$ cyclic subgroups are $ {\langle(0,\frac{n}{d_i})\rangle} ,$ ${\langle(q,\frac{n}{d_i})\rangle},$ $ {\langle(2q,\frac{n}{d_i})\rangle},$ $ \ldots, {\langle(q(p-1),\frac{n}{d_i})\rangle},{\langle(q,\frac{np}{d_i})\rangle}$   for $r_1+1\leq i \leq \sum_{j=1} ^2 r_j\leq \sum_{j=1} ^3 r_j$. \\ 
	\textbf{Subcase:2(b)} If $\sum_{j=1} ^2 r_j < \sum_{j=1} ^3 r_j$ then there exists at least one $d_i$ such that $p^2|d_i$. Let  $gcd(pq,\frac{d_i}{p})=p$ for $\sum_{j=1} ^2 r_j +1 \leq i\leq \sum_{j=1} ^3 r_j$. Then
	the number of cyclic subgroups of  $G$ of order $d_i$  are $\frac{\phi(1)\phi(d_i)+\phi(p)\phi(d_i)}{\phi(d_i)}=p$. These $p$ cyclic subgroups are $ {\langle(0,\frac{n}{d_i})\rangle} ,$ ${\langle(q,\frac{n}{d_i})\rangle},$ $ {\langle(2q,\frac{n}{d_i})\rangle},$ $ \ldots, {\langle(q(p-1),\frac{n}{d_i})\rangle}$   for   $\sum_{j=1} ^2 r_j +1 \leq i\leq \sum_{j=1} ^3 r_j$. \\ 
	\textbf{Case:3}  Since $q|n$, there exists at least one $d_i$ such that $q|d_i$ but not by $pq$. Let $gcd(pq,d_i) =q$ for  $\sum_{j=1} ^3 r_j +1 \leq i \leq \sum_{j=1} ^5 r_j$. Then the following two subcases arise depending upon $q^2|d_i$ or $q^2 \nmid d_i$:\\
	\textbf{Subcase:3(a)} If $gcd(pq,\frac{d_i}{q})=1$ for  $\sum_{j=1} ^3 r_j +1 \leq i \leq  \sum_{j=1} ^4 r_j \leq \sum_{j=1} ^5 r_j$, then
	the number of cyclic subgroups of $G$ of order $d_i$  are $\frac{\phi(1)\phi(d_i)+\phi(q)\phi(d_i)+\phi(q)\phi(\frac{d_i}{q})}{\phi(d_i)}=(q+1)$. These $(q+1)$ cyclic subgroups are $ {\langle(0,\frac{n}{d_i})\rangle} ,$ ${\langle(p,\frac{n}{d_i})\rangle},$ $ {\langle(2p,\frac{n}{d_i})\rangle},$ $ \ldots, {\langle(p(q-1),\frac{n}{d_i})\rangle},{\langle(p,\frac{nq}{d_i})\rangle}$   for   $\sum_{j=1} ^3 r_j +1 \leq i \leq \sum_{j=1} ^4 r_j \leq \sum_{j=1} ^5 r_j$. \\ 
	\textbf{Subcase:3(b)} If $\sum_{j=1} ^4 r_j < \sum_{j=1} ^5 r_j$ then there exists at least one $d_i$ such that $q^2|d_i$. Let $gcd(pq,\frac{d_i}{q})=q$ for $\sum_{j=1} ^4 r_j +1 \leq i \leq \sum_{j=1} ^5 r_j$. Then
	the number of cyclic subgroups of $G$ of order $d_i$   are $\frac{\phi(1)\phi(d_i)+\phi(q)\phi(d_i)}{\phi(d_i)}=q$. These $q$ cyclic subgroups are $ {\langle(0,\frac{n}{d_i})\rangle} ,$ ${\langle(p,\frac{n}{d_i})\rangle},$ $ {\langle(2p,\frac{n}{d_i})\rangle},$ $ \ldots, {\langle(p(q-1),\frac{n}{d_i})\rangle}$   for  $\sum_{j=1} ^4 r_j +1 \leq i \leq \sum_{j=1} ^5 r_j$. \\ 
	\textbf{Case:4} Since $pq|n$, there exists at least one $d_i$ such that $pq|d_i$. Let $gcd(pq,d_i) =pq$ for  $\sum_{j=1} ^5 r_j+1\leq i \leq k$. Then  there are four subcases arise:\\
	\textbf{Subcase:4(a)} Since $pq|d_i$, there exists at least one $d_i$   such that $gcd(pq,\frac{d_i}{pq})=1$ i.e $d_i = pq$. Let  $gcd(pq,\frac{d_i}{pq})=1$ for  $\sum_{j=1} ^5 r_j +1 \leq i \leq \sum_{j=1} ^6 r_j \leq k$. Then
	the number of cyclic subgroups of $G$ of order $d_i$   are $\frac{\phi(1)\phi(d_i)+\phi(p)\phi(d_i)+\phi(q)\phi(d_i)}{\phi(d_i)}+\frac{\phi(pq)\phi(d_i)+ \phi(pq)\phi(\frac{d_i}{p})+\phi(pq)\phi(\frac{d_i}{q})+\phi(pq)\phi(\frac{d_i}{pq})+\phi(p)\phi(\frac{d_i}{p})+\phi(q)\phi(\frac{d_i}{q})}{\phi(d_i)}=pq+p+q+1$. These $(pq+p+q+1)$ cyclic subgroups are $ {\langle(0,\frac{n}{d_i})\rangle} ,$ ${\langle(1,\frac{n}{d_i})\rangle},$ $ {\langle(2,\frac{n}{d_i})\rangle},$ $ \ldots, {\langle(pq-1,\frac{n}{d_i})\rangle},$ ${\langle(1,\frac{np}{d_i})\rangle}, $ ${\langle(1,\frac{n2p}{d_i})\rangle},\ldots,{\langle(1,\frac{np(q-1)}{d_i})\rangle}, $ ${\langle(q,\frac{np}{d_i})\rangle},$ ${\langle(1,\frac{nq}{d_i})\rangle},$ ${\langle(1,\frac{n2q}{d_i})\rangle},\ldots,$ ${\langle(1,\frac{n(p-1)q}{d_i})\rangle}, $ ${\langle(p,\frac{nq}{d_i})\rangle},$ ${\langle(1,\frac{npq}{d_i})\rangle}$   for   $\sum_{j=1} ^5 r_j +1 \leq i \leq \sum_{j=1} ^6 r_j \leq k$. \\
	If $\sum_{j=1} ^6 r_j < k$ then these three subcases may occur:\\
	\textbf{Subcase:4(b)} If   $gcd(pq,\frac{d_i}{pq})=p$ for $\sum_{j=1} ^6 r_j < i \leq \sum_{j=1} ^7 r_j \leq k$, then	the number of cyclic subgroups of  $G$ of order $d_i$   are $\frac{\phi(1)\phi(d_i)+\phi(p)\phi(d_i)+\phi(q)\phi(d_i)+\phi(pq)\phi(d_i)+\phi(q)\phi(\frac{d_i}{q})+\phi(pq)\phi(\frac{d_i}{q})}{\phi(d_i)}=pq+p$. These $(pq+p)$ cyclic subgroups are $ {\langle(0,\frac{n}{d_i})\rangle} ,$ ${\langle(1,\frac{n}{d_i})\rangle},$ $ {\langle(2,\frac{n}{d_i})\rangle},$ $ \ldots, {\langle(pq-1,\frac{n}{d_i})\rangle},{\langle(p,\frac{nq}{d_i})\rangle},{\langle(1,\frac{nq}{d_i})\rangle},$ ${\langle(2,\frac{nq}{d_i})\rangle},\ldots,$ ${\langle(p-1,\frac{nq}{d_i})\rangle}$   for  $\sum_{j=1} ^6 r_j < i \leq \sum_{j=1} ^7 r_j \leq k$.\\ 
	\textbf{Subcase:4(c)} If $gcd(pq,\frac{d_i}{pq})=q$ for $\sum_{j=1} ^7 r_j +1<  i \leq \sum_{j=1} ^8 r_j \leq k$, then
	the number of cyclic subgroups of $G$ of order $d_i$   are $\frac{\phi(1)\phi(d_i)+\phi(p)\phi(d_i)+\phi(q)\phi(d_i)+\phi(pq)\phi(d_i)+\phi(p)\phi(\frac{d_i}{p})+\phi(pq)\phi(\frac{d_i}{p})}{\phi(d_i)}=pq+q$. These $(pq+q)$ cyclic subgroups are $ {\langle(0,\frac{n}{d_i})\rangle} ,$ ${\langle(1,\frac{n}{d_i})\rangle},$ $ {\langle(2,\frac{n}{d_i})\rangle},$ $ \ldots, {\langle(pq-1,\frac{n}{d_i})\rangle},{\langle(1,\frac{np}{d_i})\rangle},{\langle(1,\frac{n2p}{d_i})\rangle},$ $\ldots,{\langle(1,\frac{np(q-1)}{d_i})\rangle},$ ${\langle(q,\frac{np}{d_i})\rangle}$   for  $\sum_{j=1} ^7 r_j < i \leq \sum_{j=1} ^8 r_j \leq k$. \\ 
	\textbf{Subcase:4(d)} If $gcd(pq,\frac{d_i}{pq})=pq$  for $\sum_{j=1} ^8 r_j < i \leq k$, then
	the number of cyclic subgroups of $G$ of order $d_i$  are $\frac{\phi(1)\phi(d_i)+\phi(p)\phi(d_i)+\phi(q)\phi(d_i)+\phi(pq)\phi(d_i)}{\phi(d_i)}=pq$. These $pq$ cyclic subgroups are $ {\langle(0,\frac{n}{d_i})\rangle} ,$ ${\langle(1,\frac{n}{d_i})\rangle},$ $ {\langle(2,\frac{n}{d_i})\rangle},$ $ \ldots, {\langle(pq-1,\frac{n}{d_i})\rangle}$   for  $\sum_{j=1} ^8 r_j < i \leq k$.
\end{proof}

 By Lemma \ref{lemma3.2}, there are total $r_1+(p+1)r_2+pr_3+(q+1)r_4+qr_5+(pq+p+q+1)r_6+(pq+p)r_7+(pq+q)r_8+pq(k-\sum_{j=1} ^{8}r_j)= l(say)$ cyclic subgroups of $\mathbb{Z}_{pq} \times \mathbb{Z}_n$. We list all these  in the order of their occurrence  in  the statement of Lemma \ref{lemma3.2}, that is $S_1 =  \langle(0,\frac{n}{d_1})\rangle$, $S_2 = \langle(0,\frac{n}{d_2})\rangle,\ldots,S_{l}=\langle(pq-1,\frac{n}{d_k})\rangle$.

\begin{theorem}\label{theorem3.2}
	Let $G= \mathbb{Z}_{pq} \times \mathbb{Z}_n$ with $pq|n$ and $r_1,r_2,r_3,r_4,r_5,r_6,r_7 ,r_8$ as given in Lemma \ref{lemma3.2}.
	Then,
	\begin{equation*}
		\psi(A(\mathscr{P}(G)),x)= (1+x)^{\alpha}~ \psi(Q,x)
	\end{equation*}
$ \alpha = \sum_{i=1} ^{t_1} (\phi(d_i)-1) +(p+1)\sum_{i=t_1+1} ^{t_2} (\phi(d_i)-1) +p\sum_{i=t_2+1} ^{t_3} (\phi(d_i)-1)+$ $ (q+1)\sum_{i=t_3+1} ^{t_4} (\phi(d_i)-1)$ $+ q\sum_{i=t_4+1} ^{t_5} (\phi(d_i)-1)$ $+(pq+p+q+1) \sum_{i=t_5+1} ^{t_6} (\phi(d_i)-1)+$ $(pq+p)\sum_{i=t_6+1} ^{t_7} (\phi(d_i)-1) +(pq+q)\sum_{i=t_7+1} ^{t_8} (\phi(d_i)-1) $ $+pq\sum_{i=t_8+1} ^k (\phi(d_i)-1) $ 
, and $t_1 =r_1, t_2 = \sum_{j=1} ^{2} r_j, t_3 = \sum_{j=1} ^{3} r_j, t_4 = \sum_{j=1} ^{4} r_j, t_5= \sum_{j=1} ^{5} r_j, t_6 = \sum_{j=1} ^{6} r_j, t_7 = \sum_{j=1} ^{7} r_j, t_8 = \sum_{j=1} ^{8} r_j$.
	 and  $Q=(q_{ij})_{l \times l}$ with
		\begin{equation*} 
		q_{ij}= \begin{cases}
			\phi(|S_j|) & \text{if~} i \neq j \text{~and~} T_{S_i} \sim T_{S_j} \\
			\phi(|S_i|)-1 & \text{if~}  i = j\\
			0 & \text{otherwise}\\
		\end{cases}
	\end{equation*} 
   
\end{theorem}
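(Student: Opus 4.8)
The plan is to obtain Theorem \ref{theorem3.2} as a direct specialization of Theorem \ref{theorem3}, with the combinatorial input supplied entirely by the subgroup enumeration in Lemma \ref{lemma3.2}. Since we assume $pq\mid n$, in particular $m=pq$ divides $n$, so the hypothesis of Theorem \ref{theorem3} is met and every element of $G=\mathbb{Z}_{pq}\times\mathbb{Z}_n$ has order dividing $n$; hence the set of orders of cyclic subgroups of $G$ is exactly $S=\{d_1,\dots,d_k\}$. Theorem \ref{theorem3} then already gives $\psi(A(\mathscr{P}(G)),x)=(1+x)^{\alpha}\,\psi(Q,x)$ with $\alpha=\sum_{i=1}^{k}n_i(\phi(d_i)-1)$, where $n_i$ is the number of cyclic subgroups of $G$ of order $d_i$, and with $Q$ the quotient matrix of the partition of $V(\mathscr{P}(G))$ into generator sets. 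So the entire task reduces to reading off the values $n_i$ and verifying that the resulting exponent matches the stated $\alpha$.

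The key steps I would carry out are as follows. First, I would invoke Lemma \ref{lemma3.2} to record $n_i$ according to the divisibility type of $d_i$: writing the cumulative sums $t_j=\sum_{s=1}^{j}r_s$, the lemma gives $n_i=1$ for $1\le i\le t_1$, $n_i=p+1$ for $t_1<i\le t_2$, $n_i=p$ for $t_2<i\le t_3$, $n_i=q+1$ for $t_3<i\le t_4$, $n_i=q$ for $t_4<i\le t_5$, $n_i=pq+p+q+1$ for $t_5<i\le t_6$, $n_i=pq+p$ for $t_6<i\le t_7$, $n_i=pq+q$ for $t_7<i\le t_8$, and $n_i=pq$ for $t_8<i\le k$. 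Second, I would substitute these counts into $\alpha=\sum_{i=1}^{k}n_i(\phi(d_i)-1)$ and split the single sum over $i=1,\dots,k$ into the nine consecutive blocks determined by $t_1,\dots,t_8$. Factoring the constant $n_i$ out of each block yields precisely the nine-term expression for $\alpha$ displayed in the statement, which completes the identification of the exponent.

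Finally, for the matrix $Q$ I would note that Theorem \ref{theorem3} already delivers the quotient matrix of the equitable partition (Lemma \ref{lemma2.3}), whose diagonal entries are $\phi(|S_i|)-1$ by Lemma \ref{lema2.3} and whose off-diagonal entries equal $\phi(|S_j|)$ exactly when $T_{S_i}\sim T_{S_j}$ and $0$ otherwise; by Lemma \ref{lemma2.5} the adjacency $T_{S_i}\sim T_{S_j}$ is governed by containment of the corresponding cyclic subgroups. Listing the $l=r_1+(p+1)r_2+pr_3+(q+1)r_4+qr_5+(pq+p+q+1)r_6+(pq+p)r_7+(pq+q)r_8+pq\,(k-t_8)$ subgroups in the order $S_1,\dots,S_l$ fixed after Lemma \ref{lemma3.2} gives $Q=(q_{ij})_{l\times l}$ in the stated form. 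The main point to be careful about is the bookkeeping: confirming that the ranges $t_1,\dots,t_8$ genuinely partition $\{1,\dots,k\}$ and that the regrouping of the sum is exact. There is no real analytic obstacle here, since the substantive work—the complete classification of cyclic subgroups and their multiplicities—has already been done in Lemma \ref{lemma3.2}; the proof is essentially its assembly with Theorem \ref{theorem3}.
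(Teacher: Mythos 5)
Your proposal is correct and follows exactly the paper's own route: it specializes Theorem \ref{theorem3} (which applies since $pq\mid n$ implies $m\mid n$), reads the multiplicities $n_i$ of cyclic subgroups of each order $d_i$ from Lemma \ref{lemma3.2}, and regroups $\alpha=\sum_{i=1}^{k}n_i(\phi(d_i)-1)$ into the nine blocks determined by $t_1,\dots,t_8$, with $Q$ supplied by the equitable partition of Lemma \ref{lemma2.3} and Lemma \ref{lema2.5}. No discrepancies with the paper's argument.
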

\begin{proof}
We have  $
V(\mathscr{P}(\mathbb{Z}_{pq} \times \mathbb{Z}_{n}))= T_{S_1} \cup T_{S_2} \cup \ldots \cup T_{S_l}$.
By Lemma \ref{lemma3.2}, number of cyclic subgroups of order $d_i$ is $1$ for $1 \leq i \leq t_1$; $(p+1)$ for $t_1+1 \leq i \leq t_2$; $ p$ for $t_2 +1 \leq i \leq t_3$; $(q+1)$ for $t_3+1 \leq i \leq t_4$; $ q$ for $t_4+1 \leq i \leq t_5$; $ (pq+p+q+1)$ for $t_5+1 \leq i \leq t_6$; $ pq+p$ for $t_6+1 \leq i \leq t_7$; $ pq+q$ for $t_7+1 \leq i \leq t_8$; $ pq$ for $t_8+1 \leq i \leq k$.
Then by  Lemma \ref{lema2.5} and Theorem \ref{theorem3}, we get the required result.

\end{proof}

\begin{re}{\rm Theorem \ref{theorem3.2} can be extended to the case $\mathbb{Z}_m \times \mathbb{Z}_n$ when $m$ is product of distinct primes.
		Let $m = p_1p_2\ldots p_r$. Then the divisors of $m$ are in order  $1$, $p_i$, $p_i p_j (1 \leq i < j \leq r)$, $p_ip_jp_l (1 \leq i < j < l \leq r), \ldots,$ $ p_1p_2\ldots p_r$. We name these divisors as  $\xi_1,$ $\xi_2,$ $\ldots,$ $\xi_{2^r}.$ Analogous to Lemma \ref{lemma3.3}, we will have a lemma where there will be $2^r$ cases, one corresponding to each $\xi_i$, $1 \leq i \leq 2^r$. If $\xi_i = p_{i_1}p_{i_2}...p_{i_t}$ then the case corresponding to $\xi_i$ will have $2^t$ subcases. Number  of cyclic subgroups in a subcase corresponding to $\xi_i$ can be determined by the number of possible arrangement of divisors of $\xi_i$ and $d_j$(some divisor of $n$) such that there least common multiple is $d_j$. Finally we will have a theorem analogous to Theorem \ref{theorem3.2}.\\
		
	}	
\end{re}

If $p \nmid n$ then $Z_{p^2} \times Z_n \cong Z_{p^2n}=Z_{n_1}$ for some $n_1 \in \mathbb{N}$, and  this case is discussed in  \cite{mehranian2017spectra}. Thus we consider the case  $p^2|n$.

\begin{lemma}\label{lemma3.3}
    For prime $p$, let $p^2|n$. Without loss of generality, in $S$, suppose the first $r_1$ elements  are divisible by $p^3$,  the next $r_2$  elements are divisible by $p^2$ but not by $p^3$,  the next $r_3$ elements  are divisible by $p$ but not by $p^2$, and the rest of the elements are relatively prime to $p^2$. All possible cyclic subgroups of $\mathbb{Z}_{p^2} \times \mathbb{Z}_{n}$ are as given below:
    ${\langle(0,\frac{n}{d_i})\rangle},$ ${\langle(1,\frac{n}{d_i})\rangle},$ $ {\langle(2,\frac{n}{d_i})\rangle}, \ldots,$ $ {\langle(p^2-1,\frac{n}{d_i})\rangle}$  for  $1 \leq i \leq r_1$; 
    ${\langle(0,\frac{n}{d_i})\rangle},$ ${\langle(1,\frac{n}{d_i})\rangle},$ $ {\langle(2,\frac{n}{d_i})\rangle},$ $ \ldots,$ $ {\langle(p^2-1,\frac{n}{d_i})\rangle},$ $ {\langle(1,\frac{np}{d_i})\rangle},$ ${\langle(2,\frac{np}{d_i})\rangle},$ $\ldots $ $  {\langle((p-1),\frac{np}{d_i})\rangle},$ ${\langle(1,\frac{np^2}{d_i})\rangle} $  for  $r_1+1\leq i \leq r_1+r_2$; $ {\langle(0,\frac{n}{d_i})\rangle} ,$ ${\langle(p,\frac{n}{d_i})\rangle},$ $ {\langle(2p,\frac{n}{d_i})\rangle},$ $ \ldots, {\langle(p(p-1),\frac{n}{d_i})\rangle},{\langle(p,\frac{np}{d_i})\rangle}$   for $r_1+r_2+1 \leq i \leq r_1+r_2+r_3$; ${\langle(0,\frac{n}{d_i})\rangle} $  for $r_1+r_2+r_3+1 \leq i \leq k$. 
\end{lemma}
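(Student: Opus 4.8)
The plan is, for each divisor $d_i$ of $n$, to count the cyclic subgroups of $\mathbb{Z}_{p^2}\times\mathbb{Z}_n$ of order $d_i$ and then to exhibit explicit generators realising exactly that many subgroups. First I would observe that $p^2\mid n$ gives $\mathrm{lcm}(p^2,n)=n$, so the order of any $(a,b)$, being $\mathrm{lcm}(\mathrm{ord}(a),\mathrm{ord}(b))$ with $\mathrm{ord}(a)\mid p^2$ and $\mathrm{ord}(b)\mid n$, divides $n$; hence every cyclic subgroup has order in $S$, and the four valuation classes $p^3\mid d_i$, $p^2\mid d_i$ (but $p^3\nmid d_i$), $p\mid d_i$ (but $p^2\nmid d_i$), and $p\nmid d_i$ exhaust all possibilities. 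The counting tool is the same as in Lemmas~\ref{lemma3.1} and~\ref{lemma3.2}: a cyclic subgroup of order $d$ has $\phi(d)$ generators, and the number of elements of order $d_i$ is $\sum_{\mathrm{lcm}(e,f)=d_i}\phi(e)\phi(f)$ (sum over $e\mid p^2$ and $f\mid n$), so the number of cyclic subgroups of order $d_i$ is $\tfrac{1}{\phi(d_i)}\sum_{\mathrm{lcm}(e,f)=d_i}\phi(e)\phi(f)$.

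Next I would evaluate this sum in each valuation class by listing the admissible pairs $(e,f)$ with $e\in\{1,p,p^2\}$. If $v_p(d_i)\ge 3$ then $v_p(e)\le 2$ forces $f=d_i$, and the count is $\phi(1)+\phi(p)+\phi(p^2)=p^2$. If $v_p(d_i)=2$ the pairs are $(1,d_i),(p,d_i),(p^2,d_i),(p^2,d_i/p),(p^2,d_i/p^2)$, giving $p^2+p$. If $v_p(d_i)=1$ they are $(1,d_i),(p,d_i),(p,d_i/p)$, giving $p+1$. If $p\nmid d_i$ only $(1,d_i)$ survives, giving $1$. These totals are exactly the lengths of the four lists in the statement.

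It then remains to see that the displayed generators realise precisely these subgroups. The second coordinates $n/d_i,\ np/d_i,\ np^2/d_i$ generate subgroups of $\mathbb{Z}_n$ of the distinct orders $d_i,\ d_i/p,\ d_i/p^2$, so the corresponding families have distinct $\mathbb{Z}_n$-projections and are pairwise disjoint. Inside a family with second coordinate $b$ of order $\omega$, I would first keep only those $j$ for which $\mathrm{lcm}(\mathrm{ord}(j),\omega)=d_i$, and then identify equal subgroups using the criterion that $\langle(j,b)\rangle=\langle(j',b)\rangle$ iff $j'\equiv kj\pmod{p^2}$ for some $k$ invertible modulo $p^2$ with $k\equiv 1\pmod{p^{v_p(\omega)}}$ (so $k\equiv 1\pmod{p^2}$ once $v_p(\omega)\ge 2$), the prime-to-$p$ congruence on $k$ imposing no condition modulo $p^2$ by the Chinese Remainder Theorem. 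The distinct subgroups of the family are then the orbits of this multiplier action.

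The hard part will be this orbit count — the real content beyond the counts of the second paragraph — since it is what makes the three coexisting families in the case $v_p(d_i)=2$ contribute $p^2$, $p-1$, and $1$ subgroups. For $b$ of order $d_i$ every $j\in\{0,\ldots,p^2-1\}$ is admissible and the multiplier forces $k\equiv 1\pmod{p^2}$, so all $p^2$ choices give distinct subgroups; for $b$ of order $d_i/p$ the admissible $j$ are the units modulo $p^2$ and $\langle(j,b)\rangle=\langle(j',b)\rangle$ exactly when $j\equiv j'\pmod p$, giving $p-1$; and for $b$ of order $d_i/p^2$ the admissible units are permuted transitively by the now unrestricted unit multiplier, so they fuse into a single subgroup. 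The cases $v_p(d_i)=1$ and $v_p(d_i)\ge 3$ run the same way with fewer families; in particular, when $v_p(d_i)=1$ the family $b=n/d_i$ admits only the $p$ multiples of $p$ (each its own subgroup) while $b=np/d_i$ fuses its admissible elements into one. Thus the principal care is simply to keep the admissibility restriction on $j$ aligned with the multiplier restriction on $k$ in every subcase, which is exactly what produces the collapse counts recorded in the statement.
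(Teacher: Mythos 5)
Your proposal is correct and follows essentially the same route as the paper's proof: classify each divisor $d_i$ by its $p$-adic valuation, count the elements of order $d_i$ as $\sum\phi(e)\phi(f)$ over pairs with $\operatorname{lcm}(e,f)=d_i$, and divide by $\phi(d_i)$ to get the counts $p^2$, $p^2+p$, $p+1$, $1$. Your third and fourth paragraphs (the multiplier-orbit argument showing the listed generators yield pairwise distinct subgroups realising exactly these counts) supply a verification that the paper merely asserts by exhibiting the list, so your write-up is if anything more complete than the original.
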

    \begin{proof}   As $p^2|n$, the order of all the elements of the group $G$ are the  divisors of $n$. There may not be any $d_i$ which is divisible by $p^3$, and so   $r_1$ may be zero. Since $p^2|n$, there exists at least one $d_i$ such that $p^2|d_i$ i.e. $r_2 \geq 1$. Since $p|n$, there exists at least one $d_i$ such that $p|d_i$ i.e. $r_3 \geq 1$.  And we have at least one $d_i$ which is relatively prime to $n$. We have following cases depending on values of $r_1,$ $r_2$ and $r_3$.  \\
	\textbf{Case:1} If $r_1 \geq 1$, then $d_1,d_2,\ldots,d_{r_1}$ are divisible by $p^3$. So, $gcd(p^2,d_i)=p^2$ and $gcd(p^2,\frac{d_i}{p})=p^2$ for  $1 \leq i \leq r_1$. The number of cyclic subgroups of  $G$ of order $d_i$  are  $\frac{\phi(1)\phi(d_i)+\phi(p)\phi(d_i)+ \phi(p^2)\phi(d_i)}{\phi(d_i)}= p^2$. These $p^2$ subgroups are  ${\langle(0,\frac{n}{d_i})\rangle},$ ${\langle(1,\frac{n}{d_i})\rangle},$ $ {\langle(2,\frac{n}{d_i})\rangle}, \ldots,$ $ {\langle(p^2-1,\frac{n}{d_i})\rangle}  $  for  $1 \leq i \leq r_1$. \\ 
	\textbf{Case:2} Here $d_{r_1+1},d_{r_1+2},\ldots,d_{r_1+r_2}$ are divisible by $p^2$ but not by $p^3$. So, $gcd(p^2,d_i)=p^2$ and $gcd(p^2,\frac{d_i}{p})=p$ for $r_1+1\leq i \leq r_1+r_2$. The number of cyclic subgroups of  $G$ of  order $d_i$ are  $\frac{\phi(1)\phi(d_i)+\phi(p)\phi(d_i)+ \phi(p^2)\phi(d_i)}{\phi(d_i)}$ $+$ $ \frac{\phi(p^2)\phi(\frac{d_i}{p})+\phi(p^2)\phi(\frac{d_i}{p^2})}{\phi(d_i)}= (p^2+p)$. These $(p^2+p)$ subgroups are  ${\langle(0,\frac{n}{d_i})\rangle},$ ${\langle(1,\frac{n}{d_i})\rangle},$ $ {\langle(2,\frac{n}{d_i})\rangle},$ $ \ldots,$ $ {\langle(p^2-1,\frac{n}{d_i})\rangle},$ $ {\langle(1,\frac{np}{d_i})\rangle},$ ${\langle(2,\frac{np}{d_i})\rangle},$ $\ldots $ $  {\langle((p-1),\frac{np}{d_i})\rangle},$ ${\langle(1,\frac{np^2}{d_i})\rangle} $  for $r_1+1\leq i \leq r_1+r_2$. \\ 
	\textbf{Case:3}  Here $d_{r_1+r_2+1},d_{r_1+r_2+2},\ldots,d_{r_1+r_2+r_3}$ are divisible by $p$ but not by $p^2$. So, $gcd(p^2,d_i) =p$ for  $r_1+r_2+1 \leq i \leq r_1+r_2+r_3$. The number of cyclic subgroups of  $G$ of order $d_i$  are $\frac{\phi(1)\phi(d_i)+\phi(p)\phi(d_i)+\phi(p)\phi(\frac{d_i}{p})}{\phi(d_i)}=(p+1)$. These $(p+1)$ cyclic subgroups are $ {\langle(0,\frac{n}{d_i})\rangle} ,$ ${\langle(p,\frac{n}{d_i})\rangle},$ $ {\langle(2p,\frac{n}{d_i})\rangle},$ $ \ldots, {\langle(p(p-1),\frac{n}{d_i})\rangle},{\langle(p,\frac{np}{d_i})\rangle}$   for  $r_1+r_2+1 \leq i \leq r_1+r_2+r_3$. \\
	\textbf{Case:4}   Here $d_{r_1+r_2+r_3+1},d_{r_1+r_2+r_3+2},\ldots,d_{k}$ are relatively prime to $p$. So, $gcd(p^2,d_i)=1$ for $r_1+r_2+r_3+1 \leq i \leq k$. The number of cyclic subgroup of $G$ of order $d_i$   is $\frac{(\phi(1)\phi(d_i))}{\phi(d_i)}=1$. This cyclic subgroup is ${\langle(0,\frac{n}{d_i})\rangle} $  for   $r_1+r_2+r_3+1 \leq i \leq k$.
\end{proof} 

 By Lemma \ref{lemma3.3}, there are total $p^2r_1+(p^2+p)r_2+(p+1)r_3+(k-r_1-r_2-r_3)= l(say)$ cyclic subgroups of $\mathbb{Z}_p \times \mathbb{Z}_n$. 
 We list all these  in the order of their occurrence  in  the statement of Lemma \ref{lemma3.3}, that is $S_1 =  \langle(0,\frac{n}{d_1})\rangle$, $S_2 = \langle(1,\frac{n}{d_1})\rangle,\ldots,S_{l}=\langle(0,\frac{n}{d_k})\rangle$.

\begin{theorem}\label{theorem3.3}
	Let $G= \mathbb{Z}_{p^2} \times \mathbb{Z}_n$ with $p^2|n$, $r_1,r_2$ and $r_3$ as given in Lemma \ref{lemma3.3}. 
	Then,
	\begin{equation*}\scriptsize
		\psi(A(G)),x) = (1+x)^{\alpha}~\psi(Q,x)
	\end{equation*}
	where $\alpha= p\sum_{i=1} ^{r_1} (\phi(d_i)-1)+ (p+1) \sum_{i=r_1+1} ^{r_1+r_2} (\phi(d_i)-1) $ $+p^2  $ $\sum_{i=r_1+r_2+1} ^{r_1+r_2+r_3} (\phi(d_i)-1)  + $ $(p^2+p) \sum_{i=r_1+r_2+r_3+1} ^{k}$ $ (\phi(d_i)-1)$, $Q =(q_{ij})_{l \times l}$ with
	\begin{equation*} 
	q_{ij}= \begin{cases}
		\phi(|S_j|) & \text{if~} i \neq j \text{~and~} T_{S_i} \sim T_{S_j} \\
		\phi(|S_i|)-1 & \text{if~}  i = j\\
		0 & \text{otherwise}\\
	\end{cases}
   \end{equation*}
\end{theorem}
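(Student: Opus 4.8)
The plan is to read this statement off as the specialization of Theorem~\ref{theorem3} to $G=\mathbb{Z}_{p^2}\times\mathbb{Z}_n$, feeding in the subgroup census provided by Lemma~\ref{lemma3.3}. Because $p^2\mid n$, every element of $G$ has order dividing $n$, so each cyclic subgroup has order equal to some $d_i\in S$; this is precisely the hypothesis under which Theorem~\ref{theorem3} produces the factorization $\psi(A(\mathscr{P}(G)),x)=(1+x)^{\alpha}\,\psi(Q,x)$. Thus the only genuine task is to evaluate the exponent $\alpha=\sum_{i} n_i\,(\phi(d_i)-1)$, where $n_i$ denotes the number of cyclic subgroups of order $d_i$, and to identify the quotient matrix $Q$.

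First I would apply Lemma~\ref{lema2.5} to write $\mathscr{P}(G)=H\big[K_{\phi(|S_1|)},\ldots,K_{\phi(|S_l|)}\big]$, where $H$ is the containment graph of Lemma~\ref{lemma2.5}, and invoke Lemma~\ref{lemma2.3} to certify that the partition $T_{S_1}\cup\cdots\cup T_{S_l}$ is equitable with the stated quotient matrix $Q$. Then Lemma~\ref{lemma2.6} supplies the spectrum of each clique $K_{\phi(d_i)}$ --- the eigenvalue $-1$ with multiplicity $\phi(d_i)-1$ and the eigenvalue $\phi(d_i)-1$ with multiplicity one --- and Theorem~\ref{theorem1} assembles these: the product $\prod_i \psi(K_{\phi(|S_i|)},x)/(x-(\phi(|S_i|)-1))$ cancels each clique's Perron eigenvalue and collapses to $(1+x)^{\alpha}$, leaving the factor $\psi(Q,x)$. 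This is exactly the mechanism behind Theorem~\ref{theorem3}, so after this step the result is formal.

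The one substantive computation is the count $n_i$, which is exactly the output of Lemma~\ref{lemma3.3}: there are $p^2$ subgroups for each order divisible by $p^3$ (indices $1\le i\le r_1$), $p^2+p$ for each order divisible by $p^2$ but not $p^3$ ($r_1+1\le i\le r_1+r_2$), $p+1$ for each order divisible by $p$ but not $p^2$ ($r_1+r_2+1\le i\le r_1+r_2+r_3$), and a single subgroup for each order coprime to $p$. Substituting these weights into $\alpha=\sum_i n_i(\phi(d_i)-1)$ and collecting the four index blocks yields the closed form for $\alpha$.

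I do not expect a conceptual obstacle, since Theorem~\ref{theorem3} does the spectral work; the delicate part is purely the bookkeeping. Specifically, one must trust that the list in Lemma~\ref{lemma3.3} is both exhaustive and irredundant --- every cyclic subgroup of $\mathbb{Z}_{p^2}\times\mathbb{Z}_n$ appearing exactly once and with the stated order --- and one must attach to each of the four index blocks the \emph{matching} multiplicity from that lemma, namely $p^2,\,p^2+p,\,p+1,\,1$ respectively; any permutation of these weights would preserve the shape of the factorization but corrupt the value of $\alpha$. I would double-check this pairing explicitly before declaring the proof complete.
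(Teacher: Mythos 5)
Your route is the paper's route: the printed proof is a three-line specialization of Theorem~\ref{theorem3}, quoting the subgroup counts from Lemma~\ref{lemma3.3} and citing Lemma~\ref{lema2.5}, so there is nothing methodologically different to discuss. The issue is exactly the pairing you flagged as needing a double-check, and you should actually perform that check, because it does not come out the way you assume. The multiplicities you read off Lemma~\ref{lemma3.3} --- $p^2$ for $1\le i\le r_1$, $p^2+p$ for $r_1+1\le i\le r_1+r_2$, $p+1$ for $r_1+r_2+1\le i\le r_1+r_2+r_3$, and $1$ for the remaining indices --- are correct (they agree with the tally $p^2r_1+(p^2+p)r_2+(p+1)r_3+(k-r_1-r_2-r_3)=l$ recorded immediately after that lemma, and with a direct count of elements of each order). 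Substituting them into $\alpha=\sum_i n_i(\phi(d_i)-1)$ gives
\[
\alpha = p^2\sum_{i=1}^{r_1}(\phi(d_i)-1)+(p^2+p)\sum_{i=r_1+1}^{r_1+r_2}(\phi(d_i)-1)+(p+1)\sum_{i=r_1+r_2+1}^{r_1+r_2+r_3}(\phi(d_i)-1)+\sum_{i=r_1+r_2+r_3+1}^{k}(\phi(d_i)-1),
\]
which is \emph{not} the expression in the statement, whose weights on the four blocks are $p,\;p+1,\;p^2,\;p^2+p$. So your sentence ``substituting these weights \dots{} yields the closed form for $\alpha$'' is false as written: it yields a different (and correct) closed form.

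The discrepancy lies in the paper, not in your count: the paper's own proof asserts that the number of cyclic subgroups of order $d_i$ is $p$ on the first block, $p+1$ on the second, $p^2$ on the third and $p^2+p$ on the fourth, which contradicts the very lemma it cites (the value $p$ never even occurs as a count in Lemma~\ref{lemma3.3}); the weights appear to have been transcribed from the pattern of Theorem~\ref{theorem3.1}. To turn your proposal into a complete argument you must state explicitly that the exponent $\alpha$ in the theorem has to be amended to the display above; the identification of $Q$ and the factor $\psi(Q,x)$ are unaffected.
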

\begin{proof}
We have $
	V(\mathscr{P}(\mathbb{Z}_{p^2} \times \mathbb{Z}_{n}))= T_{S_1} \cup T_{S_2} \cup \ldots \cup T_{S_l}$.
By Lemma \ref{lemma3.3}, number of cyclic subgroups of order $d_i$ is $p$ for $1 \leq i \leq r_1$; $(p+1)$ for $r_1+1 \leq i \leq r_1+r_2$; $ p^2$ for $r_1+r_2+1 \leq i \leq r_1+r_2+r_3$; $(p^2+p)$ for $r_1+r_2+r_3+1 \leq i \leq k$. By Lemma \ref{lema2.5},  and Theorem \ref{theorem3}, we get the required result.
\end{proof}

\section{Full Spectrum of Some $\mathscr{P}(\mathbb{Z}_m \times \mathbb{Z}_n )$}\label{4}

\begin{theorem}\label{theorem2.2} 
	Let $G = \mathbb{Z}_p \times \mathbb{Z}_{pq}$, where $p$, $q$ are  distinct prime numbers. Then the spectrum of $\mathscr{P}(G)$ consists of: 
	\begin{enumerate}
		\item $-1$ with multiplicity $p^2q-p-4$.
		\item $(p-1)q-1$ with multiplicity $p$.
		\item And the roots of the polynomial \\
		$-x^4+(pq-4)x^3+ (p^2q^2-p^2q-pq^2+3pq+p^2+q-7)x^2+(-p^3(q-1)^2+3p^2q^2-5p^2q-pq^2+4p^2-q^2+pq+p+5q-8)x -p^4(q-1)^2 +3p^2q^2-6p^2q-pq^2+pq+4p^2-q^2+4q-4$.
		 
	\end{enumerate}
\end{theorem}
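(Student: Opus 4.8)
The plan is to specialize Theorem \ref{theorem3.1} to the case $n=pq$ and then diagonalize the resulting quotient matrix by exploiting its symmetry. Since $p\mid pq$, the divisor set is $S=\{1,p,q,pq\}$, and in the ordering of Lemma \ref{lemma3.1} we have $r=0$ (no divisor is divisible by $p^2$), $t=2$ (namely $p$ and $pq$, divisible by $p$ but not $p^2$), while the remaining divisors $1$ and $q$ are coprime to $p$. Lemma \ref{lemma3.1} then yields $p+1$ cyclic subgroups of order $p$, $p+1$ of order $pq$, one of order $q$, and the trivial one, so $l=2p+4$. Substituting the values $\phi(1)=1$, $\phi(q)=q-1$, $\phi(p)=p-1$, $\phi(pq)=(p-1)(q-1)$ into the exponent formula of Theorem \ref{theorem3.1} gives, after simplification, $\alpha=p^2q-2p-4$. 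It then remains to compute $\psi(Q,x)$ for the $(2p+4)\times(2p+4)$ quotient matrix $Q$ and to explain the gap $p^2q-p-4-\alpha=p$ between the claimed multiplicity of $-1$ and $\alpha$.

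First I would record the containment structure that determines $Q$ through Lemma \ref{lemma2.5}. Label the classes as $A$ (trivial, $\phi=1$), $B=\langle(0,p)\rangle$ (the unique subgroup of order $q$), the order-$p$ subgroups $C_0,\dots,C_p$, and the order-$pq$ subgroups $D_0,\dots,D_p$. The key facts are: $A$ is contained in every subgroup, hence adjacent to all classes; each cyclic $D_j$ has a unique subgroup of order $p$ and a unique one of order $q$, and a direct computation shows the order-$q$ subgroup of every $D_j$ equals $B$, whereas the map sending $D_j$ to its order-$p$ subgroup is a bijection onto $\{C_0,\dots,C_p\}$. Consequently $B\sim D_j$ for all $j$ but $B\not\sim C_j$; after matching indices through this bijection, $C_j\sim D_j$ and $C_j\not\sim D_{j'}$ for $j\neq j'$; and neither the $C_j$ nor the $D_j$ are mutually adjacent, since distinct subgroups of equal order are incomparable.

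Next I would use the symmetric group $S_{p+1}$ acting by simultaneously permuting the indices of the $C_j$ and $D_j$; the matrix $Q$ commutes with this action, which block-diagonalizes it. On the index-invariant part, spanned by $A$, $B$, $\sum_j C_j$ and $\sum_j D_j$, the matrix $Q$ reduces to an explicit $4\times 4$ matrix $R$, whose characteristic polynomial is exactly the quartic of item (3). On the complementary standard representation, where the $C$- and $D$-coordinates each sum to zero, the contributions of $A$ and $B$ vanish and $Q$ acts as $p$ decoupled copies of the $2\times 2$ matrix
\begin{equation*}
M=\begin{pmatrix} p-2 & (p-1)(q-1) \\ p-1 & (p-1)(q-1)-1 \end{pmatrix}.
\end{equation*}
A short computation gives $\operatorname{tr} M=(p-1)(q-1)+p-3$ and $\det M=-(p-1)(q-1)-p+2$, so the eigenvalues of $M$ are $-1$ and $(p-1)q-1$, each contributing to $\psi(Q,x)$ with multiplicity $p$. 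Combining: the factor $(1+x)^{\alpha}$ together with the $p$ copies of the eigenvalue $-1$ give $-1$ with total multiplicity $p^2q-p-4$, the eigenvalue $(p-1)q-1$ appears with multiplicity $p$, and $\psi(R,x)$ supplies the four remaining roots, matching items (1)--(3).

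The main obstacle is twofold. The delicate conceptual point is the adjacency bookkeeping of the second paragraph, especially verifying that every order-$pq$ subgroup contains the single order-$q$ subgroup $B$ while the order-$p$ subgroups are distributed bijectively among the $D_j$; this is what forces the clean block structure. The unavoidable computation is expanding $\det(xI-R)$ for the $4\times 4$ matrix $R$ to recover the stated quartic, which is routine but lengthy, and one must track signs carefully, since the polynomial as written is $-\psi(R,x)$ and so has the same roots. I would also check that $-1$ and $(p-1)q-1$ are not roots of $\psi(R,x)$, so that the three families of eigenvalues are genuinely distinct and the multiplicities sum to $|G|=p^2q$.
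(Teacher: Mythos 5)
Your proposal is correct and shares the paper's overall skeleton --- the same inventory of cyclic subgroups ($p+1$ of order $pq$, $p+1$ of order $p$, one each of orders $q$ and $1$), the same adjacency bookkeeping (the unique order-$q$ subgroup lies in every order-$pq$ subgroup, the order-$pq$ subgroups are matched bijectively to the order-$p$ subgroups, and no two subgroups of equal order are adjacent), and the same reduction to the quotient matrix via Lemma \ref{lema2.5} and Theorem \ref{theorem3.1}, with $\alpha=(p+1)(\phi(pq)+\phi(p)-2)+\phi(q)-1=p^2q-2p-4$ --- but you factor $\psi(Q,x)$ by a genuinely different device. The paper converts the permutation matrix $M$ to the identity by simultaneous row/column interchanges and then runs a long chain of explicit row and column operations to peel off the factor $(1-ab)^p$; you instead observe that $Q$ commutes with the simultaneous $S_{p+1}$-action on the $C$- and $D$-indices and so splits into an invariant $4\times 4$ block $R$ plus $p$ copies of the $2\times 2$ block $\left(\begin{smallmatrix}\phi(p)-1 & \phi(pq)\\ \phi(p) & \phi(pq)-1\end{smallmatrix}\right)$, whose eigenvalues $-1$ and $\phi(pq)+\phi(p)-1=(p-1)q-1$ reproduce exactly the paper's factor $(1-ab)^p=\bigl(\tfrac{(1+x)((p-1)q-1-x)}{\phi(pq)\phi(p)}\bigr)^p$; I checked your trace and determinant of $M$ and they are right, as is the count $\alpha+p=p^2q-p-4$. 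The symmetry argument is cleaner and explains \emph{why} the factorization occurs; its only residual cost is expanding the quartic $\psi(R,x)$ to confirm item (3) (a sanity check: $\operatorname{tr}R=pq-4$ matches the coefficient of $x^3$), which is the same unverified routine expansion the paper leaves at the stage of $\phi(pq)\phi(p)\phi(q)\det(C)$. Your closing remark that one should verify $-1$ and $(p-1)q-1$ are not roots of the quartic, so the stated multiplicities are exact, is a point of care the paper omits.
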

\begin{proof}
	For $G = \mathbb{Z}_p \times \mathbb{Z}_{pq}$, an element of $G$ is of the order $pq$, $q$, $p$ or $1$. We can easily deduce that $G$ has exactly $ \frac{\phi(pq) (\phi(p)+2)}{\phi(pq)}= (p+1)$ cyclic subgroups of order $pq$,  $ \frac{\phi(p) (\phi(p)+2)}{\phi(p)}= (p+1)$ cyclic subgroups of order $p$, one cyclic subgroup each of order $q$ and  $1$. 
	Then by Case 2 of the proof of  Lemma \ref{lemma3.2}  the cyclic  subgroups of order $pq$ are $\langle(0,1)\rangle, \langle(1,1) \rangle,\langle(2,1) \rangle, \ldots , \langle (p-1,1) \rangle,  \langle(1,p) \rangle$ and the cyclic subgroups of order $p$ are $\langle(0,q)\rangle, \langle(1,q) \rangle,$ $ ,\langle(2,q) \rangle,$ $\ldots , \langle (p-1,q) \rangle,  \langle(1,0) \rangle$. And by Case 3 of the proof of Lemma \ref{lemma3.2}  the cyclic subgroup of order $q$ is $\langle (0,p) \rangle$ and  the subgroup of order one is  $\langle (0,0) \rangle$. Let $\Omega_{pq}=\{T_{\langle(0,1)\rangle}, T_{\langle(1,1) \rangle}, \ldots , T_{\langle (p-1,1) \rangle},   T_{\langle(1,p) \rangle}\} $, $\Omega_{p}= \{  T_{\langle (0,q) \rangle} ,  T_{\langle(1,q)\rangle}, T_{\langle(2,q) \rangle}, \ldots  ,T_{\langle (p-1,q) \rangle},  T_{\langle(1,0)\rangle }\}$, $\Omega_{q}=T_{\langle (0,p)\rangle }$, and $\Omega_1= \{T_{\langle (0,0) \rangle}\}$.  By Lemma \ref{lema2.3}, each element of the set $\Omega_{pq} \cup \Omega_{p}\cup\Omega_{q} \cup \Omega_{1}$  induces a complete subgraph in $\mathscr{P}(G)$. No two elements of $\Omega_{pq}$ are adjacent because otherwise the corresponding subgroups will coincide. Similarly, no two  elements of $\Omega_{p}$   are adjacent.  An element of $\Omega_{pq}$ is adjacent to exactly one element of $\Omega_{p}$ and one element of $\Omega_{q}$, because every cyclic group has a unique subgroup of particular order.
	 Every element of  $\Omega_{pq}$ is adjacent to a distinct element of  $\Omega_{p}$.
	 No element of  $\Omega_{p}$ is adjacent to an element of $\Omega_{q}$ because an element of order $p$ cannot be generated by an element of order $q$ and vice-versa. Every element of the set $\Omega_{pq} \cup \Omega_{p} \cup \Omega_{q}$  is adjacent to the element of $\Omega_{1}$. It is to be noted that the rows and columns of  $A(\mathscr{P}(G))$  are indexed in order by the elements of $\Omega_{pq}$, elements of $\Omega_{p}$, element of $\Omega_{q}$ and element of $\Omega_{1}$. So, $A(\mathscr{P}(G))$ is given by
	\begin{equation*}\label{equation1}
		\scriptsize	\begin{bmatrix}
			I_{(p+1) \times (p+1)} \bigotimes (J-I)_{\phi({pq})\times  \phi({pq})}& M^T _{(p+1) \times (p+1)} \bigotimes J_{\phi(pq)\times \phi(p)} & J_{(p+1)\times 1} \bigotimes J_{\phi(pq) \times \phi(q)}& J_{(p+1)\times 1} \bigotimes J_{\phi(pq)\times 1}\\
			M_{(p+1)\times (p+1)} \bigotimes J_{ \phi(p)\times \phi(pq)}& I_{(p+1)\times (p+1)} \bigotimes (J-I)_{\phi(p)\times \phi(p)}& O_{(p+1)\times 1 } \bigotimes O_{\phi(p)\times \phi(q)} & J_{(p+1)\times 1} \bigotimes J_{\phi(p)\times 1}\\
			J_{1\times (p+1)} \bigotimes J_{\phi(q)\times \phi(pq) }& O_{1\times (p+1)} \bigotimes O_{\phi(q)\times \phi(p)}&I_{1\times 1} \bigotimes (J-I)_{\phi(q)\times \phi(q)} & J_{1 \times 1} \bigotimes J_{\phi(q)\times 1}\\
			J_{1\times (p+1)} \bigotimes J_{1\times \phi(pq)} & J_{1\times (p+1)} \bigotimes J_{1\times \phi(p)}& J_{1\times 1} \bigotimes J_{1\times \phi(q)} & O_{1\times 1}
		\end{bmatrix}
	\end{equation*}
	where $M$ is a permutation matrix. 
	
	 By Lemma \ref{lema2.5} and Theorem \ref{theorem3.1}, we  have
	\begin{equation}\label{eq4.1}
		\psi(A(\mathscr{P}(G),x)= \psi(Q,x) (1+x)^{[(p+1)(\phi(pq)+\phi(p)-2)+\phi(q)-1]}
	\end{equation}
	where 
	\begin{equation*}
		\psi(Q,x)=
		\begin{vmatrix}
			(\phi(pq)-1-x)I_{(p+1) \times (p+1)} & \phi(p) M^T _{(p+1) \times (p+1)} & \phi(q)J_{(p+1) \times 1}& J_{(p+1) \times 1}\\
			\phi(pq)M_{(p+1) \times (p+1)}& (\phi(p)-1-x)I_{(p+1) \times (p+1)}& O_{(p+1) \times 1}&J_{(p+1) \times 1}\\
			\phi(pq) J_{1 \times (p+1)} & O_{1 \times (p+1)}& (\phi(q)-1-x)&1\\
			\phi(pq) J_{1 \times (p+1)} & \phi(p) J_{1 \times (p+1)}&\phi(q)&-x 
			
		\end{vmatrix}
	\end{equation*}
	
	After  suitable  interchanging of rows and columns simultaneously (no effect on sign of determinant), we can convert $M$ to an identity matrix and this will not effect other blocks.  Then
	\begin{equation*}
			\psi(Q,x)=
		\begin{vmatrix}
			(\phi(pq)-1-x)I_{(p+1) \times (p+1)} & \phi(p) I _{(p+1) \times (p+1)} & \phi(q)J_{(p+1) \times 1}&J_{(p+1) \times 1}\\
			\phi(pq)I_{(p+1) \times (p+1)}& (\phi(p)-1-x)I_{(p+1) \times (p+1)} & O_{(p+1) \times 1}&J_{(p+1) \times 1}\\
			
			\phi(pq) J_{1 \times (p+1)} & O_{1 \times (p+1)}& (\phi(q)-1-x)&1\\
			\phi(pq) J_{1 \times (p+1)} & \phi(p) J_{1 \times (p+1)}&\phi(q)&-x
			
		\end{vmatrix}
	\end{equation*}
	
	Taking  common $\phi(pq)$ from the first $(p+1)$ columns , $\phi(p)$ from the next $(p+1)$ columns and $\phi(q)$ from the last column and taking $a=\frac{(\phi(pq)-1-x)}{\phi(pq)}$, $b= \frac{ (\phi(p)-1-x)}{\phi(p)}$, $c=\frac{(\phi(q)-1-x)}{\phi(q)}$, and $d=-x$, we have 
	\begin{equation*}
		\psi(Q,x)= {\phi(pq)}^{(p+1)} {\phi(p)}^{(p+1)} \phi(q)
		\begin{vmatrix}
			aI_{(p+1) \times (p+1)} & I _{(p+1) \times (p+1)} & J_{(p+1) \times 1}&J_{(p+1) \times 1}\\
			I_{(p+1) \times (p+1)}&bI_{(p+1) \times (p+1)} & O_{(p+1) \times 1}&J_{(p+1) \times 1}\\
			
			J_{1 \times (p+1)} & O_{1 \times (p+1)}& c&1\\
			J_{1 \times (p+1)} &  J_{1 \times (p+1)}&1&d
		\end{vmatrix}	
	\end{equation*}
	Applying the row operations $R_i \to R_i -aR_{i+p+1}$ for $1 \leq i \leq p+1$, $ R_{2p+3} \to R_{2p+3}- (R_{p+2}+ R_{p+3} + \ldots+R_{2p+2})$ and $ R_{2p+4} \to R_{2p+4}- (R_{p+2}+ R_{p+3} + \ldots+R_{2p+2})$, we get
	\begin{equation*}
		\psi(Q,x)= {\phi(pq)}^{(p+1)} {\phi(p)}^{(p+1)} \phi(q)
		\begin{vmatrix}
			O_{(p+1) \times (p+1)} &(1-ab) I _{(p+1) \times (p+1)} & J_{(p+1) \times 1}&(1-a)J_{(p+1) \times 1}\\
			I_{(p+1) \times (p+1)}&bI_{(p+1) \times (p+1)} & O_{(p+1) \times 1}&J_{(p+1) \times 1}\\
			
			O_{1 \times (p+1)}&	-bJ_{1 \times (p+1)}  &c &1-(p+1)\\
			O_{1 \times (p+1)}&(1-b)J_{1 \times (p+1)}&1&d-(p+1)
		\end{vmatrix}	
	\end{equation*}
	Now we apply the row operation on the last row $R_{2p+4} \to R_{2p+4}-R_{2p+3}$. Then we expand the determinant  along first $(p+1)$ columns successively and get
	\begin{equation*}
	\psi(Q,x)= {\phi(pq)}^{(p+1)} {\phi(p)}^{(p+1)} \phi(q) 
		\begin{vmatrix}
			(1-ab)I_{(p+1) \times (p+1)} & J_{(p+1) \times 1}&(1-a)J_{(p+1) \times 1}\\
			-bJ_{1 \times (p+1)}&c&-p\\
			J_{1 \times p+1}&1-c&d-1
		\end{vmatrix} 
	\end{equation*}
	We  write the above equation in the form: 
	\begin{equation*}
		\psi(Q,x)= {\phi(pq)}^{(p+1)} {\phi(p)}^{(p+1)} \phi(q) 
		\begin{vmatrix}
			(1-ab)I_{p \times p} & O_{p \times 1}& J_{p \times 1}&(1-a)J_{(p \times 1}\\
			O_{1 \times p}&(1-ab)&1&(1-a)\\
			-bJ_{1 \times p}&-b&c&-p\\
			J_{1 \times p}&1&1-c&d-1
		\end{vmatrix}
	\end{equation*}
	Applying the column operations $C_{i} \to C_{i}-C_{p+1}$ for $1 \leq i \leq p$, we get
	
	\begin{equation*}
		\psi(Q,x)= {\phi(pq)}^{(p+1)} {\phi(p)}^{(p+1)} \phi(q) 
		\begin{vmatrix}
			(1-ab)I_{p \times p} & O_{p \times 1}& J_{p \times 1}&(1-a)J_{p \times 1}\\
			-(1-ab)J_{1 \times p}& (1-ab)&1&1-a\\
			O_{1 \times p}& -b&c&-p\\
			O_{1 \times p}&1&1-c&d-1
		\end{vmatrix}
	\end{equation*}
	We apply the row operation $R_{p+1} \to R_1 + R_2+...+R_p+ R_{p+1}$ in the above determinant. Then
	\begin{equation*}
		\psi(Q,x)= {\phi(pq)}^{(p+1)} {\phi(p)}^{(p+1)} \phi(q) 
		\begin{vmatrix}
			(1-ab)I_{p \times p} & O_{p \times 1}& J_{p \times 1}&(1-a)J_{p \times 1}\\
			O_{1 \times p}& (1-ab)&(p+1)&(1-a)(p+1)\\
			O_{1 \times p}& -b&c&-p\\
			O_{ 1 \times p}&1&1-c&d-1
		\end{vmatrix}
	\end{equation*}
	Applying the row operations $R_{p+1} \to R_{p+1}- (1-ab)R_{p+3}$, $R_{p+2} \to R_{p+2}+ bR_{p+3}$ and by taking common $(1-ab)$ from first  $p$ columns, we get 
	\begin{equation*}
		\psi(Q,x)=\scriptsize {\phi(pq)}^{(p+1)} {\phi(p)}^{(p+1)} \phi(q) (1-ab)^p
		\begin{vmatrix}
			I_{p \times p} & O_{p \times 1}& J_{p \times 1}&(1-a)J_{p \times 1}\\
			O_{1 \times p}& 0&p+c+ab(1-c)&(1-a)(p+1)-(1-ab)(d-1)\\
			O_{1 \times p}& 0&c+b(1-c)&-p+b(d-1)\\
			O_{ 1 \times p}&1&1-c&d-1
		\end{vmatrix}
	\end{equation*}
	Expanding the above determinant along the first $(p+1)$ columns and then applying the row operation $R_1 \to R_1 -aR_2$ to the resulting determinant, we have 
	\begin{equation*} 
		\psi(Q,x) = 
		{\phi(pq)}^{(p+1)} {\phi(p)}^{(p+1)} \phi(q) 	(1-ab)^p det(C)
	\end{equation*}
	where
	\begin{equation*}
		det(C)= \begin{vmatrix}
			p+c-ac& p+(1-a)+(1-d)\\
			b+c-bc& -p-b(1-d)
		\end{vmatrix}
	\end{equation*}
	Thus, $\psi(Q,x)= 	{\phi(pq)}^{(p+1)} {\phi(p)}^{(p+1)} \phi(q)	(1-ab)^p[(p+c-ac)(-p-b(1-d))-(b+c-bc)(p+(1-a)+(1-d))]$. 
	The factor $(1-ab)^p= \big(\frac{(1+x)(\phi(pq)+\phi(p)-1-x)}{\phi(pq)\phi(p)}\big)^p$ gives the  eigenvalues $ -1$ and $\phi(pq)+\phi(p)-1$ with both multiplicities $p$. And   ${\phi(pq)}\phi(p)\phi(q){det(C)}$ is the quartic polynomial given in the hypothesis and from this factor we get four eigenvalues. 
	Finally by Equation (\ref{eq4.1}), we get multiplicity of $-1$ is $p^2q-p-4$.
\end{proof}

\begin{example}{\rm
		We illustrate Theorem \ref{theorem2.2} for $p=3$ and $q=2$. So we get the partition of   $V(\mathscr{P}(\mathbb{Z}_3 \times \mathbb{Z}_6))$  as 
		
		$	V(\mathscr{P}(\mathbb{Z}_3 \times \mathbb{Z}_6))= T_{ (0,1) } \cup T_{ (1,1)} \cup T_{ (2,1)} \cup T_{ (1,3)} \cup T_{ (0,2) } \cup T_{ (1,2)} \cup T_{ (2,2)} \cup T_{ (1,0)} \cup T_{ (0,3) } \cup T_{ (0,0)}$
	
		By indexing the rows and columns in the order of the above partition, we get
		\begin{equation*}
			A(\mathscr{P}(\mathbb{Z}_3 \times \mathbb{Z}_6))=
			\begin{bmatrix}
				I_{4} \bigotimes (J-I)_{2 \times 2}& M^T _{4} \bigotimes J_{2 \times 2} & J_{4 \times 1} \bigotimes J_{2 \times 1}& J_{4 \times 1} \bigotimes J_{2 \times 1}\\
				M_{4} \bigotimes J_{ 2 \times 2}& I_{4} \bigotimes (J-I)_{2 \times 2}& O_{4 \times 1 } \bigotimes O_{2 \times 1} & J_{4 \times 1} \bigotimes J_{2 \times 1}\\
				J_{1 \times 4} \bigotimes J_{1 \times 2 }& O_{1 \times 4} \bigotimes O_{1 \times 2}&I_1 \bigotimes (J-I)_{1 \times 1} & J_{1 \times 1} \bigotimes J_{1 \times 1}\\
				J_{1 \times 4} \bigotimes J_{1 \times 2} & J_{1 \times 4} \bigotimes J_{1 \times 2}& J_{1 \times 1} \bigotimes J_{1 \times 1} & O_{1\times 1}
				
			\end{bmatrix}
		\end{equation*}
		where $M$ is a permutation matrix 
		\begin{equation*}
			M= \begin{bmatrix}
				1 & 0 & 0 & 0\\
				0 & 0 & 1 & 0\\
				0 & 1 & 0 & 0\\
				0& 0 & 0& 1
			\end{bmatrix}
		\end{equation*}
		
		Then by Theorem $\ref{theorem3.1}$, we get 	$	\psi(A(\mathscr{P}(\mathbb{Z}_3 \times \mathbb{Z}_{6}),x)= \psi(Q,x)~ (1+x)^{8}$
		
		where $\psi(Q,x)$ is given by
		\begin{equation*}
			 \begin{bmatrix}
				(1-x)I_4& 2M_4 ^T&J_{4 \times 1}&J_{4 \times 1}\\
				2M_4 &(1-x)I_4&O_{4 \times 1}&J_{4 \times 1}\\
				2J_{1 \times 4}&O_{1 \times 4}& -x&1\\
				2J_{1 \times 4}&2J_{1 \times 4}&1&-x
			\end{bmatrix}
		\end{equation*} 
		After interchanging the  rows $R_6$ and $R_7$, and columns $C_6$ and $C_7$ simultaneously,we get
		\begin{equation*}
			\psi(Q,x)= \begin{vmatrix}
				(1-x)I_4& 2I_4&J_{4 \times 1}&J_{4 \times 1}\\
				2I_4&(1-x)I_4&O_{4 \times 1}&J_{4 \times 1}\\
				2J_{1 \times 4}&O_{1 \times 4}& -x&1\\
				2J_{1 \times 4}&2J_{1 \times 4}&1&-x
			\end{vmatrix} =  (2)^{4}(2)^4\begin{vmatrix}
			\frac{(1-x)}{2}	I_4& I_4&J_{4 \times 1}&J_{4 \times 1}\\
			I_4&\frac{(1-x)}{2}I_4&O_{4 \times 1}&J_{4 \times 1}\\
			J_{1 \times 4}&O_{1 \times 4}& -x&1\\
			J_{1 \times 4}&J_{1 \times 4}&1&-x
		\end{vmatrix}
		\end{equation*}

		We put $a=\frac{(1-x)}{2}$, $b=-x$, apply the row operations $R_i \to R_i -aR_{i+4}$ for $1\leq i\leq4$, $R_{10} \to  R_{10}-R_9$, and  get 
		\begin{equation*}
		\psi(Q,x)= (2)^{8}\begin{vmatrix}
				O_4& (1-a^2)I_4&J_{4 \times 1}&(1-a)J_{4 \times 1}\\
				I_4&aI_4&O_{4 \times 1}&J_{4 \times 1}\\
				J_{1 \times 4}&O_{1 \times 4}&b&1\\
				O_{1 \times 4}&J_{1 \times 4}&1-b&b-1
			\end{vmatrix}
		\end{equation*}
		Now applying the row operation $R_9 \to R_9 -(R_5+R_6+R_7+R_8)$, and expanding along the first four columns, we get
		\begin{equation*}
			\psi(Q,x)|= (2)^{8}\begin{vmatrix}
				(1-a^2)I_4&J_{4 \times 1}&(1-a)J_{4 \times 1}\\
				-aJ_{1 \times 4}&b&-3\\
				J_{1 \times 4}&1-b&b-1
			\end{vmatrix}
		\end{equation*}
		First, applying the column operations $C_i \to C_i -C_4$, $1\leq i \leq 3$, then row operation $R_4 \to R_4 +R_3+R_2+R_1$, and  expanding along first three columns, we have
		\begin{equation*}
			\psi(Q,x)= 2^8 (1-a^2)^3\begin{vmatrix}
				(1-a^2)& 4&4(1-a)\\
				-a&b&-3&\\
				1&1-b&b-1
			\end{vmatrix}
		\end{equation*} 
		After putting the values of $a$ and $b$, we get $	|Q-xI|= (1+x)^3(3-x)^3(x+3)(x-1)(x^2-4x-17)$.
		So, the adjacency spectrum of $\mathscr{P}(\mathbb{Z}_3 \times \mathbb{Z}_{6})$ is 
		\begin{equation*}
			\begin{pmatrix}
				-3&-1&2-\sqrt{21}&1&3&2+\sqrt{21}\\
				1&11&1&1&3&1		
			\end{pmatrix}.
	\end{equation*}}

\end{example}

\begin{theorem}
	Let $G= \mathbb{Z}_{p^2} \times \mathbb{Z}_{p^2}$, where $p$ is a prime number. The spectrum of $\mathscr{P}(G)$ consists of: 
			%
	\begin{enumerate}
		\item $-1$ with multiplicity $p^4-p^2-2p-2$.
		\item $p^2-p-1$ with multiplicity $p^2-1$.
		\item  $\frac{1}{2} \big(p^2 \pm (p-1)\sqrt{5p^2-2p+1}-3\big)$ with multiplicity $p$.
		\item And  the roots of the polynomial $(x^3+(3-p^2)x^2 +(-2p^4+3p^3-4p^2 +p+3)x-p^5+p^4-2p^2+p+1)$.
	\end{enumerate}
\end{theorem}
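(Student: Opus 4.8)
The plan is to specialize the machinery of Section \ref{3} to $n=p^2$ and then diagonalize the resulting quotient matrix by exploiting its cluster symmetry. Since $p^2\mid p^2$, Lemma \ref{lemma3.3} and Theorem \ref{theorem3.3} apply with $n=p^2$, for which $S=\{1,p,p^2\}$, so that $r_1=0$ and $r_2=r_3=1$. Reading off Lemma \ref{lemma3.3}, $G=\mathbb{Z}_{p^2}\times\mathbb{Z}_{p^2}$ has exactly $p^2+p=p(p+1)$ cyclic subgroups of order $p^2$, exactly $p+1$ cyclic subgroups of order $p$, and the single trivial subgroup; hence $l=p(p+1)+(p+1)+1=p^2+2p+2$. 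Substituting $\phi(p^2)=p^2-p$, $\phi(p)=p-1$, $\phi(1)=1$ into the exponent of Theorem \ref{theorem3.3} gives $\alpha=p(p+1)(p^2-p-1)+(p+1)(p-2)=p^4-p^2-2p-2$, so that $\psi(A(\mathscr{P}(G)),x)=(1+x)^{p^4-p^2-2p-2}\,\psi(Q,x)$. It then remains to factor $\psi(Q,x)$, a polynomial of degree $l=p^2+2p+2$.

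Next I would record the containment structure underlying $Q$ (via Lemma \ref{lemma2.5}). Every cyclic subgroup contains the identity, so the trivial class is adjacent to all others; each cyclic subgroup of order $p^2$ has a unique subgroup of order $p$, and by a transitivity/counting argument the $p(p+1)$ order-$p^2$ subgroups distribute evenly over the $p+1$ order-$p$ subgroups, so each order-$p$ subgroup lies inside exactly $p$ of them, while no two subgroups of equal order contain each other. Thus the order-$p^2$ classes split into $p+1$ clusters of size $p$, indexed by the order-$p$ subgroup they contain. Writing $\mathbf{1}_k$ for the all-ones column of length $k$ and ordering the classes as (the $p(p+1)$ order-$p^2$ classes grouped by cluster, then the $p+1$ order-$p$ classes, then the identity), the quotient matrix is
\begin{equation*}
Q=\begin{pmatrix}(\phi(p^2)-1)I_{p(p+1)}&\phi(p)\,(I_{p+1}\otimes\mathbf{1}_p)&\mathbf{1}_{p(p+1)}\\\phi(p^2)\,(I_{p+1}\otimes\mathbf{1}_p^{\mathsf T})&(\phi(p)-1)I_{p+1}&\mathbf{1}_{p+1}\\\phi(p^2)\,\mathbf{1}_{p(p+1)}^{\mathsf T}&\phi(p)\,\mathbf{1}_{p+1}^{\mathsf T}&0\end{pmatrix}.
\end{equation*}

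The heart of the proof is to compute $\psi(Q,x)=\det(xI-Q)$ by decomposing $\mathbb{R}^{l}$ according to this symmetry rather than expanding one large determinant. First, any vector supported on the order-$p^2$ classes that sums to zero on each cluster (and vanishes elsewhere) is an eigenvector with eigenvalue $\phi(p^2)-1=p^2-p-1$; since each of the $p+1$ clusters contributes a $(p-1)$-dimensional space of such vectors, this gives $p^2-p-1$ with multiplicity $(p-1)(p+1)=p^2-1$. On the complementary space of cluster-constant vectors I would collapse $Q$ using $(I_{p+1}\otimes\mathbf{1}_p^{\mathsf T})(I_{p+1}\otimes\mathbf{1}_p)=p\,I_{p+1}$, reducing to a $(2p+3)$-dimensional system in cluster values $\tilde u\in\mathbb{R}^{p+1}$, order-$p$ values $v\in\mathbb{R}^{p+1}$, and the identity value. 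This system decouples under the $S_{p+1}$-action permuting clusters: on the sum-zero subspace (dimension $p$ in each of $\tilde u,v$) the identity coordinate drops out and one is left with the $2\times2$ determinant $(x-\phi(p^2)+1)(x-\phi(p)+1)-\phi(p)\phi(p^2)\,p$, whose roots are $\tfrac12\big(p^2-3\pm(p-1)\sqrt{5p^2-2p+1}\big)$, each with multiplicity $p$; on the one-dimensional fully symmetric part one obtains a $3\times3$ determinant whose expansion is exactly the cubic of part $4$. The count $(p^2-1)+2p+3=p^2+2p+2=l$ confirms that all factors of $\psi(Q,x)$ are accounted for, and multiplying by $(1+x)^{p^4-p^2-2p-2}$ yields the four stated families; as a check the multiplicities sum to $p^4=|G|$ and the eigenvalues sum to the zero trace.

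The main obstacle is organizing the quotient-matrix computation so that it stays tractable: $Q$ is not symmetric, and a head-on cofactor expansion of the $(p^2+2p+2)\times(p^2+2p+2)$ determinant is hopeless, so the real work lies in recognizing the cluster symmetry and verifying that the three invariant subspaces (cluster-balanced, cluster-constant sum-zero, and fully symmetric) genuinely block-diagonalize $xI-Q$. Once that reduction is in place the remaining algebra---expanding the $2\times2$ and $3\times3$ determinants with $\phi(p^2)=p^2-p$ and $\phi(p)=p-1$ substituted---is routine but must be carried out carefully to reproduce the stated quadratic surd and the cubic coefficients. Equivalently, one may mirror the determinant reduction of Theorem \ref{theorem2.2}, writing $xI-Q$ in Kronecker block form and clearing the diagonal order-$p^2$ block by row operations before expanding, which leads to the same reduced determinants.
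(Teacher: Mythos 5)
Your proposal is correct, and it reproduces the paper's first stage exactly: the enumeration of the $p(p+1)$ cyclic subgroups of order $p^2$, the $p+1$ of order $p$ and the trivial one via Lemma \ref{lemma3.3}, the reduction to a quotient matrix $Q$ of order $p^2+2p+2$ through the generalized-join machinery, and the exponent $\alpha=p(p+1)(\phi(p^2)-1)+(p+1)(\phi(p)-1)=p^4-p^2-2p-2$ (you correctly used the general formula $\sum_i n_i(\phi(d_i)-1)$ rather than the coefficients as literally printed in Theorem \ref{theorem3.3}, which are permuted there; your value agrees with the paper's own proof). Where you genuinely diverge is in factoring $\psi(Q,x)$. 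The paper writes $Q-xI$ in Kronecker block form, with a permutation matrix recording which order-$p$ subgroup each order-$p^2$ subgroup contains, normalizes columns by $\phi(p^2)$ and $\phi(p)$, and then runs an explicit chain of block row/column operations and Laplace expansions to reach $\phi(p^2)^{p^2+p}\phi(p)^{p+1}a^{p^2-1}(p-ab)^p\big[(p-ab)(c-(p+1))-(p+1)(1-b)(p-a)\big]$. You instead block-diagonalize $Q$ by invariant subspaces: the cluster-balanced vectors give $p^2-p-1$ with multiplicity $p^2-1$ (matching the factor $a^{p^2-1}$), a second equitable-partition collapse plus the $S_{p+1}$ symmetry yields $p$ copies of the $2\times 2$ block whose characteristic polynomial $(x-\phi(p^2)+1)(x-\phi(p)+1)-p\,\phi(p)\phi(p^2)$ is exactly the paper's $(p-ab)$ factor up to normalization (discriminant $(p-1)^2(5p^2-2p+1)$, giving the surd eigenvalues), and the fully symmetric $3\times 3$ block gives the cubic. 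The one point your route must (and does) address is that these subspaces are genuinely $Q$-invariant despite $Q$ being non-symmetric; this holds because $Q$ respects both the cluster partition and the cluster-permuting symmetry. Your approach buys conceptual transparency for the multiplicities and avoids the long determinant manipulation; the paper's is more mechanical but self-contained and mirrors its other computations (e.g.\ Theorem \ref{theorem2.2}), as you note at the end. The two routes produce identical factors, and your dimension count $(p^2-1)+2p+3=p^2+2p+2$ and the total $p^4$ confirm nothing is missed.
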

\begin{proof}
	For $G= \mathbb{Z}_{p^2} \times \mathbb{Z}_{p^2}$, an element of the group $G$ is of order $1$, $p$ or $p^2$.  We get that $G$ has exactly $  \frac{\phi(p^2)(2+2\phi(p)+\phi(p^2))}{\phi(p^2)}= p(p+1)$ cyclic subgroups of order $p^2$,  $ \frac{(2\phi(1)+\phi(p))\phi(p) }{\phi(p)}= p+1$ cyclic subgroups of order $p$, and one subgroup of order $1$.\\
	By Case 2 of the proof of Lemma \ref{lemma3.3}, the cyclic subgroups of order $p^2$ are $\langle(0,1)\rangle, \langle(1,1)\rangle,\langle(2,1)\rangle,$ $\ldots,\langle(p^2-1,1)\rangle,$ $\langle {(1,p)} \rangle,$ $ \langle(2,p)\rangle,$ $ \ldots, \langle(p-1,p)\rangle,$ $ \langle(1,0)\rangle$. By Case 3 of the proof of Lemma \ref{lemma3.3}, the cyclic subgroups of order $p$ are $\langle(0,p)\rangle,\langle(p,p)\rangle,$ $\langle(2p,p)\rangle,$ $\ldots,\langle((p-1)p,p)\rangle,\langle(p,0)\rangle$ and  by Case 4 of the same  lemma, the subgroup of order $1$ is $\langle(0,0)\rangle$. Let $\Omega_{p^2}=  T_{\langle(0,1)\rangle}, T_{\langle(1,1)\rangle}, T_{\langle(2,1)\rangle}, \ldots, T_{\langle(p^2-1,1)\rangle}, T_{\langle(1,p)\rangle},$ $ T_{\langle(2,p)\rangle}, \ldots ,  T_{\langle(p-1,p)\rangle}, T_{\langle(1,0)\rangle} $, $\Omega_{p}=  T_{\langle(0,p)\rangle}, T_{\langle(p,p)\rangle}, T_{\langle(2p,p)\rangle}, \ldots, T_{\langle(p(p-1),p)\rangle},   T_{\langle(p,0)\rangle}$ and $\Omega_{1}=	T_{\langle(0,0)\rangle}$.  No two elements of  $\Omega_{p^2}$ are  adjacent because otherwise the corresponding subgroups will coincide. Similarly, no two elements of $\Omega_{p}$ are adjacent.  Each element of  $\Omega_{p^2}$ is adjacent to exactly one element of  $\Omega_{p}$  because every cyclic group has a unique subgroup of particular order.
	  And every element from the set  $\Omega_{p^2}\cup \Omega_{p}$  is adjacent to the element of  $\Omega_{1}$.   It is to be noted that the rows and columns of  $A(\mathscr{P}(G))$  are indexed in order by the elements of $\Omega_{1}$, elements of $\Omega_{p^2}$, and element of $\Omega_{p}$. So, $A(\mathscr{P}(G))$ is given by
	\begin{equation*}\label{equation17}
		A(\mathscr{P}(G))= \scriptsize	\begin{bmatrix}
			0& J_{1 \times (p^2+p)} \bigotimes J_{1 \times \phi(p^2)}& J_{1 \times (p+1)} \bigotimes J_{1 \times \phi(p)}\\
			J_{(p^2+p)  \times 1} \bigotimes J_{\phi(p^2) \times 1}& I_{(p^2+p) \times (p^2+p)} \bigotimes (J-I)_{\phi({p^2}) \times \phi({p^2})}& J_{p \times 1} \bigotimes \big(M_{(p+1) \times (p+1)} \bigotimes J_{\phi(p^2) \times \phi(p)} \big)\\
			J_{(p+1) \times 1} \bigotimes J_{\phi(p) \times 1} &  J_{1 \times p} \bigotimes \big(M_{(p+1) \times (p+1)} \bigotimes J_{\phi(p) \times \phi(p^2)} \big)^T& I_{(p+1) \times (p+1) } \bigotimes (J-I)_{\phi(p) \times \phi(p)}\\
		\end{bmatrix}
	\end{equation*}
	where $M$ is a permutation matrix. 
	%
	%
	
	Then  by Lemma \ref{lema2.5}  and Theorem $\ref{theorem3}$, we get 
	\begin{equation*}
		\psi(A(\mathscr{P}(G)),x)= (1+x)^{(\phi(p^2)-1)(p^2+p)+(\phi(p)-1)(p+1)}~ \psi(Q,x)
	\end{equation*}
where
	\begin{equation*}
	\psi(Q,x)=
		\begin{vmatrix}
			-x & \phi(p^2)  J_{1\times (p^2+p) } & \phi(p) J_{1 \times (p+1)}\\
			J_{(p^2+p) \times 1} &(\phi(p^2)-1-x) I_{(p^2+p) \times (p^2+p)}& J_{p \times 1} \bigotimes \big(\phi(p) M_{(p+1) \times (p+1)}\big) &\\
			
			J_{(p+1) \times 1} & J_{1 \times p} \bigotimes \big(\phi(p^2) M_{(p+1)\times (p+1)} ^T\big)& (\phi(p)-1-x) I_{(p+1) \times (p+1)}
			
		\end{vmatrix}
	\end{equation*}
	
	After  suitable  interchanging of rows and columns simultaneously (no effect on sign of determinant), we can convert $M$ to an identity matrix and this will not effect other blocks. Taking common $1$ from first column, $\phi(p^2)$ from next $(p^2+p)$ columns and $\phi(q)$ from last $(p+1)$ columns, we get

	\begin{equation*}
		\psi(Q,x)= \phi(p^2)^{p^2+p}\phi(p)^{p+1}\begin{vmatrix}
			-x &   J_{1\times (p^2+p) } &  J_{1 \times (p+1)}\\
			J_{(p^2+p) \times 1} &\frac{(\phi(p^2)-1-x)}{\phi(p^2)} I_{(p^2+p) \times (p^2+p)}& J_{p \times 1} \bigotimes \big( I_{(p+1) \times (p+1)}\big) &\\
			
			J_{(p+1) \times 1} & J_{1 \times p} \bigotimes \big( I_{(p+1)\times (p+1)}\big)& \frac{(\phi(p)-1-x)}{\phi(p)} I_{(p+1) \times (p+1)}
		\end{vmatrix}
	\end{equation*}
	By taking $\frac{(\phi(p^2)-1-x)}{\phi(p^2)}= a$, $\frac{(\phi(p)-1-x)}{\phi(p)}=b$, and $-x = c$, we have
	
	\begin{equation*}
		\psi(Q,x)= \phi(p^2)^{p^2+p}\phi(p)^{p+1}\begin{vmatrix}
			c&   J_{1\times (p^2+p) } &  J_{1 \times (p+1)}\\
			J_{(p^2+p) \times 1} &a I_{(p^2+p) \times (p^2+p)}& J_{p \times 1} \bigotimes \big( I_{(p+1) \times (p+1)}\big) &\\
			
			J_{(p+1) \times 1} & J_{1 \times p} \bigotimes \big( I_{(p+1)\times (p+1)}\big)& b I_{(p+1) \times (p+1)}
		\end{vmatrix}
	\end{equation*}
	By applying the row operations on blocks $R_{i} \to R_{i}-R_{(p+1)}$ for $2 \leq i \leq p$, we get 
	\begin{equation*}
		\psi(Q,x)=\scriptsize  \phi(p^2)^{p^2+p}\phi(p)^{p+1} \begin{vmatrix}
			c & J_{1 \times (p+1)}& J_{1 \times (p+1)}&\cdots & J_{1 \times (p+1)}& J_{1 \times (p+1)}\\
			O_{(p+1)\times (p+1)}& aI_{(p+1)\times (p+1)} &O_{(p+1)\times (p+1)}& \cdots&-aI_{(p+1)\times (p+1)} & O_{(p+1)\times (p+1)}\\
			O_{(p+1)\times (p+1)}&O_{(p+1)\times (p+1)}& aI_{(p+1)\times (p+1)}  &  \cdots&-aI_{(p+1)\times (p+1)} & O_{(p+1)\times (p+1)}\\
			\vdots &\vdots&\vdots & \ddots& \vdots &\vdots\\ 
			J_{(p+1) \times 1}&O_{(p+1)\times (p+1)}& O_{(p+1)\times (p+1)}& \cdots&aI_{(p+1)\times (p+1)} &I_{(p+1)\times (p+1)} \\
			J_{(p+1) \times 1}&I_{(p+1)\times (p+1)} &I_{(p+1)\times (p+1)} &\cdots&I_{(p+1)\times (p+1)} &bI_{(p+1)\times (p+1)} 
			
		\end{vmatrix}	
	\end{equation*}
	Applying the column operation on $(p+1)^{th}$ block, $C_{p+1} \to C_{2}+C_{3}+\ldots+C_{p+1}$, we get
	\begin{equation*}
	\psi(Q,x)= \scriptsize  \phi(p^2)^{p^2+p}\phi(p)^{p+1} \begin{vmatrix}
			c & J_{1 \times (p+1)}& J_{1 \times (p+1)}&\cdots &p J_{1 \times (p+1)}& J_{1 \times (p+1)}\\
			O& aI &O& \cdots&O& O\\
			O&O& aI &  \cdots&O& O\\
			\vdots &\vdots&\vdots & \ddots& \vdots &\vdots\\ 
			J_{(p+1) \times 1}&O& O& \cdots&aI&I\\
			J_{(p+1) \times 1}& I&I&\cdots&pI&bI
			
		\end{vmatrix}	
	\end{equation*}
	Expanding along the rows $R_2,R_3,\ldots, R_{p^2}$, we get
	\begin{equation*}
	\psi(Q,x) =\phi(p^2)^{p^2+p}\phi(p)^{p+1} a^{(p+1)(p-1)} \begin{vmatrix}
			c& p J_{1 \times (p+1)}&  J_{1 \times (p+1)}\\
			J_{(p+1) \times 1} & aI_{(p+1)\times (p+1)}& I_{(p+1)\times (p+1)}\\
			J_{(p+1) \times 1}&pI_{(p+1)\times (p+1)}&bI_{(p+1)\times (p+1)}
			
		\end{vmatrix}
	\end{equation*}
	
	By applying the row operations $R_1 \to R_{1}-(R_2+R_3+\ldots+R_{p+2})$, $R_{p+2
		+i} \to R_{p+2+i}-bR_{i+1}$ for $1 \leq i \leq p+1$, we get
	\begin{equation*}
		\psi(Q,x) =\phi(p^2)^{p^2+p}\phi(p)^{p+1}	a^{(p^2-1)}
		\begin{vmatrix}
			c-(p+1)&(p-a)J_{(p+1) \times 1}& O_{1 \times (p+1)}\\
			J_{(p+1) \times 1}&aI_{(p+1)\times (p+1)}&I_{(p+1)\times (p+1)}\\
			(1-b)	J_{(p+1) \times 1} & (p-ab)I_{(p+1)\times (p+1)}& O_{(p+1)\times (p+1)} 
			
		\end{vmatrix}
	\end{equation*}
	Expanding along the last $p+1$ columns, we get
	\begin{equation*}
	\psi(Q,x) =\phi(p^2)^{p^2+p}\phi(p)^{p+1}	a^{(p^2-1)}
		\begin{vmatrix}
			c-(p+1)&(p-a)J_{(p+1) \times 1}\\
			(1-b)J_{(p+1) \times 1}&  (p-ab)I_{(p+1)\times (p+1)}
			
		\end{vmatrix}
	\end{equation*}
	Applying the  row operations $R_{i} \to R_{i}-R_2$ for $3 \leq i \leq p+2$ and then the column operation $C_2 \to C_2 +C_3+\ldots+C_{p+2}$, we get 
	\begin{equation*}
		\psi(Q,x) =\phi(p^2)^{p^2+p}\phi(p)^{p+1}	a^{(p^2-1)}
		\begin{vmatrix}
			c-(p+1)&(p+1)(p-a)&(p-a)&\cdots & (p-a)\\
			
			(1-b)&	p-ab&0&\cdots & 0\\
			0&0& p-ab&\cdots&0\\
			\vdots & \vdots &\vdots& \ddots & \vdots\\
			0&0&0& \cdots&  p-ab
			
		\end{vmatrix}
	\end{equation*}
This implies
	\begin{equation}\label{eq19}
\psi(Q,x) =	\phi(p^2)^{p^2+p}\phi(p)^{p+1}	a^{(p^2-1)}(p-ab)^p[(p-ab)(c-(p+1))-(p+1)(1-b)(p-a)]
	\end{equation}
	
	Here, $(p-ab)$ is a quadratic polynomial and  $ [(p-ab)(c-(p+1))-(p+1)(1-b)(p-a)]$ is a cubic polynomial. By putting values of $a$, $b$ and $c$ in  Equation (\ref{eq19}), then $\psi(Q,x)$ is \\
	$ \phi(p^2)^{p^2+p}\phi(p)^{p+1} \big(\phi(p^2)-1-x\big)^{(p^2-1)} \big(-x^2 +(p^2-3)x+p^4-3p^3+4p^2 -p-2 \big)^p$ $[x^3+(3-p^2)x^2 +(-2p^4+3p^3-4p^2 +p+3)x-p^5+p^4-2p^2+p+1]$. 
\end{proof}

\section{Characteristic polynomial of $\mathscr{P}(\mathbb{Z}_{m} \times \mathbb{Z}_{n}), m \nmid n$}\label{5}
 If $m \nmid n$ and $gcd(m,n)=1$, then $\mathbb{Z}_m \times \mathbb{Z}_n \cong \mathbb{Z}_{mn}$. So here we study the spectrum of $\mathscr{P}(\mathbb{Z}_m \times \mathbb{Z}_n)$ when $m \nmid n$ and $gcd(m,n) \neq 1$. Recall that we have taken the set of all divisors of $n$ as  $S= \{d_1,d_2,\ldots, d_k\}$. Let $S'=\{\xi_1,\xi_2,\ldots,\xi_t,\xi_{t+1},\ldots,\xi_s\}$ be the set of divisors of $m$. Since $gcd(m,n) \neq 1$, without loss of generality let  $S \cap S' = $ $\{d_{r+1},$ $d_{r+2},\ldots,d_{k=r+h}\}$ $=\{\xi_{t+1},\xi_{t+2}, \ldots,\xi_{s= t+h}\}$ for some positive integers $r,t$  and $h$. Let $T= \big\{ d_i \xi_j: gcd(d_i,\xi_j)=1, 1 \leq i \leq k, 1 \leq j \leq s\big\}$, and $T' = T / \{S \cup S'\}$. 
\begin{theorem}\label{theorem5.1}
	Let $ G= \mathbb{Z}_m \times \mathbb{Z}_n$ with $m \nmid n$ and  $gcd(m,n) \neq 1$.  Then 
	\begin{equation*}
		\psi(A(\mathscr{P}(G)),x) = (1+x)^{\alpha} \psi(Q,x)
	\end{equation*}
	where $\alpha = \sum_{i=1} ^k n_i(\phi(d_i)-1) + \sum_{j=1} ^t m_j(\phi(\xi_j)-1) + \sum_{a \in T'} \eta_a(\phi(a)-1)$, $n_i, m_j, \eta_a$ are respectively the number of cyclic subgroups of $G$ of order $d_i, \xi_j, a $,  and $Q =(q_{ij})_{l \times l}$ with
	\begin{equation*} 
		q_{ij}	= \begin{cases}
			\phi(|\langle(a_j,b_j)\rangle|) & \text{if~} i \neq j \text{~and~} T_{\langle (a_i,b_i) \rangle } \sim T_{\langle (a_j,b_j) \rangle} \\
			\phi(|\langle(a_i,b_i)\rangle|)-1 &  \text{if~}  i = j \\
			0 & \text{otherwise}\\
		\end{cases}
	\end{equation*}
\end{theorem}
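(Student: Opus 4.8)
The plan is to follow the same route as in the proof of Theorem \ref{theorem3}, observing that none of the structural results of Section \ref{2} actually require the hypothesis $m\mid n$; that hypothesis entered there only to force every cyclic subgroup order to be a divisor of $n$, which in turn produced the compact exponent $\alpha=\sum_i n_i(\phi(d_i)-1)$. Here the task is therefore to run the identical spectral argument while replacing the order-bookkeeping with the correct enumeration for the case $m\nmid n$.

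First I would identify the set of possible orders of cyclic subgroups of $G=\mathbb{Z}_m\times\mathbb{Z}_n$. The order of $(a,b)$ is $\mathrm{lcm}(|a|,|b|)$ with $|a|\mid m$ and $|b|\mid n$, so every order divides $\mathrm{lcm}(m,n)$; conversely, splitting a divisor $D$ of $\mathrm{lcm}(m,n)$ prime-by-prime (sending $p^{e}$ to the $n$-side when $e\le v_p(n)$ and to the $m$-side otherwise) writes $D=d_i\xi_j$ with $d_i\mid n$, $\xi_j\mid m$ and $\gcd(d_i,\xi_j)=1$. Hence the set of realizable orders is exactly $T$, which decomposes as the disjoint union of $S$, the non-shared divisors $\{\xi_1,\dots,\xi_t\}$ of $m$, and the genuinely new products in $T'$. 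This is precisely the partition of orders that the three summands of $\alpha$ in the statement are tallying.

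Next I would invoke the general machinery exactly as before. By Lemma \ref{lema2.3} each generator class $T_{\langle(a_i,b_i)\rangle}$ induces a clique $K_{\phi(|\langle(a_i,b_i)\rangle|)}$, by Lemma \ref{lemma2.3} the partition into these classes is equitable with quotient matrix $Q$, and by Lemma \ref{lema2.5} the power graph is the generalized join $\mathscr{P}(G)=H[K_{\phi(|\langle(a_1,b_1)\rangle|)},\dots,K_{\phi(|\langle(a_l,b_l)\rangle|)}]$, where $H$ records the subgroup-containment adjacency of Lemma \ref{lemma2.5}. Applying Theorem \ref{theorem1} together with the complete-graph spectrum of Lemma \ref{lemma2.6} (so that $\psi(K_\nu,x)/(x-(\nu-1))=(1+x)^{\nu-1}$, since $K_\nu$ is $(\nu-1)$-regular) gives
\begin{equation*}
	\psi(A(\mathscr{P}(G)),x)=\psi(Q,x)\prod_{i=1}^{l}(1+x)^{\phi(|\langle(a_i,b_i)\rangle|)-1}=(1+x)^{\sum_{i=1}^{l}\big(\phi(|\langle(a_i,b_i)\rangle|)-1\big)}\,\psi(Q,x).
\end{equation*}

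Finally I would convert the exponent $\sum_{i=1}^{l}(\phi(|\langle(a_i,b_i)\rangle|)-1)$, which runs over subgroups, into a sum over orders by grouping subgroups of equal order: the $n_i$ subgroups of order $d_i$, the $m_j$ subgroups of each non-shared order $\xi_j$ with $j\le t$ (the shared divisors having already been counted among the $d_i$), and the $\eta_a$ subgroups of each new order $a\in T'$ contribute exactly the three sums defining $\alpha$. The one point requiring genuine care — and the only place this proof differs in substance from Theorem \ref{theorem3} — is verifying that the enumeration $S\,\sqcup\,\{\xi_1,\dots,\xi_t\}\,\sqcup\,T'$ is both exhaustive and free of double counting, so that every cyclic subgroup is tallied once and only once in $\alpha$; once this is settled, the containment graph $H$ and the quotient matrix $Q$ are read off verbatim as in the earlier theorems.
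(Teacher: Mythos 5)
Your proposal is correct and follows essentially the same route as the paper: identify $S\cup S'\cup T'$ (equivalently, your set $T$) as the complete list of cyclic subgroup orders and then run the generalized-join/equitable-partition argument of Theorem \ref{theorem3} verbatim. The paper's own proof is just a one-line reduction to Theorem \ref{theorem3}; you supply the order-enumeration and exponent-bookkeeping details it leaves implicit, including the prime-by-prime verification that the realizable orders are exactly the coprime products $d_i\xi_j$.
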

\begin{proof}
	We obtain that $S \cup S' \cup T'$ is the set of all possible orders of the cyclic subgroups of $G$. Then we proceed exactly in the similar way as in the proof of Theorem \ref{theorem3} and get the required result,
\end{proof}
\begin{lemma}\label{lemma5.1}
		Let $G= \mathbb{Z}_{p^2} \times \mathbb{Z}_n$ with $p|n$ and $p^2 \nmid n$. Let the first $c $ elements in $S$ be relatively prime to $p^2$, and the next $k-c$  elements in $S$ be divisible by $p$. Then the cyclic subgroups of $G$ of different orders are as given below: 
		the cyclic subgroups of order $d_i$ are
		${\langle(0,\frac{n}{d_i})\rangle} $  for $1 \leq i \leq c$; $ {\langle(0,\frac{n}{d_i})\rangle} ,{\langle(p,\frac{n}{d_i})\rangle}, {\langle(2p,\frac{n}{d_i})\rangle}, \ldots,$ $ {\langle(p(p-1),\frac{n}{d_i})\rangle},$ ${\langle(p,\frac{np}{d_i})\rangle}$   for $c+1 \leq i \leq k$; and the cyclic subgroups of order $d_ip^2$ are ${\langle(1,\frac{n}{d_i})\rangle},{\langle(1,\frac{n}{d_ip})\rangle},$ $ {\langle(2,\frac{n}{d_ip})\rangle}, \ldots,$ $ {\langle(p-1,\frac{n}{d_ip})\rangle}  $ for $ 1 \leq i \leq c$. 
\end{lemma}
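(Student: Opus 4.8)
The plan is to follow the pattern of Lemmas \ref{lemma3.1}--\ref{lemma3.3}, adapted to the new feature that $p^2\nmid n$. Since $p\mid n$ but $p^2\nmid n$, every divisor of $n$ has $p$-adic valuation $0$ or $1$; accordingly the $d_i$ split into those coprime to $p$ (here $d_1,\dots,d_c$) and those divisible by $p$ (equivalently $p\parallel d_i$, as $p^2\nmid n$; here $d_{c+1},\dots,d_k$). First I would record that the order of $(a,b)\in\mathbb{Z}_{p^2}\times\mathbb{Z}_n$ is $\operatorname{lcm}(\operatorname{ord}(a),\operatorname{ord}(b))$, where $\operatorname{ord}(a)\in\{1,p,p^2\}$ while the $p$-part of $\operatorname{ord}(b)$ is at most $p$. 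Writing $\operatorname{ord}(b)=p^{\delta}d'$ with $\delta\in\{0,1\}$ and $p\nmid d'$, this produces exactly two kinds of attained orders: a divisor of $n$ when $\operatorname{ord}(a)\in\{1,p\}$, and $p^2 d'$ with $d'$ a divisor of $n$ coprime to $p$ when $\operatorname{ord}(a)=p^2$. Hence the set of orders is $S\cup\{d_ip^2:1\le i\le c\}$, which is the skeleton of the statement; the genuinely new phenomenon compared with Lemma \ref{lemma3.3} (where $p^2\mid n$) is precisely the appearance of the orders $d_ip^2$ that are not divisors of $n$.

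Next I would count the cyclic subgroups of each order using the standard identity that their number equals (number of elements of order $d$)$/\phi(d)$, together with the fact that $\mathbb{Z}_N$ has $\phi(e)$ elements of order $e$ for $e\mid N$ and none otherwise. Thus the number of elements of order $d$ in $G$ is $\sum_{\operatorname{lcm}(e_1,e_2)=d,\, e_1\mid p^2,\, e_2\mid n}\phi(e_1)\phi(e_2)$, exactly as in the computations of Lemma \ref{lemma3.3}. Carrying this out in the three cases gives: one subgroup when $p\nmid d_i$ (only $e_1=1$ contributes); $\,p+1$ subgroups when $p\parallel d_i$ (the contributions $e_1=1$ and $e_1=p$, with $e_2\in\{d_i,d_i/p\}$ for the latter); and $p$ subgroups for the order $d_ip^2$ (forced $e_1=p^2$, with $e_2\in\{d_i,pd_i\}$). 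These counts $1$, $p+1$ and $p$ agree with the number of subgroups listed in the statement.

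Finally I would exhibit the explicit generators and check two things: that each listed element has the asserted order, and that the subgroups of a common order are pairwise distinct (so that, their number matching the count above, the list is exhaustive). The order verifications are routine $\operatorname{lcm}$ computations: e.g. $(jp,\tfrac{n}{d_i})$ has order $\operatorname{lcm}(p,d_i)=d_i$ for $1\le j\le p-1$, $(p,\tfrac{np}{d_i})$ has order $\operatorname{lcm}(p,\tfrac{d_i}{p})=d_i$, and $(j,\tfrac{n}{d_ip})$ has order $\operatorname{lcm}(p^2,d_ip)=d_ip^2$ for $1\le j\le p-1$. The main obstacle is the distinctness argument, and it is sharpest for the order $d_ip^2$ subgroups, since all of $\langle(1,\tfrac{n}{d_i})\rangle,\langle(1,\tfrac{n}{d_ip})\rangle,\dots,\langle(p-1,\tfrac{n}{d_ip})\rangle$ project onto all of $\mathbb{Z}_{p^2}$ and share the same intersections with $\mathbb{Z}_{p^2}\times\{0\}$ and $\{0\}\times\mathbb{Z}_n$, so the first coordinates $1,\dots,p-1$ separate them only through the gluing of the two $p$-parts. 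The cleanest way to settle this is the Chinese Remainder Theorem: write $\mathbb{Z}_n\cong\mathbb{Z}_p\times\mathbb{Z}_w$ with $n=pw$, $\gcd(p,w)=1$, so $G\cong(\mathbb{Z}_{p^2}\times\mathbb{Z}_p)\times\mathbb{Z}_w$. A cyclic subgroup of order $d_ip^2$ then factors as a cyclic subgroup of order $p^2$ in the $p$-part $\mathbb{Z}_{p^2}\times\mathbb{Z}_p$ times the unique cyclic subgroup of order $d_i$ in $\mathbb{Z}_w$, and the $p$ cyclic subgroups of order $p^2$ in $\mathbb{Z}_{p^2}\times\mathbb{Z}_p$ are visibly distinct, being indexed by their images in $\mathbb{Z}_p\times\mathbb{Z}_p$. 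The same decomposition re-derives the counts $p+1$ and $p$ from the $p+1$ order-$p$ and $p$ order-$p^2$ cyclic subgroups of the $p$-part, giving an independent check. With orders verified, distinctness established, and counts matched, the displayed list is complete, proving the lemma.
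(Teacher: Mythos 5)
Your proposal is correct and follows essentially the same route as the paper: identify the attainable orders as $S\cup\{d_ip^2:1\le i\le c\}$ via the $\operatorname{lcm}$ of component orders, count the elements of each order as $\sum\phi(e_1)\phi(e_2)$ over pairs with the prescribed $\operatorname{lcm}$, and divide by $\phi$ of the order to get the counts $1$, $p+1$ and $p$ before listing explicit generators. The only difference is that you additionally verify the listed subgroups are pairwise distinct (via the CRT splitting of the $p$-part), a check the paper leaves implicit; this is a welcome tightening rather than a different argument.
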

\begin{proof}
	We have at least one $d_i$ which is relatively prime to $p^2$. So $c \geq 1$. Since $p|n$, there exists at least one $d_i$ such that $p|d_i$. So $ k-c \geq 1$.
	Then the order of the elements of $G$ are $d_1,d_2, \ldots,d_k,d_1p^2,d_2p^2,\ldots,d_cp^2.$  
	 Now the cyclic subgroups of $G$ of different orders are obtained as below.\\
	\textbf{Case:1} For each $i$, $1 \leq i \leq c$,  the number of cyclic subgroup of $G$ of order $d_i$  is $\frac{(\phi(1)\phi(d_i))}{\phi(d_i)}=1$. This cyclic subgroup is ${\langle(0,\frac{n}{d_i})\rangle} $  for  $1 \leq i \leq c$.\\ 
	\textbf{Case:2}  For each $i$, $c+1\leq i \leq k$,  the number of cyclic subgroups of group $G$ of order $d_i$   are $\frac{\big[\phi(1)\phi(d_i)+\phi(p)\phi(d_i)+\phi(p)\phi(\frac{d_i}{p})\big]}{\phi(d_i)}=(p+1)$. These $(p+1)$ cyclic subgroups are $ {\langle(0,\frac{n}{d_i})\rangle} ,$ ${\langle(p,\frac{n}{d_i})\rangle},$ $ {\langle(2p,\frac{n}{d_i})\rangle},$ $ \ldots, {\langle(p(p-1),\frac{n}{d_i})\rangle},{\langle(p,\frac{np}{d_i})\rangle}$   for  $c+1 \leq i \leq k$. \\ 
	\textbf{Case:3}  For  each $i$, $1 \leq i \leq c$, the number of cyclic subgroups of group $G$ of order $d_ip^2$  are $\frac{\phi(p^2)\phi(d_i)+\phi(p^2)\phi(d_ip)}{\phi(d_i)}= p$. These $p$ cyclic subgroups are  ${\langle(1,\frac{n}{d_i})\rangle},{\langle(1,\frac{n}{d_ip})\rangle}, {\langle(2,\frac{n}{d_ip})\rangle}, \ldots, {\langle(p-1,\frac{n}{d_ip})\rangle}  $  for  $1 \leq i \leq c$.
\end{proof}

 By Lemma \ref{lemma5.1}, there are total $c+(p+1)(k-c)+pc= l(say)$ cyclic subgroups of $\mathbb{Z}_{p^2} \times \mathbb{Z}_n$. We list all these in the order of their occurrence in  the statement of Lemma \ref{lemma5.1}, that is $S_1 =  \langle(0,\frac{n}{d_1})\rangle$, $S_2 = \langle(0,\frac{n}{d_2})\rangle,\ldots,S_{l}=\langle(p-1,\frac{n}{d_{kp}})\rangle$. Without loss of generality, let $d_c=1$. $S'= \{p^2,p,1\}$ and $T'=\{d_1p^2,d_2p^2,\ldots,d_{c-1}p^2\}$.

\begin{theorem}\label{theorem5.2}
	Let $G= \mathbb{Z}_{p^2} \times \mathbb{Z}_n$ with $p|n$ and $p^2 \nmid n$. Let the first $c$ elements in $S$ be relatively prime to $p^2$, and the next $k-c$  elements in $S$ be divisible by $p$. Then,
	\begin{equation*}
		\psi(A(\mathscr{P}(G)),x)= (1+x)^{\alpha} ~\psi(Q,x)
	\end{equation*}
	where $\alpha = {\sum_{i=1} ^{c} (\phi(d_i)-1)+ (p+1) \sum_{i=c+1} ^{k} (\phi(d_i)-1) +p \sum_{i=1} ^{c} (\phi(d_ip^2)-1) }$ and $Q =(q_{ij})_{l \times l}$ with
	\begin{equation*} 
		q_{ij}= \begin{cases}
			\phi(|S_j|) & \text{if~} i \neq j \text{~and~} T_{S_i} \sim T_{S_j} \\
			\phi(|S_i|)-1 & \text{if~}  i = j\\
			0 & \text{otherwise}\\
		\end{cases}
	\end{equation*} 
\end{theorem}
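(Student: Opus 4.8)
The plan is to invoke Theorem~\ref{theorem5.1} directly, feeding it the explicit classification of cyclic subgroups provided by Lemma~\ref{lemma5.1}. First I would fix the vertex partition
\begin{equation*}
	V(\mathscr{P}(G)) = T_{S_1} \cup T_{S_2} \cup \cdots \cup T_{S_l},
\end{equation*}
where $S_1, S_2, \ldots, S_l$ are the cyclic subgroups listed in Lemma~\ref{lemma5.1} in their stated order. Because $m = p^2$ with $p \mid n$ and $p^2 \nmid n$, the possible orders of these subgroups are exactly the divisors $d_1, \ldots, d_k$ of $n$ together with the integers $d_1 p^2, \ldots, d_c p^2$; this is precisely the order set $S \cup S' \cup T'$ appearing in Theorem~\ref{theorem5.1}, now realized with $S' = \{1, p, p^2\}$ and $T' = \{d_1 p^2, \ldots, d_{c-1} p^2\}$.

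The next step is purely to read off multiplicities from the three cases of Lemma~\ref{lemma5.1}: there is one subgroup of order $d_i$ for $1 \le i \le c$, there are $p+1$ subgroups of order $d_i$ for $c+1 \le i \le k$, and there are $p$ subgroups of each order $d_i p^2$ for $1 \le i \le c$. Substituting these counts for $n_i$, $m_j$, and $\eta_a$ in the exponent of Theorem~\ref{theorem5.1} collapses its three sums into
\begin{equation*}
	\alpha = \sum_{i=1}^c (\phi(d_i)-1) + (p+1)\sum_{i=c+1}^k (\phi(d_i)-1) + p\sum_{i=1}^c (\phi(d_i p^2)-1),
\end{equation*}
which is the claimed exponent. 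The structural backbone is then supplied by Lemma~\ref{lema2.5}, which presents $\mathscr{P}(G)$ as the generalized join $H[K_{\phi(|S_1|)}, \ldots, K_{\phi(|S_l|)}]$ over the containment graph $H$, and by Lemma~\ref{lemma2.3}, which guarantees that the $\rho$-partition is equitable with quotient matrix exactly the $Q$ in the statement; since each complete factor is regular with spectrum given by Lemma~\ref{lemma2.6}, Theorem~\ref{theorem1} factors the characteristic polynomial as $(1+x)^{\alpha}\,\psi(Q,x)$, the factor $1+x$ arising from the eigenvalue $-1$ of the $K_{\phi(|S_i|)}$.

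I expect the only real friction to be bookkeeping rather than mathematics. One must check that the order $d_c p^2$, which equals $p^2$ under the normalization $d_c = 1$, already belongs to $S'$ and is therefore not double-counted in $T'$, so that the third sum in $\alpha$ runs over the correct range. One must also be sure that the edges of $H$, equivalently the off-diagonal pattern of $Q$, faithfully encode the subgroup-containment relations among the $d_i$-subgroups and the $d_i p^2$-subgroups via Lemma~\ref{lemma2.5}; once that identification is in place, the conclusion is immediate from Theorem~\ref{theorem5.1}.
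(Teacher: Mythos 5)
Your proposal is correct and follows essentially the same route as the paper: the paper's proof likewise partitions $V(\mathscr{P}(G))$ into the generator sets $T_{S_1},\ldots,T_{S_l}$, reads off the subgroup counts from the three cases of Lemma \ref{lemma5.1} (noting separately the $p$ subgroups of order $p^2$ so that nothing is double-counted), and concludes by Lemma \ref{lema2.5} and Theorem \ref{theorem5.1}. Your additional remarks about the equitable partition and the generalized-join structure are just the ingredients already packaged inside Theorem \ref{theorem5.1}, so there is no substantive difference.
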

\begin{proof}   
	We have $V(\mathscr{P}(G))= T_{S_1} \cup T_{S_2}\cup \ldots \cup T_{S_l}$. 	By Lemma \ref{lemma5.1}, number of cyclic subgroups of order $d_i$ is $ 1$ for $1 \leq i \leq c$; $ (p+1)$ for $ c+1 \leq i \leq k$.  The  number of cyclic subgroups of order $p^2$ are $p$. And the number of cyclic subgroups of order $d_ip^2$ is $ p$ for $1 \leq i \leq c-1$. Then by Lemma \ref{lema2.5} and Theorem  \ref{theorem5.1},  we get the required result.
\end{proof}

\begin{theorem}
	Let $G= \mathbb{Z}_{p^2} \times \mathbb{Z}_{pq}$, where $p,q$ are distinct prime numbers. Then the spectrum of $\mathscr{P}(G)$ consists of:
	\begin{enumerate}
	\item $-1$ with multiplicity $p^3q-2p-6$.
	\item $p^2q-pq-1$ with multiplicity $p-1$.
	\item $pq-q-1$ with multiplicity $p-1$.
	\item And the remaining $8$ eigenvalues are the roots of the determinant
	\begin{equation}\label{eq6}
	\begin{vmatrix}
		1-ab& 0&-b&1-b&-b&1-b\\
		1&0&c&1&1&1\\
		0&1-cd&0&0&-d&1-d\\
		1-a&0&1-p&d-p&-p&1-p\\
		1&1&1&0&e&1\\
		1-a&1-c&1-p&1-p&1-2p&f-2p
	\end{vmatrix}
\end{equation}
	\end{enumerate}
where $ a= \frac{\phi(p^2q)-1-x}{\phi(p^2q)}$, $b =\frac{\phi(p^2)-1-x}{\phi(p^2)} $, $c=\frac{\phi(pq)-1-x}{\phi(pq)}$, $d= \frac{\phi(p)-1-x}{\phi(p)}$, $e=\frac{\phi(q)-1-x}{\phi(q)}$ and $f=-x$.
\end{theorem}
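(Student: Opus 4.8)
The plan is to obtain the statement as an explicit evaluation of Theorem \ref{theorem5.2} in the case $n = pq$, followed by a determinant reduction of $\psi(Q,x)$ to the $6 \times 6$ form of Equation (\ref{eq6}). Since $\mathbb{Z}_{p^2} \times \mathbb{Z}_{pq}$ has $m = p^2 \nmid n = pq$ but $p \mid n$, Lemma \ref{lemma5.1} and Theorem \ref{theorem5.2} apply with $d_1 = 1,\ d_2 = q,\ d_3 = p,\ d_4 = pq$ and $c = 2$. First I would list the cyclic subgroups by order: there is one of order $1$, one of order $q$, there are $p+1$ of order $p$, $p+1$ of order $pq$, $p$ of order $p^2$, and $p$ of order $p^2q$, so that $l = 4p+4$. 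A direct count gives $|V(\mathscr{P}(G))| = p^3 q$, and evaluating the exponent in Theorem \ref{theorem5.2} yields $\alpha = p^3 q - 4p - 4$.

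Next I would pin down the containment (equivalently, by Lemma \ref{lemma2.5}, the adjacency) relations among the six order-classes. The key structural facts are: $\langle(0,0)\rangle$ lies in every subgroup; the unique order-$q$ subgroup $\langle(0,p)\rangle$ lies in every subgroup whose order is divisible by $q$ (all the order-$pq$ and order-$p^2q$ subgroups); the order-$p$ subgroup $\langle(p,0)\rangle$ lies in every order-$p^2$ subgroup; each order-$p^2q$ subgroup contains a unique order-$p^2$ subgroup, and this is a bijection between the $p$ subgroups of each kind; and each order-$pq$ subgroup contains a unique order-$p$ subgroup. Recording these as permutation matrices $M$ lets me write $A(\mathscr{P}(G))$ as a block matrix of Kronecker products (diagonal blocks $I \otimes (J - I)$ for the cliques of Lemma \ref{lema2.3}, off-diagonal blocks $M \otimes J$, $J \otimes J$, or $O$), exactly as in the two preceding theorems. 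Theorem \ref{theorem5.2} then peels off the factor $(1+x)^{\alpha}$ and leaves the quotient determinant $\psi(Q,x)$ of order $l = 4p+4$.

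The computational core is the reduction of $\psi(Q,x)$. I would first turn every permutation block $M$ into an identity by simultaneous row and column permutations (which does not affect the determinant), then extract the common factors $\phi(p^2q),\ \phi(p^2),\ \phi(pq),\ \phi(p),\ \phi(q)$ from the corresponding column-blocks and introduce the abbreviations $a,b,c,d,e,f$ of the statement. Block row operations of the form $R_i \to R_i - R_{\mathrm{ref}}$ within each family of equal-order subgroups then collapse the redundancy: the $p$ paired order-$p^2q$/order-$p^2$ subgroups produce a factor $(1-ab)^{p-1}$, and the order-$pq$/order-$p$ subgroups produce $(1-cd)^{p-1}$, after which only one representative of each of the six orders survives, giving Equation (\ref{eq6}). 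Writing $u = \phi(p^2q),\ v = \phi(p^2)$, one checks that $1 - ab$ has roots $x = -1$ and $x = u + v - 1 = p^2q - pq - 1$, and similarly $1 - cd$ has roots $-1$ and $\phi(pq) + \phi(p) - 1 = pq - q - 1$; this produces the two repeated eigenvalues, each with multiplicity $p-1$. The remaining $6 \times 6$ determinant is of degree $8$ in $x$, yielding the last $8$ eigenvalues.

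I expect the main obstacle to be twofold, and to lie in the setup rather than in any deep idea. First, the containment lattice is genuinely asymmetric — e.g. $\langle(p,0)\rangle$ sits inside all $p$ order-$p^2$ subgroups while the remaining order-$p$ subgroups each sit inside a single order-$pq$ subgroup — so the permutation-matrix bookkeeping in the block form of $A(\mathscr{P}(G))$ must be done carefully. Second, tracking the exact sequence of block operations so that precisely the factors $(1-ab)^{p-1}$ and $(1-cd)^{p-1}$ drop out, leaving the correct residual entries $-p$, $1-2p$, $f-2p$ of Equation (\ref{eq6}), is delicate; the safeguard is the multiplicity count, since $\alpha + (p-1) + (p-1) = p^3q - 2p - 6$ and $(p^3q - 2p - 6) + 2(p-1) + 8 = p^3 q$ confirm that every eigenvalue of $A(\mathscr{P}(G))$ has been accounted for.
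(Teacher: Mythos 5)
Your proposal is correct and follows essentially the same route as the paper: specialize the $m\nmid n$ machinery (Lemma \ref{lemma5.1} with $c=2$, giving the same $4p+4$ cyclic subgroups and $\alpha=p^3q-4p-4$), write $A(\mathscr{P}(G))$ in Kronecker block form with permutation blocks for the order-$p^2q$/order-$p^2$ and order-$pq$/order-$p$ pairings, normalize the quotient determinant by the $\phi$-factors to introduce $a,\dots,f$, and collapse the repeated blocks to extract $(1-ab)^{p-1}(1-cd)^{p-1}$ times the $6\times 6$ determinant of Equation (\ref{eq6}); your factorizations of $1-ab$ and $1-cd$ and the final multiplicity count for $-1$ all agree with the paper's.
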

\begin{proof}
	The possible orders of the elements of $G$ are $1,q,p,pq,p^2$ and $p^2q$. By Lemma \ref{lemma5.1}, the cyclic subgroups of $G$ of different orders are as follow:  order $p^2q$ are $\langle (1,1)\rangle,$ $ \langle(2,1)\rangle,$ $ \ldots, \langle((p-1),1)\rangle,\langle(1,p)\rangle$;  order $p^2$ are  $\langle (1,q)\rangle,$ $ \langle(2,q)\rangle,$ $ \ldots, \langle((p-1),q)\rangle,$ $\langle(1,0)\rangle$;  order $pq$ are $\langle (0,1)\rangle,$ $ \langle(p,1)\rangle,$ $ \langle(2p,1)\rangle, \ldots,$ $ \langle((p-1)p,1)\rangle,$ $\langle(p,p)\rangle$; order $p$ are $\langle (0,q)\rangle,$ $ \langle(p,q)\rangle,$ $ \langle(2p,q)\rangle, \ldots,$ $ \langle((p-1)p,q)\rangle,\langle(p,0)\rangle$; order $q$ is $\langle (0,p)\rangle$, and the  order $1$ is $\langle(0,0)\rangle$. Let  $\Omega_{p^2q} =\{ T_{\langle (1,1)\rangle},$ $ T_{\langle(2,1)\rangle},$ $ \ldots, T_{\langle((p-1),1)\rangle},T_{\langle(1,p)\rangle}\}$; $\Omega_{p^2}=\{ T_{\langle (1,q)\rangle},$ $ T_{\langle(2,q)\rangle},$ $ \ldots, T_{\langle((p-1),q)\rangle},$ $T_{\langle(1,0)\rangle}\}$; $\Omega_{pq}=\{ T_{\langle (0,1)\rangle},$ $ T_{\langle(p,1)\rangle},$ $ T_{\langle(2p,1)\rangle}, \ldots,$ $T_{ \langle((p-1)p,1)\rangle},$ $T_{\langle(p,p)\rangle}\}$; $\Omega_{p}= \{T_{\langle (0,q)\rangle},$ $ T_{\langle(p,q)\rangle},$ $ T_{\langle(2p,q)\rangle}, \ldots,$ $ T_{\langle((p-1)p,q)\rangle},T_{\langle(p,0)\rangle}\}$; $\Omega_q=\{ T_{\langle (0,p)\rangle}\}$ and $\Omega_1=\{T_{ \langle(0,0)\rangle}\}$ .
	Each element of $\Omega_{p^2q}$ is adjacent to 
	 $T_{\langle(p,p)\rangle} \in \Omega_{pq}$ and $T_{\langle(p,0)\rangle} \in \Omega_{p}$ .
	Each element of  $\Omega_{p^2q}$ is adjacent to distinct element of  $\Omega_{p^2}$ and each element of  $\Omega_{pq}$ is adjacent to distinct element of  $\Omega_{p}$   because every cyclic group has a unique subgroup of particular order.  It is to be noted that the rows and columns of  $A(\mathscr{P}(G))$  are indexed in order by the elements of $\Omega_{p^2q}$, elements of $\Omega_{p^2}$, elements of $\Omega_{pq}$, elements of $\Omega_{p}$, elements of $\Omega_{q}$, and element of $\Omega_{1}$. So, $A(\mathscr{P}(G))$ is given by

	\setcounter{MaxMatrixCols}{20}
	
	\newcommand{\hugematrix}{$\left[\begin{smallmatrix*}[r]
		I_{p\times p} \bigotimes (J-I)_{\phi(p^2q)\times \phi(p^2q)}& M_{p,p} ^T \bigotimes J_{\phi(p^2q)\times \phi(p^2)}& E_{(p+1)\times p}^T \bigotimes J_{\phi(p^2q)\times \phi(pq)} &  E_{(p+1)\times p}^T \bigotimes J_{\phi(p^2q)\times \phi(p)}& J_{p\times 1} \bigotimes J_{\phi(p^2q)\times \phi(q)}& J_{p\times 1} \bigotimes J_{\phi(p^2q)\times 1}\\
			M_{p\times p} \bigotimes J_{\phi(p^2)\times \phi(p^2q)} & I_{p\times p} \bigotimes (J-I)_{\phi(p^2)\times \phi(p^2)}& O_{p\phi(p^2)\times (p+1)\phi(pq)}& E_{(p+1)\times p}^T \bigotimes J_{\phi(p^2)\times \phi(p)}& O_{p\phi(p^2)\times 1} & J_{p\times 1} \bigotimes J_{\phi(p^2)\times 1}\\
			E_{(p+1)\times p} \bigotimes J_{\phi(pq)\times \phi(p^2q)}& O_{(p+1)\phi(pq)\times p\phi(p^2)}& I_{(p+1)\times (p+1)} \bigotimes (J-I)_{\phi(pq)\times \phi(pq)}& N_{(p+1)\times (p+1)} ^T \bigotimes J_{\phi(pq)\times \phi(p)}& J_{(p+1)\times 1} \bigotimes J_{\phi(pq)\times \phi(q)}& J_{(p+1)\times 1} \bigotimes J_{\phi(pq)\times 1}\\
			E_{(p+1)\times p} \bigotimes J_{\phi(p)\times \phi(p^2q)}& 	E_{(p+1)\times p} \bigotimes J_{\phi(p)\times \phi(p^2)}& N_{(p+1)\times (p+1)} \bigotimes J_{\phi(p)\times \phi(pq)}& I_{(p+1)\times (p+1)} \bigotimes (J-I)_{\phi(p)\times \phi(p)}& O_{p\phi(p)\times \phi(q)}& J_{p\times 1} \bigotimes J_{\phi(p)\times 1}\\
			J_{1\times p} \bigotimes J_{\phi(q)\times \phi(p^2q)}& O_{1\times p phi(p^2)}& J_{1 \times (p+1)} \bigotimes J_{\phi(q)\times \phi(pq)}& O_{1\times (p+1)phi(p)}& (J-I)_{\phi(q)\times \phi(q)}& J_{\phi(q)\times 1}\\
			J_{1 \times p\phi(p^2q)}& J_{1\times p \phi(p^2)}& J_{1\times (p+1)\phi(pq)}& J_{1\times p\phi(p)}& J_{1 \times\phi(q)}& 0			 
		\end{smallmatrix*}\right]$}

	\begin{multline*}
		  \clipbox*{0pt {-1.1\depth} {.72\width} {1.1\height}}{\hugematrix}\\ 
		\clipbox*{{.72\width} {-1.1\depth} {\width} {1.1\height}}{\hugematrix}
	\end{multline*}
	
where $M,N$ are permutations matrices.


By Lemma \ref{lema2.5}  and Theorem \ref{theorem5.1}, we have 

\begin{equation}\label{eq5.3}
	\psi(A(\mathscr{P}(G),x)= \psi(Q,x) (1+x)^{[p(\phi(p^2q)+\phi(p^2)-2)+(p+1)(\phi(pq)+\phi(p)-2)+\phi(q)-1]}
\end{equation}
where $\psi(Q,x)$ is
\begin{equation*}
	\scriptsize \begin{vmatrix}
		  	(\phi(p^2q)-1-x)I_{p\times p}& \phi(p^2)M_{p\times p} ^T& \phi(pq)E_{(p+1)\times p} ^T& \phi(p)E_{(p+1)\times p} ^T& \phi(q)J_{p\times 1}& J_{p\times 1}\\
		  \phi(p^2q)M_{p\times p}& (\phi(p^2)-1-x)I_{p\times p}& O_{p\times (p+1)}& \phi(p)E_{(p+1) \times p} ^T& O_{p\times 1}&J_{p \times 1}\\
		  \phi(p^2q)E_{(p+1) \times p}& O_{(p+1)\times p}& (\phi(pq)-1-x)I_{(p+1)\times (p+1)}& \phi(p)N_{(p+1)\times (p+1)} ^T& \phi(q)J_{(p+1) \times 1}& J_{(p+1) \times 1}\\
		  \phi(p^2q)E_{(p+1) \times p}& \phi(p^2)E_{(p+1)\times p}& \phi(pq)N_{(p+1) \times (p+1)}& (\phi(p)-1-x)I_{(p+1)\times (p+1)}& O_{1\times (p+1)}&J_{(p+1)\times 1}\\
		  \phi(p^2q)J_{1\times p}& O_{1\times p}& \phi(pq)J_{1\times (p+1)}&O_{1\times p+1}&\phi(q)-1-x&1\\
		  \phi(p^2q)J_{1\times p}& \phi(p^2)J_{1\times p}& \phi(pq)J_{1\times (p+1)}& \phi(p)J_{1\times (p+1)}&1&-x
	\end{vmatrix}
\end{equation*}
After  suitable  interchanging of rows and columns simultaneously (no effect on sign of determinant), we can convert $M,N$ to  identity matrices and this will not effect other blocks.  Then

\begin{equation*}
\psi(Q,x)=	\tiny \begin{vmatrix}
		(\phi(p^2q)-1-x)I_{p \times p}& \phi(p^2)I_{p\times p}& \phi(pq)E_{(p+1)\times p} ^T& \phi(p)E_{(p+1) \times p} ^T& \phi(q)J_{p\times 1}& J_{p\times 1}\\
		\phi(p^2q)I_{p\times p}& (\phi(p^2)-1-x)I_{p\times p}& O_{p\times (p+1)}& \phi(p)E_{(p+1)\times p} ^T& O_{p\times 1}&J_{p\times 1}\\
		\phi(p^2q)E_{(p+1)\times p}& O_{(p+1)\times p}& (\phi(pq)-1-x)I_{(p+1)\times (p+1)}& \phi(p)I_{(p+1)\times (p+1)}& \phi(q)J_{(p+1)\times 1}& J_{(p+1)\times 1}\\
		\phi(p^2q)E_{(p+1)\times p}& \phi(p^2)E_{(p+1)\times p}& \phi(pq)I_{(p+1)\times (p+1)}& (\phi(p)-1-x)I_{(p+1)\times (p+1)}& O_{1,(p+1)}&J_{(p+1)\times 1}\\
		\phi(p^2q)J_{1\times p}& O_{1\times p}& \phi(pq)J_{1\times (p+1)}&O_{1\times (p+1)}&\phi(q)-1-x&1\\
		\phi(p^2q)J_{1\times p}& \phi(p^2)J_{1\times p}& \phi(pq)J_{1\times (p+1)}& \phi(p)J_{1\times (p+1)}&1&-x
	\end{vmatrix}
\end{equation*}

Taking common $\phi(p^2q), \phi(p^2), \phi(pq),\phi(p)$ and $\phi(q)$ from the first $p$ columns, the next $p$ columns, the next $(p+1)$ columns, the next $(p+1)$ columns and  the last second column respectively.  Let $ a= \frac{\phi(p^2q)-1-x}{\phi(p^2q)}$, $b =\frac{\phi(p^2)-1-x}{\phi(p^2)} $, $c=\frac{\phi(pq)-1-x}{\phi(pq)}$, $d= \frac{\phi(p)-1-x}{\phi(p)}$, $e=\frac{\phi(q)-1-x}{\phi(q)}$ and $f=-x$.
Then, we get
\begin{equation*}
\psi(Q,x)=	\scriptsize \phi(p^2q)^{p}\phi(p^2)^{p}\phi(pq)^{p+1} \phi(p)^{p+1}\phi(q) \begin{vmatrix}
		aI_{p\times p}& I_{p\times p}&E_{(p+1) \times p} ^T& E_{(p+1)\times p} ^T& J_{p\times 1}& J_{p\times 1}\\
		I_{p\times p}&bI_{p\times p}& O_{p\times (p+1)}& E_{(p+1)\times p} ^T& O_{p\times 1}&J_{p\times 1}\\
		E_{(p+1)\times p}& O_{(p+1)\times p}& cI_{(p+1)\times p+1}& I_{(p+1)\times p+1}& J_{(p+1)\times 1}& J_{(p+1)\times 1}\\
		E_{(p+1)\times p}& E_{(p+1)\times p}& I_{(p+1)\times (p+1)}& dI_{(p+1)\times (p+1)}& O_{(p+1)\times 1}&J_{(p+1)\times 1}\\
		J_{1\times p}& O_{1\times p}& J_{1\times (p+1)}&O_{1\times (p+1)}&e&1\\
		J_{1\times p}& J_{1\times p}& J_{1\times (p+1)}& J_{1\times (p+1)}&1&f
	\end{vmatrix}
\end{equation*}
Suppose $w= \phi(p^2q)^{p}\phi(p^2)^{p}\phi(pq)^{p+1} \phi(p)^{p+1}\phi(q)$.
Apply the row operations $R_{p+i} \to R_{p+i}-bR_{i}$ for $1 \leq i \leq p$, $R_{4p+2} \to R_{4p+2} -(R_1+R_2+ \ldots+R_{p})$ and $R_{4p+4} \to R_{4p+4} -(R_1+R_2+ \ldots R_p)$. We get
\begin{equation*}
\psi(Q,x)=	w \scriptsize \begin{vmatrix}
 	aI_{p \times p}& I_{p\times p}&E_{(p+1)\times p} ^T& E_{(p+1)\times p} ^T& J_{p\times 1}& J_{p\times 1}\\
 	(1-ab)I_{p\times p}&O_{p\times p}& -bE_{(p+1)\times p} ^T& (1-b)E_{(p+1)\times p} ^T& -bJ_{p\times 1}&(1-b)J_{p\times1}\\
 	E_{(p+1)\times p}& O_{(p+1)\times p}& cI_{(p+1)\times (p+1)}& I_{(p+1)\times (p+1)}& J_{(p+1)\times 1}& J_{(p+1)\times 1}\\
 	(1-a)E_{(p+1)\times p}& O_{(p+1)\times p}& (I-pL)_{(p+1)\times (p+1)}& (dI-pL)_{(p+1)\times (p+1)}& -pL_{(p+1)\times 1}&(J-pL)_{(p+1)\times 1}\\
 	J_{1\times p}& O_{1\times p}& J_{1\times(p+1)}&O_{1\times (p+1)}&e&1\\
 	(1-a)J_{1\times p}& O_{1\times p}& (J-pL)_{1\times (p+1)}& (J-pL)_{1\times (p+1)}&1-p&f-p
 \end{vmatrix}
\end{equation*}
Expanding the determinant along the  columns $C_{p+1}, C_{p+2},\ldots, C_{2p}$, we get
\begin{equation*}
	\psi(Q,x)=	w \scriptsize \begin{vmatrix}		
		(1-ab)I_{p\times p}& -bE_{(p+1) \times p} ^T& (1-b)E_{(p+1) \times p} ^T& -bJ_{p\times 1}&(1-b)J_{p\times 1}\\
		E_{(p+1)\times p}& cI_{(p+1)\times (p+1)}& I_{(p+1)\times (p+1)}& J_{(p+1)\times 1}& J_{(p+1)\times 1}\\
		(1-a)E_{(p+1) \times p}& (I-pL)_{(p+1)\times (p+1)}& (dI-pL)_{(p+1)\times (p+1)}& -pL_{(p+1)\times 1}&(J-pL)_{(p+1)\times 1}\\
		J_{1\times p}& J_{1\times (p+1)}&O_{1\times(p+1)}&e&1\\
		(1-a)J_{1\times p}& (J-pL)_{1\times (p+1)}& (J-pL)_{1\times (p+1)}&1-p&f-p
	\end{vmatrix}
\end{equation*}
Applying the column operations $C_{p+i} \to C_{p+i} -cC_{2p+1+i}$ for $1 \leq i \leq p$, $C_{3p+3} \to C_{3p+3}- (C_{2p+2}+ C_{2p+3}+\ldots+ C_{3p+1})$ and $C_{3p+4} \to C_{3p+4}- (C_{2p+2}+ C_{2p+3}+\ldots+ C_{3p+1})$, the determinant is

\begin{equation*}
		w \scriptsize \begin{vmatrix}		
		(1-ab)I_{p \times p}& -bE_{(p+1)\times p} ^T& (1-b)E_{(p+1)\times p} ^T& -bJ_{p\times 1}&(1-b)J_{p\times 1}\\
		E_{(p+1)\times p}& cL_{(p+1)\times (p+1)}& I_{(p+1)\times (p+1)}& L_{(p+1)\times 1}& L_{(p+1)\times 1}\\
		(1-a)E_{(p+1)\times p}& ((1-cd)I-(p-cd)L)_{(p+1)\times (p+1)}& (dI-pL)_{(p+1)\times (p+1)}& (-dJ+(d-p)L)_{(p+1)\times 1}&((1-d)J+(d-p)L)_{(p+1)\times 1}\\
		J_{1\times p}& J_{1\times (p+1)}&O_{1\times (p+1)}&e&1\\
		(1-a)J_{1\times p}& ((1-c)J-(p-c)L)_{1\times (p+1)}& (J-pL)_{1\times (p+1)}&1-2p&f-2p
	\end{vmatrix}
\end{equation*}
Expanding along the rows $R_{p+1},R_{p+2},\ldots,R_{2p}$,  $\psi(Q,x)$ is 
\begin{equation*}
		w \scriptsize \begin{vmatrix}		
		(1-ab)I_{p \times p}& -bE_{(p+1) \times p} ^T& (1-b)J_{p\times 1} ^T& -bJ_{p\times 1}&(1-b)J_{p\times 1}\\
		J_{1\times p}& cL_{1\times (p+1)}& 1& 1&1\\
		(1-a)E_{(p+1)\times p}& ((1-cd)I-(p-cd)L)_{(p+1)\times (p+1)}& (d-p)L_{(p+1)\times 1}& (-dJ+(d-p)L)_{(p+1)\times 1}&((1-d)J+(d-p)L)_{(p+1)\times 1}\\
		J_{1\times p}& J_{1\times (p+1)}&0&e&1\\
		(1-a)J_{1\times p}& ((1-c)J-(p-c)L)_{1\times (p+1)}& (1-p)&1-2p&f-2p
	\end{vmatrix}
\end{equation*}
Apply the column operations $C_i \to C_i -C_p$ for $1 \leq i \leq p-1$ and $C_j \to C_j- C_{2p}$ for $p+1 \leq j \leq 2p-1$, we get
\begin{equation*}
	\psi(Q,x)=	w \scriptsize \begin{vmatrix}		
		(1-ab)(I-E+L)_{p\times p}&O_{p\times p}& -bJ_{p\times 1}& (1-b)J_{p\times 1}& -bJ_{p\times 1}&(1-b)J_{p\times 1}\\
		L_{1\times p}& O_{1\times p}&c&1& 1&1\\
		O_{p\times p}&(1-cd)(I-E+L)_{p\times p}&O_{p\times 1}&-dJ_{p\times 1}& (1-d)J_{p\times 1}\\
		(1-a)L_{1\times p}& O_{1\times p}&(1-p)&d-p&-p&1-p\\
		L_{1\times p}& L_{1\times p}&1&0&e&1\\
		(1-a)L_{1\times p}& L_{1\times p}& 1-p&1-p&1-2p&f-2p
	\end{vmatrix}
\end{equation*}
Taking common $(1-ab)$ from the first $(p-1)$ columns and $(1-cd)$ from $C_{p+1},C_{p+2},\ldots, C_{2p-1}$, then  $\psi(Q,x)$ is
\begin{equation*}
	w(1-ab)^{p-1}(1-cd)^{p-1}\scriptsize
	\begin{vmatrix}
		(I-E+(1-ab)L)_{p\times p}&O_{p\times p}&-bJ_{p\times 1}&(1-b)J_{p\times 1}&-bJ_{p\times 1}&(1-b)J_{p\times 1}\\
		L_{1\times p}& O_{1\times p}&c&1&1&1\\
		O_{p\times p}&(I-E+(1-cd)L)_{p\times p}&O_{p\times 1}&O_{p\times 1}&-dJ_{p\times 1}&(1-d)J_{p\times 1}\\
		(1-a)L_{1\times p}& O_{1\times p}&1-p&d-p&-p&1-p\\
		L_{1\times p}&L_{1\times p}&1& 0& e&1\\
		(1-a)L_{1\times p}&(1-c)L_{1\times p}& 1-p& 1-p& 1-2p&f-2p
	\end{vmatrix}
\end{equation*}
Apply the column operations  $C_p \to C_p -(C_1+C_2+\ldots+C_{p-1})$, $C_{2p+1} \to C_{2p+1} -(C_{p+2}+C_{p+3}+\ldots+C_{2p})$,  $\psi(Q,x)$ is
\begin{equation*} 
	 w(1-ab)^{p-1}(1-cd)^{p-1}\scriptsize
	\begin{vmatrix}
		(I-abL)_{p \times p}&O_{p \times p}&-bJ_{p \times 1}&(1-b)J_{p\times 1}&-bJ_{p \times1}&(1-b)J_{p \times1}\\
		L_{1 \times p}& O_{1 \times p}&c&1&1&1\\
		O_{p \times p}&(I-cdL)_{p \times p}&O_{p \times 1}&O_{p\times 1}&-dJ_{p\times 1}&(1-d)J_{p\times 1}\\
		(1-a)L_{1\times p}& O_{1\times p}&1-p&d-p&-p&1-p\\
		L_{1\times p}&L_{1\times p}&1& 0& e&1\\
		(1-a)L_{1\times p}&(1-c)L_{1\times p}& 1-p& 1-p& 1-2p&f-2p
	\end{vmatrix}
\end{equation*}
Expand along the columns $C_1,C_2,\ldots,C_{p-1}$ and $C_{p+1},C_{p+2},\ldots,C_{2p-1}$, we get
\begin{equation*}
	\psi(Q,x)=  w(1-ab)^{p-1}(1-cd)^{p-1} 
\begin{vmatrix}
1-ab& 0&-b&1-b&-b&1-b\\
1&0&c&1&1&1\\
0&1-cd&0&0&-d&1-d\\
1-a&0&1-p&d-p&-p&1-p\\
1&1&1&0&e&1\\
1-a&1-c&1-p&1-p&1-2p&f-2p
\end{vmatrix}
\end{equation*}
The factor $(1-ab)^{p-1}= \big(\frac{(1+x)(p^2q-pq-1-x)}{\phi(p^2q)\phi(p^2)}\big)^{p-1}$ gives the  eigenvalues $ -1$ and $(p^2q-pq-1-x)$ with both multiplicities $p-1$, and  $(1-cd)^{p-1}= \big(\frac{(1+x)(pq-q-1-x)}{\phi(p^2)\phi(p)}\big)^{p-1}$ gives the  eigenvalues $ -1$ and $(pq-q-1-x)$ with both multiplicities $p-1$.
By putting value of $w$,  $\psi(Q,x)$ is
\begin{equation*}
\phi(p^2q)\phi(p^2)\phi(q){\phi(pq)}^2 {\phi(p)}^2 (1+x)^{2p-2}(p^2q-pq-1-x)^{p-1} (pq-q-1-x)^{p-1}det(C) 
\end{equation*}
where  $det(C)$ as given in Equation (\ref{eq6}). Finally by Equation (\ref{eq5.3}), we get multiplicity of $-1$ is $p^3q-2p-6$.
\end{proof}

\end{document}